\definecolor{darkblue}{rgb}{0.0, 0.0, 0.55}
\definecolor{bordeaux}{rgb}{0.34, 0.01, 0.1}
\newtheorem{theorem}{Theorem}[section]
\newtheorem{example}[theorem]{Example}
\newtheorem{remark}[theorem]{Remark}
\def\R{{\mathbb{R}}}
\def\N{{\mathbb{N}}}
\def\BB{{\mathbb{B}}}
\def\x{{\mathbf{x}}}
\def\y{{\mathbf{y}}}
\def\bu{{\mathbf{u}}}
\def\W{{\mathbf{W}}}
\def\b{{\boldsymbol{\beta}}}
\def\g{{\boldsymbol{\gamma}}}
\def\bv{{\boldsymbol{v}}}
\def\A{{\mathbf{A}}}
\def\B{{\mathbf{B}}}
\def\S{{\mathbb{S}}}
\def\M{{\mathcal{M}}}
\def\D{{\mathcal{D}}}
\def\CC{{\mathscr{C}}}
\def\DD{{\mathscr{D}}}
\def\supp{\hbox{\rm{supp}}}
\def\var{\hbox{\rm{var}}}
\def\tr{\hbox{\rm{tr}}}
\def\Tr{\hbox{\rm{Tr}}}
\def\int{\hbox{\rm{int}}}
\def\SE{\hbox{\rm{SE}}}
\def\CSE{\hbox{\rm{CSE}}}
\def\Sym{\hbox{\rm{Sym}}}
\def\cdeg{\hbox{\rm{cdeg}}}
\DeclareMathOperator{\II}{II}
\newcommand{\vge}{\mathbin{\rotatebox[origin=c]{90}{$\ge$}}}
\newif\ifcomment
\begin{document}

\title{Exploiting term sparsity in Noncommutative Polynomial Optimization}
\author{Jie Wang \and Victor Magron}
\subjclass[2010]{Primary, 47N10,90C22; Secondary, 12D15,14P10}
\keywords{term sparsity, noncommutative polynomial optimization, semidefinite relaxations, sum of hermitian squares}
\date{\today}

\begin{abstract}
We provide a new hierarchy of semidefinite programming relaxations, called \emph{NCTSSOS}, to solve large-scale sparse noncommutative polynomial optimization problems. 
This hierarchy features the exploitation of \emph{term sparsity} hidden in the input data for eigenvalue and trace optimization problems. 
NCTSSOS complements the recent work that exploits \emph{correlative sparsity} for noncommutative optimization problems by Klep, Magron and Povh \cite{klep2019sparse}, and is the noncommutative analogue of the TSSOS framework by Wang, Magron and Lasserre \cite{wang2,wang3}.
We also propose an extension exploiting simultaneously correlative and term sparsity, as done previously in the commutative case \cite{wang4}.
Under certain conditions, we prove that the optimums of the NCTSSOS hierarchy converge to the optimum of the corresponding dense SDP relaxation.
We illustrate the efficiency and scalability of NCTSSOS by solving eigenvalue/trace optimization problems from the literature as well as randomly generated examples involving up to several thousands of variables.
\end{abstract}

\maketitle

\section{Introduction}

A polynomial optimization problem (POP) consists of minimizing a polynomial over a basic closed \emph{semialgebraic set}, namely an intersection of finitely many polynomial level sets.
Even if solving a POP is NP-hard in general \cite{Laurent:Survey}, one can rely on the so-called ``moment-sums of squares (SOS) hierarchy'', also referred to as ``Lasserre's hierarchy'' \cite{Las01} to compute a sequence of lower bounds for the POP. 
Each lower bound in the sequence is obtained by solving a semidefinite program (SDP) \cite{anjos2011handbook}.
Thanks to Putinar's Positivstellensatz \cite{Putinar1993positive}, if the quadratic module generated by the polynomials describing the semialgebraic set is \emph{Archimedean}, the sequence of these SDP lower bounds converges from below to the global optimal value of the POP.

Although most POPs involve commuting variables, we are also interested in noncommutative POPs (NCPOPs), i.e., POPs involving noncommuting variables (e.g. matrices, operators on a Hilbert space). The applications of NCPOPs include control theory and linear systems in engineering \cite{skelton1997}, quantum theory and quantum information science \cite{gribling2018,pal2009,marecek2020}, matrix factorization ranks \cite{gribling2019}, machine learning \cite{zhou2020proper,zhou2020fairness} and so on, and new applications are emerging.

The noncommutative (nc) analogue of Lasserre's hierarchy \cite{cafuta2012constrained,Helton04,navascues2008convergent,pironio2010convergent}, often called the ``Navascu\'es-Pironio-Ac\'in (NPA) hierarchy'', or ``moment-sums of hermitian squares (SOHS) hierarchy'', allows one to compute arbitrarily close lower bounds of the minimal eigenvalue of an nc polynomial over an nc semialgebraic set.
In the same spirit, one can also obtain a hierarchy of SDP relaxations to approximate as closely as desired the minimal trace of an nc polynomial over an nc semialgebraic set \cite{nctrace,cafuta2012constrained,pironio2010convergent}.
We also refer the interested reader to \cite{klep2020} for the case of more general trace polynomials, i.e., polynomials in noncommutating variables and traces of their products.

From the view of applications, the common bottleneck of the Lasserre/NPA hierarchy is that the involved sequence of SDP relaxations becomes intractable very quickly as the number of variables $n$ or the relaxation order $d$ increases. In the commutative setting, the matrices involved at relaxation order $d$ is of size $\binom{n+d}{d}$; in the nc setting, the size of matrices involved at relaxation order $d$ is even larger. It is already hard to solve such a SDP for $n\le30$ and $d\ge2$ on a modern standard laptop (at least when one relies on interior-point solvers).

~

\paragraph{\textbf{Remedies by exploiting sparsity for (NC)POP}}

In certain situations, the SDP relaxations arising from POPs can be solved with adequate first-order methods rather than with costly interior-point algorithms; see, e.g., \cite{mai2020hierarchy,yurtsever2019scalable}, where the authors exploit the constant trace property of the matrices in SDP relaxations of combinatorial optimization problems or quadratically constrained quadratic programs.
In any case, it is worth finding remedies in view of the sparsity of (NC)POPs to prevent from the computational blow-up of the Lasserre/NPA hierarchy, by decreasing the sizes of the matrices involved in the SDP relaxations. 

The first remedy is to partition the input variables into cliques when the polynomials involved in the objective function and the constraints fulfill a so-called \textit{correlative sparsity pattern}.
The resulting moment-SOS hierarchy is obtained by assembling sparse SDP matrices in terms of these cliques of variables \cite{waki,waki2008algorithm}. Under certain conditions, the lower bounds given by this hierarchy still converge to the global optimum of the original problem \cite{Las06}. When the sizes of these cliques of variables are small enough (e.g., less than $10$ in \cite{waki} or less than $20$ in \cite{josz2018lasserre}), one can significantly improve the scalability of Lasserre's hierarchy to handle problems with a large number of variables. For instance, by exploiting correlative sparsity, one can compute roundoff error bounds \cite{toms17,toms18} with up to hundred variables, and solve optimal power flow problems \cite{josz2018lasserre} or deep learning problems \cite{chen2020polynomial} with up to thousands of variables. 
Several extensions have been investigated, including volume computation of sparse semialgebraic sets \cite{tacchi2019exploiting}, or minimization of rational functions \cite{bugarin2016minimizing,mai2020sparse}. 
Recently, Klep, the second author and Povh designed an nc analogue of this sparsity adapted hierarchy for both eigenvalue and trace minimization problems \cite{klep2019sparse}. Nevertheless, when the  sizes of variable cliques provided by correlative sparsity are relatively big (say $\ge20$ in the commutative setting and $\ge10$ in the nc setting), or when the correlative sparsity pattern is even fully dense, one might face again the same issue of untractable SDPs.


Another complementary remedy consists of exploiting \emph{term sparsity}. 
For unconstrained problems, it means that the objective function involves few terms (monomials or words).
One can then reduce the size of the associated SDP matrix by computing a smaller monomial basis via the \emph{Newton polytope method} \cite{re2}.
The nc analogue for this method is the \emph{Newton chip method}~\cite[\textsection2.3]{burgdorf16} in the context of eigenvalue optimization and the \emph{tracial Newton polytope}~\cite[\textsection3.3]{burgdorf16} in the context of trace optimization.
%

Besides obtaining a smaller monomial basis, in both unconstrained and constrained case, one can rely on a term-sparsity adapted moment-SOS hierarchy (called TSSOS), following the line of research recently pursued by the two authors, Lasserre and Mai in \cite{wang,wang2,wang3}.
The core idea of TSSOS is partitioning the monomial bases used to construct SDP relaxations into blocks in view of the correlations between monomials and then building SDP matrices to comply with this block structure. More precisely, one first define the {\em term sparsity pattern (tsp) graph} associated with a POP whose nodes are monomials from the monomial basis. Two nodes of the tsp graph are connected via an edge if and only if the product of the corresponding monomials belongs to the support of the polynomials involved in the POP or is a monomial square.
TSSOS is based on an iterative procedure, whose input is the tsp graph of the POP. Each iteration consists of two steps: first one performs a support-extension operation on the graph and successively one performs a chordal-extension operation on the graph (``maximal'' chordal extensions are used in \cite{wang2} while approximately minimal chordal extensions are used in \cite{wang3}). At each iteration, one can construct a SDP relaxation with matrices of sparsity pattern represented by the corresponding graph.
In doing so, TSSOS provides us with a two-level moment-SOS hierarchy involving sparse SDP matrices. TSSOS can be further combined with correlative sparsity, which allows one to solve large-scale POPs with several thousands of variables and constraints \cite{wang4}. Apart from (commutative) POPs, the idea of TSSOS can be also used to develop more efficient SOS-based algorithms for other problems, e.g. the approximation of joint spectral radius \cite{wang2020sparsejsr}. 
%

~

\paragraph{\textbf{Contributions}}
Motivated by the performance of TSSOS for commutative POPs, we develop an nc analogue of the TSSOS framework (called NCTSSOS) in this paper, to handle large-scale eigenvalue/trace optimization problems with sparse input data.

First, we extend in Section~\ref{sec3} the notion of term sparsity pattern to unconstrained eigenvalue optimization problems. We show how to build the tsp graph and derive a two-step iterative procedure to enlarge the graph via the support-extension operation and the chordal-extension operation. Based on this, we then give the NCTSSOS hierarchy.
The generalization to constrained eigenvalue optimization problems is provided in Section~\ref{sec3-con}. Under certain conditions, we prove that the optimums of the NCTSSOS hierarchy converge to the optimum of the dense relaxation.
In Section~\ref{eo-cts}, we show how to benefit simultaneously from both correlative and term sparsity, to obtain an nc variant of the so-called ``CS-TSSOS'' hierarchy \cite{wang4}. 
Section~\ref{sec5} is dedicated to trace optimization.
For both unconstrained and constrained trace optimization problems, we provide a term sparsity (and combined with correlative sparsity) adapted hierarchy of SDP relaxations.
In Section~\ref{sec:benchs}, we demonstrate the computational efficiency, scalability and accuracy of the NCTSSOS hierarchy by various numerical examples involving up to several thousands of variables.

The algorithmic framework of the NCTSSOS hierarchy has been released as an open-source Julia \cite{bezanson2017julia} library, also called \texttt{NCTSSOS},
which is available online and comes together with a documentation.\footnote{\url{https://github.com/wangjie212/NCTSSOS}}

Our term sparsity (and combined with correlative sparsity) adapted moment-SOHS hierarchies appear in a similar manner as the ones obtained for the commutative case \cite{wang2,wang3,wang4}. 
We believe that it is of interest for researchers using noncommutative optimization tools to have a self-contained paper stating explicitly the construction of the tsp graphs, support/chordal-extension operations, as well as the different term sparsity (and combined with correlative sparsity) adapted SDP formulations, either for eigenvalue or trace optimization.
While the overall strategy to obtain tsp graphs for eigenvalue optimization is very similar to the commutative case, it is less straightforward for trace optimization, where it is mandatory to introduce the cyclic analog of the tsp graph and the support-extension operation. 
Furthermore, we would like to emphasize that the main contribution of this paper is to show a significant quantitative improvement with respect to the previous results obtained for various nc eigenvalue/trace optimization problems. We hope that these results will convince researchers in related fields, including quantum information physicists relying on the NPA hierarchy, about the potential impact that NCTSSOS could have on solving their problems more efficiently.

\section{Notation and Preliminaries}
In this section, we recall some notations, definitions and basic results that will be used in the rest of this paper.
\subsection{Noncommutative polynomials}
For a positive integer $r$, let us denote by $\S^r$ (resp. $\S_+^r$, $\S_{++}^r$) the
space of all symmetric (resp. positive semidefinite (PSD), positive definite) matrices of size $r$, and by $(\S^r)^n$
the set of $n$-tuples $\underline{A} = (A_1, \ldots, A_n)$ of symmetric matrices $A_i$ of size $r$. For matrices $A,B\in\S^r$ (resp. vectors $\bu,\bv\in\R^r$), let $\langle A, B\rangle\in\R$ (resp. $\langle\bu, \bv\rangle\in\R$) be the trace inner-product, defined by $\langle A, B\rangle=\Tr(A^TB)$ (resp. $\langle\bu,\bv\rangle=\bu^T\bv$) and let $A\circ B\in\S^r$ denote the Hadamard, or entrywise, product of $A$ and $B$, defined by $[A\circ B]_{ij} = A_{ij}B_{ij}$.
For a fixed $n\in\N\backslash\{0\}$, let $\underline{X}=(X_1, \ldots, X_n)$ be a tuple of letters and consider the set of all possible words of finite length in $\underline{X}$ which is denoted by $\langle\underline{X}\rangle$. The empty word is
denoted by $1$. We denote by $\R\langle\underline{X}\rangle$ the ring of real polynomials in the noncommutating variables $\underline{X}$. An element $f$ in $\R\langle\underline{X}\rangle$ can be written as $f=\sum_{w\in\langle\underline{X}\rangle}a_ww$, $a_w\in\R$, which is called a {\em noncommutative polynomial} ({\em nc polynomial} for short). The {\em support} of $f$ is defined by $\supp(f):=\{w\in\langle\underline{X}\rangle\mid a_w\ne0\}$ and the {\em degree} of $f$, denoted by $\deg(f)$, is the length of the longest word in $\supp(f)$. For a given $d\in\N$, let us denote by $\W_d$ the column vector of all words of degree at most $d$ arranged w.r.t. the lexicographic order. The ring $\R\langle\underline{X}\rangle$ is equipped with the involution $\star$ that fixes $\R\cup\{X_1, \ldots, X_n\}$ point-wise and reverses words, so that $\R\langle\underline{X}\rangle$ is the $\star$-algebra freely generated by $n$ symmetric letters $X_1, \ldots, X_n$. The set of symmetric elements in $\R\langle\underline{X}\rangle$ is defined as $\Sym\,\R\langle\underline{X}\rangle:=\{f\in\R\langle\underline{X}\rangle\mid f^{\star}=f\}$. We use $|\cdot|$ to denote the cardinal of a set and let $[m]:=\{1,2,\ldots,m\}$ for $m\in\N\backslash\{0\}$. 

\subsection{Sums of hermitian squares}
An nc polynomial of the form $g^{\star}g$ is called a {\em hermitian square}. A nc polynomial $f\in\R\langle\underline{X}\rangle$ is called a {\em sum of hermitian squares (SOHS)} if there exist nc polynomials $g_1, \ldots, g_r\in\R\langle\underline{X}\rangle$ such that $f = g_1^{\star}g_1+g_2^{\star}g_2+\ldots+g_r^{\star}g_r$. The set of SOHS is denoted by $\Sigma\langle\underline{X}\rangle$. Checking whether a given
nc polynomial $f\in\Sym\,\R\langle\underline{X}\rangle$ is an SOHS can be cast as a semidefinite program (SDP) due to the following theorem.
\begin{theorem}[\cite{helton}, Theorem 1.1]
Let $f\in\Sym\,\R\langle\underline{X}\rangle$ with $\deg(f)=2d$. Then $f\in\Sigma\langle\underline{X}\rangle$ if and only if there exists a matrix $Q\succeq0$ satisfying
\begin{equation}\label{sec2-eq1}
f=\W_d^{\star}Q\W_d.
\end{equation}
\end{theorem}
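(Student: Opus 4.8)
The plan is to establish the Gram-matrix characterization by proving the two implications separately, both of which hinge on linear algebra over the coefficient vector space indexed by words in $\langle\underline{X}\rangle$. Throughout I would identify an nc polynomial $h$ of degree at most $d$ with its coefficient vector, so that $h = \W_d^{\star}\bv_h$ for a uniquely determined $\bv_h\in\R^{|\W_d|}$ (here $|\W_d| = \sum_{k=0}^{d} n^k$). The only subtlety is to pin down exactly which words can appear: if $\deg(f)=2d$ and $f\in\Sigma\langle\underline{X}\rangle$, one must check that each $g_i$ in a representation $f=\sum_i g_i^{\star}g_i$ has $\deg(g_i)\le d$. This is the nc analogue of the classical Newton-polytope-type argument: in the expansion of $\sum_i g_i^{\star}g_i$, if some $g_i$ contained a word $w$ of maximal length $\ell > d$, the term $w^{\star}w$ (the unique longest-word contribution to that diagonal summand) cannot be cancelled by any other contribution, forcing $2\ell\le 2d$. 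I expect this cancellation bookkeeping to be the main obstacle, since it requires care about which products $u^{\star}v$ can collide — but it is exactly the content of the Newton chip method referenced in the introduction, and can be invoked or reproved in a short paragraph.

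For the direction ($\Rightarrow$), suppose $f = \sum_{i=1}^{r} g_i^{\star}g_i\in\Sigma\langle\underline{X}\rangle$. By the degree bound above, each $g_i = \W_d^{\star}\bv_i$ for some $\bv_i\in\R^{|\W_d|}$. Set $Q := \sum_{i=1}^{r}\bv_i\bv_i^{\star}$, which is manifestly positive semidefinite. Then
\[
\W_d^{\star}Q\W_d = \sum_{i=1}^{r}\W_d^{\star}\bv_i\bv_i^{\star}\W_d = \sum_{i=1}^{r}(\bv_i^{\star}\W_d)^{\star}(\bv_i^{\star}\W_d) = \sum_{i=1}^{r}g_i^{\star}g_i = f,
\]
using that $\bv_i^{\star}\W_d = (\W_d^{\star}\bv_i)^{\star} = g_i^{\star}$ under the involution, and that $\star$ is an algebra anti-homomorphism fixing scalars. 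This gives \eqref{sec2-eq1}.

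For the direction ($\Leftarrow$), suppose $f = \W_d^{\star}Q\W_d$ with $Q\succeq0$. Diagonalize or take a Cholesky-type factorization $Q = \sum_{i=1}^{r}\bv_i\bv_i^{\star}$ with $r=\rank(Q)$ (e.g., $Q = LL^{\star}$ and let $\bv_i$ be the columns of $L$; such a factorization exists precisely because $Q$ is PSD). Define $g_i := \bv_i^{\star}\W_d = \W_d^{\star}\bv_i$, i.e. $g_i\in\R\langle\underline{X}\rangle$ with coefficient vector $\bv_i$. Reversing the computation above yields $f = \W_d^{\star}Q\W_d = \sum_{i=1}^{r}g_i^{\star}g_i$, so $f\in\Sigma\langle\underline{X}\rangle$. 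One should also remark that $f$ being an SOHS forces $f\in\Sym\,\R\langle\underline{X}\rangle$, consistent with the hypothesis, and that the identity $f=\W_d^{\star}Q\W_d$ is an identity of nc polynomials, to be read coefficient-wise: comparing coefficients of each word $w$ gives a system of linear equations $\sum_{u^{\star}v = w} Q_{u,v} = a_w$ relating the entries of $Q$ to the coefficients of $f$, which is the concrete form in which the SDP feasibility problem is actually solved.
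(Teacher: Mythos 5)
The paper does not prove this statement --- it is quoted verbatim from Helton's paper (\cite{helton}, Theorem~1.1) and used as a black box --- so there is no in-paper proof to compare against. Your argument is the standard one and is essentially correct: the equivalence reduces to the bijection between PSD Gram matrices $Q=\sum_i \bv_i\bv_i^T$ and SOHS decompositions $f=\sum_i g_i^\star g_i$ with $g_i$ the polynomial with coefficient vector $\bv_i$, plus the degree bound $\deg(g_i)\le d$. Your sketch of that degree bound is the right one, and the key point making it work is worth stating explicitly: for words $u,v$ of length at most $\ell$, the identity $u^\star v = w^\star w$ with $|w|=\ell$ forces $u=v=w$ in the free monoid, so the coefficient of $w^\star w$ in the degree-$2\ell$ part of $\sum_i g_i^\star g_i$ is $\sum_i c_{i,w}^2>0$, which rules out cancellation across summands as well as within one. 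Two small corrections: this degree bound is a more elementary fact than (and a prerequisite of) the Newton chip method, which is a further reduction of the basis beyond $\W_d$; and the identity you write, $\bv_i^\star\W_d=\W_d^\star\bv_i$, is false as stated --- the first equals $g_i$ and the second equals $g_i^\star$, since $\star$ reverses the words in $\W_d$ --- though this slip does not affect the displayed computation, which is correct.
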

Any symmetric matrix $Q$ (not necessarily PSD) satisfying \eqref{sec2-eq1} is called a {\em Gram matrix} of $f$. The standard monomial basis $\W_d$ used in \eqref{sec2-eq1} can be reduced via the Newton chip method; see Chapter 2 in \cite{burgdorf}.

\subsection{Semialgebraic sets and quadratic modules}
Given $S=\{g_1,\ldots,g_m\}\subseteq\Sym\,\R\langle\underline{X}\rangle$, the {\em semialgebraic set} $\D_S$ associated with $S$ is defined by
\begin{equation}
    \D_S:=\bigcup_{r\in\N\backslash\{0\}}\{\underline{A}=(A_1,\ldots,A_n)\in(\S^r)^n\mid g_j(\underline{A})\succeq0, j\in[m]\}.
\end{equation}
The {\em operator semialgebraic set} $\D_S^{\infty}$ is the set of all bounded self-adjoint operators $\underline{A}$ on a Hilbert space endowed with a scalar product $\langle\cdot,\cdot\rangle$ making $g(\underline{A})$ a PSD operator, for all $g\in S$. The {\em quadratic module} $\M_S$, generated by $S$, is defined by
\begin{equation}
    \M_S:=\{\sum_{j=1}^sh_j^{\star}g_jh_j\mid s\in\N\backslash\{0\}, h_j\in\R\langle\underline{X}\rangle, g_j\in \{1\}\cup S\},
\end{equation}
and the {\em truncated quadratic module} $\M_{S,d}$ of order $d\in\N$, generated by $S$, is
\begin{equation}
    \M_{S,d}:=\{\sum_{j=1}^sh_j^{\star}g_jh_j\mid s\in\N\backslash\{0\}, h_j\in\R\langle\underline{X}\rangle, g_j\in \{1\}\cup S,\deg(h_j^{\star}g_jh_j)\le2d\}.
\end{equation}
A quadratic module $\M$ is {\em Archimedean} if for each $h\in\R\langle\underline{X}\rangle$, there exists $N\in\N$ such that $N-h^{\star}h\in\M$. The noncommutative analog of Putinar’s Positivstellensatz describing noncommutative polynomials positive on $\D_S^{\infty}$ with Archimedean $\M_S$ is due to Helton and McCullough:
\begin{theorem}[\cite{helton2}, Theorem 1.2]
Let $\{f\}\cup S\subseteq\Sym\,\R\langle\underline{X}\rangle$ and assume that $\M_S$ is Archimedean. If $f(A)\succ0$ for all $A\in\D_S^{\infty}$, then $f\in\M_S$.
\end{theorem}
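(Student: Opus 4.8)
The plan is to argue by contradiction via a Hahn--Banach separation combined with a Gelfand--Naimark--Segal (GNS) type construction, which is the classical route for Positivstellens\"atze of this flavour. Suppose $f\notin\M_S$. Since $\M_S$ is a convex cone in the real vector space $\Sym\,\R\langle\underline{X}\rangle$ with $1\in\M_S$, the first step is to show that $1$ is an algebraic interior (core) point of $\M_S$: for any $v\in\Sym\,\R\langle\underline{X}\rangle$, write $v=q_+^{\star}q_+-q_-^{\star}q_-$ with $q_{\pm}=(v\pm 1)/2$, and use the Archimedean hypothesis to pick $N$ with $N-q_-^{\star}q_-\in\M_S$; then $\lambda+v\in\M_S$ for all $\lambda\ge N$, so $1+\epsilon v\in\M_S$ for small $\epsilon>0$, and a symmetric argument handles $1-\epsilon v$. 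With a core point in hand and $f\notin\M_S$, the topology-free version of the separation theorem for convex sets with nonempty core yields a nonzero linear functional $L$ on $\Sym\,\R\langle\underline{X}\rangle$ with $L\ge 0$ on $\M_S$ and $L(f)\le 0$; the core-point property forces $L(1)>0$ (otherwise $L$ would vanish on all of $\Sym\,\R\langle\underline{X}\rangle$), so we normalize $L(1)=1$. Extending $L$ to $\R\langle\underline{X}\rangle$ by $L(w):=L(\tfrac12(w+w^{\star}))$ makes it symmetric, i.e. $L(w^{\star})=L(w)$, while preserving $L\ge 0$ on the $\star$-invariant cone $\M_S$ and $L(f)\le 0$.

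Next I would run the GNS construction on $L$. The form $\langle p,q\rangle:=L(q^{\star}p)$ is bilinear, symmetric, and positive semidefinite since $q^{\star}q\in\M_S$; its radical $N_L=\{p:L(p^{\star}p)=0\}$ is a left ideal (by Cauchy--Schwarz), so left multiplication by each $X_i$ descends to the quotient $\R\langle\underline{X}\rangle/N_L$, which we complete to a Hilbert space $H$. The operator $\hat X_i$ induced by $X_i$ is bounded: the Archimedean hypothesis applied to $X_i$ gives $N_i$ with $N_i-X_i^2\in\M_S$, whence $\|\hat X_i\hat p\|^2=L(p^{\star}X_i^2p)\le N_i\,L(p^{\star}p)=N_i\|\hat p\|^2$, so $\hat X_i$ extends to a bounded self-adjoint operator on $H$ and $\underline{\hat X}=(\hat X_1,\dots,\hat X_n)$ is a tuple of bounded self-adjoint operators. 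Moreover $\underline{\hat X}\in\D_S^{\infty}$: for every $p\in\R\langle\underline{X}\rangle$ we have $\langle g_j(\underline{\hat X})\hat p,\hat p\rangle=L(p^{\star}g_jp)\ge 0$ because $p^{\star}g_jp\in\M_S$, and by density $g_j(\underline{\hat X})\succeq 0$ for each $j$.

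Finally, the hypothesis $f\succ 0$ on $\D_S^{\infty}$ applied to $\underline{\hat X}$ gives $f(\underline{\hat X})\succeq\epsilon I$ for some $\epsilon>0$, so evaluating against the cyclic vector $\hat 1$ yields $L(f)=\langle f(\underline{\hat X})\hat 1,\hat 1\rangle\ge\epsilon\,L(1)=\epsilon>0$, contradicting $L(f)\le 0$; hence $f\in\M_S$. The step I expect to be the crux is the double use of the Archimedean hypothesis: once to place $1$ in the algebraic interior of $\M_S$ (so that separation produces a functional with $L(1)>0$ that can be normalized and fed to GNS), and once to force each $\hat X_i$ to be bounded, which is precisely what makes $\underline{\hat X}$ an admissible point of $\D_S^{\infty}$. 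The remaining technical points are the purely algebraic separation theorem for cones with core points, the verification that the GNS form is genuinely positive semidefinite with a well-behaved radical, and the observation that $f\succ 0$ for bounded operators means bounded below; granted these, the final contradiction is immediate.
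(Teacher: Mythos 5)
This statement is quoted from Helton--McCullough and the paper offers no proof of it, so there is nothing internal to compare against; your separation-plus-GNS argument is precisely the standard proof from the cited source and is essentially correct, including the two uses of Archimedeanity (to make $1$ an algebraic interior point of $\M_S$ and to bound the operators $\hat X_i$). The only imprecision is the claim that $f(\underline{\hat X})\succeq\epsilon I$, which need not follow from positive definiteness on an infinite-dimensional Hilbert space; it is also unnecessary, since $\hat 1\ne 0$ (because $L(1)=1$) already yields $L(f)=\langle f(\underline{\hat X})\hat 1,\hat 1\rangle>0$ and hence the desired contradiction with $L(f)\le 0$.
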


\subsection{Moment and localizing matrices}
With $\y=(y_{w})_{w\in\langle\underline{X}\rangle}$ being a sequence indexed by the standard monomial basis $\langle\underline{X}\rangle$ of $\R\langle\underline{X}\rangle$, let $L_{\y}:\R\langle\underline{X}\rangle\rightarrow\R$ be the linear functional
\begin{equation*}
f=\sum_{w}a_{w}w\mapsto L_{\y}(f)=\sum_{w}a_{w}y_{w}.
\end{equation*}
Given a monomial basis $\B$, the noncommutative {\em moment} matrix $M_{\B}(\y)$ associated with $\B$ and $\y$ is the matrix with rows and columns indexed by $\B$ such that
\begin{equation*}
M_{\B}(\y)_{uv}:=L_{\y}(u^{\star}v)=y_{u^{\star}v}, \quad\forall u,v\in\B.
\end{equation*}
If $\B$ is the standard monomial basis $\W_d$, we also denote $M_{\W_d}(\y)$ by $M_{d}(\y)$.

Suppose $g=\sum_{w}b_{w}w\in\Sym\,\R\langle\underline{X}\rangle$ and let $\y=(y_{w})_{w\in\langle\underline{X}\rangle}$ be given. For any positive integer $d$, the noncommutative {\em localizing} matrix $M_{d}(g\y)$ associated with $g$ and $\y$ is the matrix with rows and columns indexed by $\W_d$ such that
\begin{equation*}
M_{d}(g\y)_{uv}:=L_{\y}(u^{\star}gv)=\sum_{w\in\supp(g)}b_{w}y_{u^{\star}wv}, \quad\forall u,v\in\W_d.
\end{equation*}

\subsection{Eigenvalue optimization for noncommutative polynomials}\label{sec2-eo}
Given $f=\sum_{w}a_ww\in\Sym\,\R\langle\underline{X}\rangle$, the eigenvalue minimization problem for $f$ is defined by:
\begin{equation}\label{upop-eigen}
(\textrm{EP}_0):\quad\lambda_{\min}(f):=\inf\{\langle f(\underline{A})\bv,\bv\rangle:\underline{A}\in(\S^r)^n, r\in\N\backslash\{0\},||\bv||=1\}.
\end{equation}
Assume that $\B$ is a monomial basis. Then $(\textrm{EP}_0)$ is equivalent to the following SDP (\cite{burgdorf})
\begin{equation}\label{upop-eigen1}
(\textrm{EP}):\quad
\begin{array}{rll}
\lambda_{\min}(f)=&\inf&L_{\y}(f)\\
&\textrm{s.t.}&M_{\B}(\y)\succeq0,\\
&&y_{1}=1.
\end{array}
\end{equation}
Writing $M_{\B}(\y)=\sum_{w}A_{w}y_{w}$ for appropriate symmetric matrices $\{A_{w}\}_w$, the dual SDP of \eqref{upop-eigen1} is
\begin{equation}\label{upop-eigen2}
(\textrm{EP})^*:\quad
\begin{array}{ll}
\sup&\lambda\\
\textrm{s.t.} &\langle Q,A_{w}\rangle+\lambda\delta_{1w}=a_{w},\quad\forall w\in\B^{\star}\B,\\
&Q\succeq0,
\end{array}
\end{equation}
where $\B^{\star}\B:=\{u^{\star}v\mid u,v\in\B\}$ and $\delta_{1w}$ is the usual Kronecker symbol.

Given $f=\sum_{w}a_ww\in\Sym\,\R\langle\underline{X}\rangle$ and $S=\{g_1,\ldots,g_m\}\subseteq\Sym\,\R\langle\underline{X}\rangle$, let us consider the following eigenvalue minimization problem for $f$ over the operator semialgebraic set $\D_S^{\infty}$:
\begin{equation}\label{cpop-eigen}
(\textrm{EQ}_0):\quad\lambda_{\min}(f, S):=\inf\{\langle f(\underline{A})\bv,\bv\rangle:\underline{A}\in\D_S^{\infty}, ||\bv||=1\}.
\end{equation}
For convenience, we set $g_0:=1$ and let $d_j=\lceil\deg(g_j)/2\rceil$ for $j=0,1,\ldots,m$. Assume that $\hat{d}\ge d := \max\{\lceil\deg(f)/2\rceil,d_1,\ldots,d_m\}$ is a positive integer. As shown in \cite{pironio}, one has the following hierarchy of moment relaxations, indexed by $\hat{d}$, to obtain a sequence of lower bounds for the optimum $\lambda_{\min}(f, S)$ of ($\textrm{EQ}_0$):
\begin{equation}\label{cpop-eigen1}
(\textrm{EQ}_{\hat{d}}):\quad
\begin{array}{rll}
\lambda_{\hat{d}}(f, S):=&\inf &L_{\y}(f)\\
&\textrm{s.t.}&M_{\hat{d}}(\y)\succeq0,\\
&&M_{\hat{d}-d_j}(g_j\y)\succeq0,\quad j\in[m],\\
&&y_{1}=1.
\end{array}
\end{equation}
We call $\hat{d}$ the {\em relaxation order}. 
If the quadratic module $\M_S$ generated by $S$ is Archimedean then the sequence of lower bounds $(\lambda_{\hat{d}}(f, S))_{\hat d\geq d}$ converges to $\lambda_{\min}(f, S)$. 
See, e.g.,~\cite[Corollary~4.11]{burgdorf16} for a proof.

For each $j$, writing $M_{\hat{d}-d_j}(g_j\y)=\sum_{w}D_{w}^jy_{w}$ for appropriate symmetric matrices $\{D_{w}^j\}_{j,w}$, we can write the dual SDP of \eqref{cpop-eigen1} as:
\begin{equation}\label{cpop-eigen2}
(\textrm{EQ}_{\hat{d}})^*:\quad
\begin{array}{rll}
&\sup&\lambda\\
&\textrm{s.t.}&\displaystyle\sum_{j=0}^m\langle Q_j,D_{w}^j\rangle+\lambda\delta_{1w}=a_{w},\quad\forall w\in\W_{2\hat{d}},\\
&&Q_j\succeq0,\quad j\in\{0\}\cup[m].
\end{array}
\end{equation}

\subsection{Trace optimization for noncommutative polynomials}\label{sec2-to}
Given $g,h\in\R\langle\underline{X}\rangle$, the nc polynomial $[g, h]:=gh-hg$ is called a {\em commutator}. Two nc polynomials $g,h\in\R\langle\underline{X}\rangle$ are said to be {\em cyclically
equivalent}, denoted by $g\stackrel{\textrm{cyc}}{\sim} h$, if $g-h$ is a sum of commutators. Let $w\in\langle\underline{X}\rangle$. 
The canonical representative $[w]$ of $w$ is the minimal one with
respect to the lexicographic order among all words cyclically equivalent to $w$. For $\A\subseteq\langle\underline{X}\rangle$, $[\A]:=\{[w]\mid w\in\A\}$. For an nc polynomial $f=\sum_{w}a_ww\in\Sym\,\R\langle\underline{X}\rangle$, the canonical representative of $f$ is defined by $[f]:=\sum_{w}a_w[w]\in\R\langle\underline{X}\rangle$ and the {\em cyclic degree} of $f$ is defined as $\cdeg(f):=\deg([f])$.
We warn the reader about a small abuse of notation as $[k]$ stands for $\{1,\dots,k\}$ when $k$ is a positive integer.

The {\em normalized trace} of a matrix $A=[a_{ii}]\in\S^r$ is given by $\tr\,A=\frac{1}{r}\sum_{i=1}^ra_{ii}$. Given $f=\sum_{w}a_ww\in\Sym\,\R\langle\underline{X}\rangle$, the trace minimization problem for $f$ is defined by:
\begin{equation}\label{upop-trace}
(\textrm{TP}_0):\quad\tr_{\min}(f):=\inf\{\tr\,f(\underline{A}):\underline{A}\in(\S^r)^n, r\in\N\backslash\{0\}\}.
\end{equation}
Let $d=\cdeg(f)$. As shown in \cite{burgdorf}, $(\textrm{TP}_0)$ admits the following moment relaxation:
\begin{equation}\label{upop-trace1}
(\textrm{TP}):\quad
\begin{array}{rll}
\mu(f):=&\inf&L_{\y}(f)\\
&\textrm{s.t.}&M_{d}(\y)\succeq0,\\
&&M_{d}(\y)_{uv}=M_{d}(\y)_{wz},\quad\textrm{for all }u^{\star}v\stackrel{\textrm{cyc}}{\sim} w^{\star}z,\\
&&y_{1}=1.
\end{array}
\end{equation}
The dual of $(\textrm{TP})$ reads as:
\begin{equation}\label{upop-trace2}
(\textrm{TP})^*:\quad
\begin{array}{ll}
\sup&\mu\\
\textrm{s.t.} &\sum_{w\stackrel{\textrm{cyc}}{\sim} v}(\langle Q,A_{w}\rangle+\mu\delta_{1w})=\sum_{w\stackrel{\textrm{cyc}}{\sim} v}a_{w},\quad\forall v\in\W_{2d},\\
&Q\succeq0.
\end{array}
\end{equation}

Given $f=\sum_{w}a_ww\in\Sym\,\R\langle\underline{X}\rangle$ and $S=\{g_1,\ldots,g_m\}\subseteq\Sym\,\R\langle\underline{X}\rangle$, the trace minimization problem for $f$ over the semialgebraic set $\D_S$ is defined by:
\begin{equation}\label{cpop-trace}
(\textrm{TQ}_0):\quad\tr_{\min}(f, S):=\inf\{\tr\,f(\underline{A}):\underline{A}\in\D_S\}.
\end{equation}
We produce lower bounds on $\tr_{\min}(f, S)$ by restricting ourselves to a specific subset of ${D_S^\infty}$, 
obtained by considering the algebra of all bounded operators on a Hilbert space to finite von Neumann algebras~\cite{Takesaki03} of type I and type II. 
We introduce $\tr_{\min}(f,S)^{\II_1}$ as the trace minimum of $f$ on ${D_S^{\II_1}}$. Since ${D_S}$ can be described by ${D_S^{\II_1}} $, one has $\tr_{\min}(f,S)^{\II_1} \leq \tr_{\min}(f,S)$.
For a proper definition of ${D_S^{\II_1}}$, we refer the interested reader to, e.g.,~\cite[Definition~1.59]{burgdorf16}.
As shown in \cite{pironio}, one has the following series of moment relaxations indexed by $\hat{d}\ge d$ to obtain a hierarchy of lower bounds for $\tr_{\min}(f,S)^{\II_1}$:
\begin{equation}\label{cpop-trace1}
(\textrm{TQ}_{\hat{d}}):\quad
\begin{array}{rll}
\mu_{\hat{d}}(f, S):=&\inf &L_{\y}(f)\\
&\textrm{s.t.}&M_{\hat{d}}(\y)\succeq0,\\
&&M_{\hat{d}-d_j}(g_j\y)\succeq0,\quad j\in[m],\\
&&M_{\hat{d}}(\y)_{uv}=M_{\hat{d}}(\y)_{wz},\quad\textrm{for all }u^{\star}v\stackrel{\textrm{cyc}}{\sim} w^{\star}z,\\
&&y_{1}=1.
\end{array}
\end{equation}
We call $\hat{d}$ the {\em relaxation order}.
If the quadratic module $\M_S$ generated by $S$ is Archimedean then the sequence of bounds $(\mu_{\hat{d}}(f, S))_{\hat d\geq d}$ converges to $\tr_{\min}(f,S)^{\II_1}$. 
See, e.g.,~\cite[Corollary~3.5]{burgdorf16} for a proof.

The dual of \eqref{cpop-trace1} reads as:
\begin{equation}\label{cpop-trace2}
(\textrm{TQ}_{\hat{d}})^*:\quad
\begin{array}{rll}
&\sup&\mu\\
&\textrm{s.t.}&\sum_{w\stackrel{\textrm{cyc}}{\sim} v}(\sum_{j=0}^m\langle Q_j,D_{w}^j\rangle+\mu\delta_{1w})=\sum_{w\stackrel{\textrm{cyc}}{\sim} v}a_{w},\quad\forall v\in\W_{2\hat{d}},\\
&&Q_j\succeq0,\quad j\in\{0\}\cup[m].
\end{array}
\end{equation}

\subsection{Chordal graphs and sparse matrices}
In this subsection, we briefly revisit the relationship between chordal graphs and sparse matrices, which is crucial for the sparsity-exploitation of this paper. For more details on chordal graphs and sparse matrices, the reader is referred to \cite{va}.

An (undirected) {\em graph} $G(V,E)$ or simply $G$ consists of a set of nodes $V$ and a set of edges $E\subseteq\{\{v_i,v_j\}\mid (v_i,v_j)\in V\times V\}$. 
When $G$ is a graph, we also use $V(G)$ and $E(G)$ to indicate the node set of $G$ and the edge set of $G$, respectively. The {\em adjacency matrix} of $G$ is denoted by $B_G$ for which we put ones on its diagonal. For two graphs $G,H$, we say that $G$ is a {\em subgraph} of $H$ if $V(G)\subseteq V(H)$ and $E(G)\subseteq E(H)$, denoted by $G\subseteq H$. For a graph $G(V,E)$, a {\em cycle} of length $k$ is a set of nodes $\{v_1,v_2,\ldots,v_k\}\subseteq V$ with $\{v_k,v_1\}\in E$ and $\{v_i, v_{i+1}\}\in E$, for $i=1,\ldots,k-1$. A {\em chord} in a cycle $\{v_1,v_2,\ldots,v_k\}$ is an edge $\{v_i, v_j\}$ that joins two nonconsecutive nodes in the cycle. 

A graph is called a {\em chordal graph} if all its cycles of length at least four have a chord. Note that any non-chordal graph $G(V,E)$ can always be extended to a chordal graph $\overline{G}(V,\overline{E})$ by adding appropriate edges to $E$, which is called a {\em chordal extension} of $G(V,E)$. A {\em clique} $C\subseteq V$ of $G$ is a subset of nodes where $\{v_i,v_j\}\in E$ for any $v_i,v_j\in C$. If a clique $C$ is not a subset of any other clique, then it is called a {\em maximal clique}. It is known that maximal cliques of a chordal graph can be enumerated efficiently in linear time in the number of nodes and edges of the graph \cite{bp}.

Given a graph $G(V,E)$, a symmetric matrix $Q$ with row and column indices labeled by $V$ is said to have sparsity pattern $G$ if $Q_{\b\g}=Q_{\g\b}=0$ whenever $\b\ne\g$ and $\{\b,\g\}\notin E$, i.e., $B_G\circ Q=Q$. Let $\S_G$ be the set of symmetric matrices with sparsity pattern $G$.
A matrix in $\S_G$ exhibits a {\em block structure}. Each block corresponds to a maximal clique of $G$. The maximal block size is the maximal size of maximal cliques of $G$, namely, the \emph{clique number} of $G$. Note that there might be overlaps between blocks because different maximal cliques may share nodes.


Given a maximal clique $C$ of $G(V,E)$, we define a matrix $P_{C}\in \R^{|C|\times |V|}$ as
\begin{equation}\label{sec2-eq6}
[P_{C}]_{i\b}=\begin{cases}
1, &\textrm{if }C(i)=\b,\\
0, &\textrm{otherwise}.
\end{cases}
\end{equation}
where $C(i)$ denotes the $i$-th node in $C$, sorted in the ordering compatibly with $V$. Note that $Q_{C}=P_{C}QP_{C}^T\in \S^{|C|}$ extracts a principal submatrix $Q_C$ defined by the indices in the clique $C$ from a symmetry matrix $Q$, and $Q=P_{C}^TQ_{C}P_{C}$ inflates a $|C|\times|C|$ matrix $Q_{C}$ into a sparse $|V|\times |V|$ matrix $Q$.

The PSD matrices with sparsity pattern $G$ form a convex cone
\begin{equation}\label{sec2-eq5}
\S_+^{|V|}\cap\S_G=\{Q\in\S_G\mid Q\succeq0\}.
\end{equation}
When the sparsity pattern graph $G$ is chordal, the cone $\S_+^{|V|}\cap\S_G$ can be
decomposed as a sum of simple convex cones, as stated in the following theorem.
\begin{theorem}[\cite{va}, Theorem 9.2]\label{sec2-thm}
Let $G(V,E)$ be a chordal graph and assume that $C_1,\ldots,C_t$ are all the maximal cliques of $G(V,E)$. Then a matrix $Q\in\S_+^{|V|}\cap\S_G$ if and only if there exist $Q_{k}\in \S_+^{|C_k|}$ for $k=1,\ldots,t$ such that $Q=\sum_{k=1}^tP_{C_k}^TQ_{k}P_{C_k}$.
\end{theorem}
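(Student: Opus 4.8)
The plan is to prove the two implications separately. The forward direction is immediate: if $Q=\sum_{k=1}^tP_{C_k}^TQ_kP_{C_k}$ with each $Q_k\succeq0$, then for every $x$ one has $x^TP_{C_k}^TQ_kP_{C_k}x=(P_{C_k}x)^TQ_k(P_{C_k}x)\ge0$, so $Q\succeq0$; and each summand $P_{C_k}^TQ_kP_{C_k}$ vanishes outside the principal block indexed by $C_k$, whose off-diagonal index pairs all lie in $E$ since $C_k$ is a clique, hence $Q\in\S_G$. So $Q\in\S_+^{|V|}\cap\S_G$.

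For the converse I would argue by induction on the number $t$ of maximal cliques. The preparatory step is to record (or cite from \cite{va}) the structural fact that a chordal graph admits a \emph{clique tree} on $C_1,\dots,C_t$ with the running-intersection property, equivalently: for each vertex $v$ the set of maximal cliques containing $v$ induces a connected subtree. The base case $t=1$ forces $G$ to be complete on $V$, so $\S_G=\S^{|V|}$, $P_{C_1}=\mathbf{I}$, and $Q=P_{C_1}^TQP_{C_1}$ works. For $t\ge2$, pick a leaf $C_t$ of the clique tree with unique tree-neighbour $C_s$, set $S:=C_t\cap C_s$ and $U:=C_t\setminus S$, and put $W:=V\setminus U$. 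Running intersection gives $U\ne\emptyset$ (by maximality of $C_t$) and that every vertex of $U$ lies in no maximal clique other than $C_t$; consequently, writing $Q$ in block form with respect to $V=U\sqcup W$, the block $Q_{UW}$ is supported only on the columns indexed by $S$, the set $S$ is a clique, and $C_1,\dots,C_{t-1}$ are exactly the maximal cliques of the chordal induced subgraph $G[W]$.

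Assume first $Q\succ0$, so $Q_{UU}\succ0$. Form the Schur complement $\tilde Q:=Q_{WW}-Q_{WU}Q_{UU}^{-1}Q_{UW}\succ0$, a matrix indexed by $W$. Since $Q_{UW}$ is supported on the $S$-columns, the correction $Q_{WU}Q_{UU}^{-1}Q_{UW}$ is supported on $S\times S$, so $\tilde Q$ and $Q_{WW}$ agree off $S\times S$; as $S$ is a clique of $G[W]$, this yields $\tilde Q\in\S_+^{|W|}\cap\S_{G[W]}$. By the induction hypothesis $\tilde Q=\sum_{k=1}^{t-1}\bar P_{C_k}^T\tilde Q_k\bar P_{C_k}$ with $\tilde Q_k\succeq0$ and $\bar P_{C_k}$ the clique projections inside $W$; lifting each term by zeros onto $U$ gives $\mathrm{diag}(0,\tilde Q)=\sum_{k=1}^{t-1}P_{C_k}^T\tilde Q_kP_{C_k}$. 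The remaining matrix $Q-\mathrm{diag}(0,\tilde Q)$ equals
\begin{equation*}
\begin{pmatrix}Q_{UU}&Q_{UW}\\Q_{WU}&Q_{WU}Q_{UU}^{-1}Q_{UW}\end{pmatrix}
=\begin{pmatrix}\mathbf{I}\\Q_{WU}Q_{UU}^{-1}\end{pmatrix}Q_{UU}\begin{pmatrix}\mathbf{I}&Q_{UU}^{-1}Q_{UW}\end{pmatrix}\succeq0,
\end{equation*}
and is supported on $(U\cup S)\times(U\cup S)=C_t\times C_t$; its principal submatrix on $C_t$ is the desired $Q_t\succeq0$, so $Q=\sum_{k=1}^tP_{C_k}^TQ_kP_{C_k}$. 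For general $Q\in\S_+^{|V|}\cap\S_G$, apply the above to $Q+\varepsilon\mathbf{I}\succ0$ (still in $\S_G$) to obtain PSD blocks $Q_k^{(\varepsilon)}$; the $v$-th diagonal entry of $\sum_kP_{C_k}^TQ_k^{(\varepsilon)}P_{C_k}$ dominates $(Q_k^{(\varepsilon)})_{vv}\ge0$ for each $k$ with $v\in C_k$, so the diagonals — and hence, by positive semidefiniteness, all entries — of the $Q_k^{(\varepsilon)}$ stay bounded as $\varepsilon\to0$; extracting a convergent subsequence and using closedness of the PSD cone yields the decomposition of $Q$.

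The main obstacle is the passage from the strictly definite case to the general PSD case: one must either run the Schur-complement step with Moore--Penrose pseudoinverses (which needs $\mathrm{range}(Q_{UW})\subseteq\mathrm{range}(Q_{UU})$, itself a consequence of $Q\succeq0$) or, as above, argue by perturbation and a limit, where the crux is the a priori boundedness of the clique blocks. A secondary point requiring care is the combinatorial bookkeeping showing that deleting the ``private'' vertices $U$ of a leaf clique leaves a chordal graph with maximal cliques precisely $C_1,\dots,C_{t-1}$ and with $Q_{UW}$ supported on $S$; this is exactly where the running-intersection property of the clique tree enters.
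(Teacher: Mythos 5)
The paper does not prove this statement; it is quoted verbatim from the reference \cite{va} (Theorem 9.2), so there is no in-paper argument to compare against. Your proof is correct and is essentially the standard argument from that reference: induction along a clique tree with the running-intersection property, splitting off the private vertices $U$ of a leaf clique, a Schur-complement step in the positive definite case, and a perturbation-plus-compactness argument (with the diagonal bound giving a priori boundedness of the clique blocks) to pass to the general PSD case.
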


Given a graph $G(V,E)$, let $\Pi_{G}$ be the projection from $\S^{|V|}$ to the subspace $\S_G$, i.e., for $Q\in\S^{|V|}$,
\begin{equation}\label{sec2-eq7}
\Pi_{G}(Q)_{\b\g}=\begin{cases}
Q_{\b\g}, &\textrm{if }\{\b,\g\}\in E\textrm{ or }\b=\g,\\
0, &\textrm{otherwise}.
\end{cases}
\end{equation}

We denote by $\Pi_{G}(\S_+^{|V|})$ the set of matrices in $\S_G$ that have a PSD completion, i.e., 
\begin{equation}\label{sec2-eq8}
\Pi_{G}(\S_+^{|V|})=\{\Pi_{G}(Q)\mid Q\in\S_+^{|V|}\}.
\end{equation}

One can check that the PSD completable cone $\Pi_{G}(\S_+^{|V|})$ and the PSD cone $\S_+^{|V|}\cap\S_G$ form a pair of dual cones in $\S_G$; see \cite[Section 10.1]{va} for a proof. Moreover, for a chordal graph $G$, the decomposition result for the cone $\S_+^{|V|}\cap\S_G$ in Theorem \ref{sec2-thm} leads to the following characterization of the PSD completable cone $\Pi_{G}(\S_+^{|V|})$.
\begin{theorem}[\cite{va}, Theorem 10.1]\label{sec2-thm2}
Let $G(V,E)$ be a chordal graph and assume that $C_1,\ldots,C_t$ are all the maximal cliques of $G(V,E)$. Then a matrix $Q\in\Pi_{G}(\S_+^{|V|})$ if and only if $Q_{k}=P_{C_k}QP_{C_k}^T\succeq0$ for $k=1,\ldots,t$. Moreover, a matrix $Q\in\Pi_{G}(\S_{++}^{|V|})$ if and only if $Q_{k}=P_{C_k}QP_{C_k}^T\succ0$ for $k=1,\ldots,t$.
\end{theorem}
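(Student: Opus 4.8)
The plan is to deduce the characterization from Theorem~\ref{sec2-thm} by a conic duality argument, exploiting the fact recalled just above the statement that $\Pi_G(\S_+^{|V|})$ and $\S_+^{|V|}\cap\S_G$ are mutually dual cones inside the ambient space $\S_G$ equipped with the trace inner product. First I would rewrite the clique decomposition of Theorem~\ref{sec2-thm} as a Minkowski sum of cones inside $\S_G$: setting $\mathcal K_k:=\{P_{C_k}^TZP_{C_k}\mid Z\in\S_+^{|C_k|}\}$ (each $\mathcal K_k\subseteq\S_G$, since the support of $P_{C_k}^TZP_{C_k}$ is contained in the clique $C_k$), Theorem~\ref{sec2-thm} reads $\S_+^{|V|}\cap\S_G=\sum_{k=1}^t\mathcal K_k$.

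Next I would compute, for each $k$, the dual cone of $\mathcal K_k$ taken inside $\S_G$. The key identity is $\langle Q,P_{C_k}^TZP_{C_k}\rangle=\langle P_{C_k}QP_{C_k}^T,Z\rangle=\langle Q_k,Z\rangle$ for $Q\in\S_G$ and $Z\in\S^{|C_k|}$, so $\langle Q,\cdot\rangle$ is nonnegative on $\mathcal K_k$ if and only if $Q_k$ lies in the dual of $\S_+^{|C_k|}$, i.e.\ if and only if $Q_k\succeq0$ by self-duality of the PSD cone; hence $\mathcal K_k^{*}=\{Q\in\S_G\mid Q_k\succeq0\}$. Using the elementary identity $(\sum_k\mathcal K_k)^{*}=\bigcap_k\mathcal K_k^{*}$, valid for any finite family of convex cones, together with the duality $\Pi_G(\S_+^{|V|})=(\S_+^{|V|}\cap\S_G)^{*}$, I obtain
\begin{equation*}
\Pi_G(\S_+^{|V|})=\Big(\sum_{k=1}^t\mathcal K_k\Big)^{*}=\bigcap_{k=1}^t\mathcal K_k^{*}=\{Q\in\S_G\mid Q_k=P_{C_k}QP_{C_k}^T\succeq0,\ k=1,\dots,t\},
\end{equation*}
which is exactly the first claim. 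Here I would make explicit that $\Pi_G(\S_+^{|V|})$ is a closed cone — e.g.\ because $\S_+^{|V|}\cap\S_G$ is closed with nonempty interior in $\S_G$, the identity being an interior point — so that it genuinely coincides with the bidual.

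For the ``moreover'' part, one direction is immediate: if $Q=\Pi_G(\hat Q)$ with $\hat Q\succ0$, then since the projection $\Pi_G$ leaves the entries indexed within any clique unchanged, $P_{C_k}QP_{C_k}^T=P_{C_k}\hat QP_{C_k}^T$ is a principal submatrix of a positive definite matrix, hence $\succ0$. For the converse I would argue topologically: $\Pi_G(\S_{++}^{|V|})$ is the image of the open set $\S_{++}^{|V|}$ under the surjective linear (hence open) map $\Pi_G\colon\S^{|V|}\to\S_G$, so it is an open convex cone; it is contained in the open convex cone $\mathcal C:=\{Q\in\S_G\mid Q_k\succ0,\ k=1,\dots,t\}$; and both have closure equal to $\{Q\in\S_G\mid Q_k\succeq0\ \forall k\}$ — the density being seen by adding $\varepsilon I$, noting $I\in\S_G$ and $\Pi_G(I)=I$ — while $\mathcal C$ is precisely the interior of that closed set (the interior of a finite intersection is the intersection of the interiors, and each $\{Q_k\succeq0\}$ has interior $\{Q_k\succ0\}$ because $Q\mapsto Q_k$ is an open surjection onto $\S^{|C_k|}$). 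Since an open convex cone equals the interior of its closure, $\Pi_G(\S_{++}^{|V|})=\mathcal C$.

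The algebraic heart of the argument (the steps with $\mathcal K_k$ and its dual) is routine once the pairing identity $\langle Q,P_C^TZP_C\rangle=\langle P_CQP_C^T,Z\rangle$ is in hand; the step I expect to require the most care is the ``moreover'' part, namely the topological bookkeeping that $\Pi_G(\S_{++}^{|V|})$ is open and that $\mathcal C$ is exactly the relative interior in $\S_G$ of the closed cone $\{Q\mid Q_k\succeq0\ \forall k\}$ (which in turn relies on that cone being full-dimensional in $\S_G$, equivalently on its dual $\S_+^{|V|}\cap\S_G$ being pointed). An alternative, self-contained route would bypass duality entirely and prove the nontrivial inclusion by an explicit completion along a perfect elimination ordering of the chordal graph, filling in the unspecified entries one clique at a time via Schur complements (the maximum-determinant completion); but given that Theorem~\ref{sec2-thm} and the dual-cone relation are already available, the duality proof above is the most economical.
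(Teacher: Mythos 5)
The paper does not prove this statement at all: it is quoted verbatim as Theorem~10.1 of \cite{va} (the Grone--Johnson--S\'a--Wolkowicz/Vandenberghe--Andersen PSD completion theorem), so there is no in-paper argument to compare against. Your proof is, on its own terms, correct and well adapted to the ingredients the paper does make available: you dualize the clique decomposition of Theorem~\ref{sec2-thm} inside $\S_G$, using the adjoint identity $\langle Q,P_{C_k}^TZP_{C_k}\rangle=\langle P_{C_k}QP_{C_k}^T,Z\rangle$, self-duality of the PSD cone, $(\sum_k\mathcal K_k)^*=\bigcap_k\mathcal K_k^*$, and the dual-pair relation between $\Pi_G(\S_+^{|V|})$ and $\S_+^{|V|}\cap\S_G$ that the paper asserts in the preceding paragraph. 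This is essentially the modern route (and the one in \cite{va}); the classical alternative you mention, explicit completion along a perfect elimination ordering, is what the original references do and is what one would need if Theorem~\ref{sec2-thm} were not already granted. The topological argument for the ``moreover'' part (openness of $\Pi_G(\S_{++}^{|V|})$ as the image of an open set under a surjective linear map, identification of $\mathcal C$ with the interior of $\{Q\mid Q_k\succeq0\ \forall k\}$ via openness of $Q\mapsto Q_k$, and the fact that an open convex set equals the interior of its closure) is sound.

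One small repair: your stated reason for closedness of $\Pi_G(\S_+^{|V|})$ --- ``because $\S_+^{|V|}\cap\S_G$ is closed with nonempty interior'' --- is not by itself a valid implication (a nonempty-interior dual only forces the primal cone to be pointed). Either read the paper's ``pair of dual cones'' as already asserting $\Pi_G(\S_+^{|V|})=(\S_+^{|V|}\cap\S_G)^*$, in which case closedness is automatic and this step is unnecessary, or argue directly that $\ker\Pi_G\cap\S_+^{|V|}=\{0\}$ (a PSD matrix with zero diagonal vanishes, and $\Pi_G$ preserves diagonals), which is the standard sufficient condition for the linear image of a closed cone to be closed.
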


\section{Eigenvalue Optimization for Noncommutative Polynomials with term sparsity}\label{sec3}
In this section, we consider the eigenvalue optimization problem for noncommutative polynomials with term sparsity. For the reader's convenience, we first deal with the unconstrained case and then generalize to the constrained case.
\subsection{The unconstrained case}\label{ueo}
In this subsection, we describe an iterative procedure to exploit term sparsity for the moment-SOHS relaxations \eqref{upop-eigen1}-\eqref{upop-eigen2} of the unconstrained NCPOP $(\textrm{EP}_0)$ defined in \eqref{upop-eigen}.  

Let $f=\sum_{w\in\A}a_{w}w\in\Sym\,\R\langle\underline{X}\rangle$ with $\supp(f)=\A$ (w.l.o.g. assuming $1\in\A$). Assume that $\B$ is the monomial basis returned by the Newton chip method~\cite[\textsection2.3]{burgdorf16} with $r=|\B|$.
To represent the term sparsity in $f$, in the sequel we will consider graphs with $V:=\B$ as the set of nodes. Suppose that $G(V,E)$ is such a graph. We define the {\em support} of $G$ by
$$\supp(G):=\{u^{\star}v\mid(u,v)\in V\times V,\,\{u,v\}\in E\}.$$
We further define two operations on $G$: {\em support extension} and {\em chordal extension}. 

1) {\bf support extension}: The support extension of $G$, denoted by $\SE(G)$, is the graph with nodes $\B$ and with edges
$$E(\SE(G)):=\{\{u,v\}\mid(u,v)\in V\times V,\, u\ne v,\,u^{\star}v\in\supp(G)\cup\B^2\},$$
where $\B^2:=\{u^{\star}u\mid u\in\B\}$.

\begin{example}
Consider the following graph $G(V,E)$ with $$V=\{1,X,Y,Z,YZ,ZX,XY\} \textrm{ and } E=\{\{1,YZ\},\{Y,ZX\}\}.$$
Then $E(\SE(G))=\{\{1,YZ\},\{Y,ZX\},\{Y,Z\}\}$. See Figure \ref{support} for the support extension $\SE(G)$ of $G$.
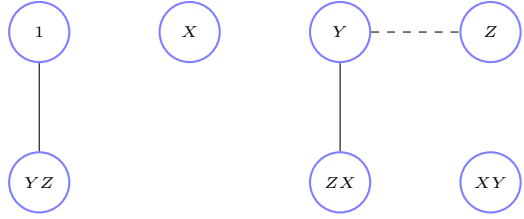
\begin{figure}[htbp]
\caption{The support extension $\SE(G)$ of $G$}\label{support}
\begin{center}
{\tiny
\begin{tikzpicture}[every node/.style={circle, draw=blue!50, thick, minimum size=8mm}]
\node (n1) at (0,0) {$1$};
\node (n2) at (2,0) {$X$};
\node (n3) at (4,0) {$Y$};
\node (n4) at (6,0) {$Z$};
\node (n5) at (0,-2) {$YZ$};
\node (n6) at (4,-2) {$ZX$};
\node (n7) at (6,-2) {$XY$};
\draw (n1)--(n5);
\draw (n3)--(n6);
\draw[dashed] (n3)--(n4);
\end{tikzpicture}}\\
{\small The dashed edges are added after support extension.}
\end{center}
\end{figure} 
\end{example}

2) {\bf chordal extension}: For a graph $G$, we denote any specific chordal extension of $G$ by $\overline{G}$. There are generally various chordal extensions of $G$. In this paper, we will consider two particular types of chordal extensions: {\em the maximal chordal extension} and {\em approximately minimum chordal extensions}. By the maximal chordal extension, we refer to the chordal extension that completes every connected component of $G$. The maximal chordal extension can be easily computed by listing all connected components. Another advantage of the maximal chordal extension is that there is no overlap among maximal cliques. However, the clique number of the maximal chordal extension may be large among all possible chordal extensions. A chordal extension with the lowest possible clique number is called a {\em minimum chordal extension}. Computing a minimum chordal extension of a graph is an NP-complete problem in general. Fortunately, several heuristic algorithms, e.g., the greedy minimum degree and the greedy minimum fill-ins, are known to efficiently produce a good approximation; see \cite{treewidth} for more detailed discussions. Throughout the paper, we assume that for graphs $G,H$,
\begin{equation}\label{assum}
G\subseteq H\Longrightarrow \overline{G}\subseteq\overline{H}.
\end{equation}
This assumption is reasonable since any chordal extension of $H$ restricting to $G$ is also a chordal extension of $G$.

\begin{example}
Consider the following graph $G(V,E)$ with $V=\{X_1,X_2,X_3,X_4,X_5,$ $X_6\}$ and $E=\{\{X_1,X_2\},\{X_2,X_3\},\{X_3,X_4\},\{X_4,X_5\},\{X_5,X_6\},\{X_6,X_1\}\}.$
See Figure \ref{chordal} for a minimum chordal extension $\overline{G}$ of $G$ which has $4$ maximal cliques of size $3$. On the other hand, the maximal chordal extension of $G$ has $1$ maximal clique of size $6$.
\begin{figure}[htbp]
\caption{A minimum chordal extension $\overline{G}$ of $G$}\label{chordal}
\begin{center}
{\tiny
\begin{tikzpicture}[every node/.style={circle, draw=blue!50, thick, minimum size=7.5mm}]
\node (n2) at (90:2) {$X_1$};
\node (n3) at (30:2) {$X_2$};
\node (n4) at (330:2) {$X_3$};
\node (n5) at (270:2) {$X_4$};
\node (n6) at (210:2) {$X_5$};
\node (n1) at (150:2) {$X_6$};
\draw (n2)--(n3);
\draw[dashed] (n2)--(n4);
\draw[dashed] (n2)--(n5);
\draw[dashed] (n2)--(n6);
\draw (n3)--(n4);
\draw (n4)--(n5);
\draw (n5)--(n6);
\draw (n6)--(n1);
\draw (n1)--(n2);
\end{tikzpicture}}\\
{\small The dashed edges are added after chordal extension.}
\end{center}
\end{figure}
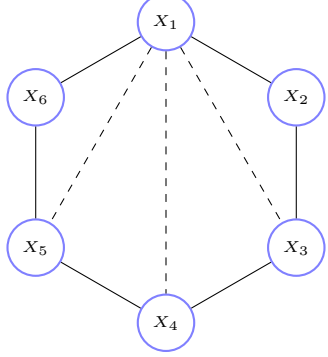 
\end{example}

Now we define $G_0(V,E_0)$ to be the graph with $V=\B$ and
\begin{equation}\label{e0}
    E_0=\{\{u,v\}\mid(u,v)\in V\times V,\,u\ne v,\,u^{\star}v\in\A\cup\B^2\},
\end{equation}
which is called the {\em term sparsity pattern (tsp) graph} associated with $f$. We then recursively define a sequence of graphs $(G_k(V,E_k))_{k\ge1}$ by alternately performing support extension and chordal extension to $G_0(V,E_0)$:
\begin{equation}\label{sec3-graph1}
G_k:=\overline{\SE(G_{k-1})}.
\end{equation}

When $f$ is sparse (i.e., $G_1$ is not complete), by replacing $M_{\B}(\y)\succeq0$ with the weaker condition $B_{G_k}\circ M_{\B}(\y)\in\Pi_{G_k}(\S_+^{r})$ in \eqref{upop-eigen1}, we obtain a series of sparse moment relaxations of $(\textrm{EP})$ (and $(\textrm{EP}_0)$) indexed by $k\ge1$:
\begin{equation}\label{upop-seigen1}
(\textrm{EP}^k):\quad
\begin{array}{rll}
\lambda_k(f):=&\inf &L_{\y}(f)\\
&\textrm{s.t.}&B_{G_k}\circ M_{\B}(\y)\in\Pi_{G_k}(\S^{r}_+),\\
&&y_{1}=1.
\end{array}
\end{equation}
We call $k$ the {\em sparse order}. By construction, one has $G_{k}\subseteq G_{k+1}$ for all $k\ge1$ and therefore the sequence of graphs $(G_k(V,E_k))_{k\ge1}$ stabilizes after a finite number of steps. We denote the stabilized graph by $G_{\circ}(V,E_{\circ})$ and the corresponding moment relaxation by $(\textrm{EP}^{\circ})$ (with optimum $\lambda_{\circ}(f)$).

For each $k\ge1$, the dual SDP of \eqref{upop-seigen1} reads as:
\begin{equation}\label{upop-seigen2}
(\textrm{EP}^k)^*:\quad
\begin{array}{ll}
\sup&\lambda\\
\textrm{s.t.}&\langle Q,A_{w}\rangle+\lambda\delta_{1w}=a_{w},\quad\forall w\in\supp(G_k)\cup\B^2,\\
&Q\in\S_+^{r}\cap\S_{G_k},
\end{array}
\end{equation}
where $A_{w}$ is defined in Section~\ref{sec2-eo}.

\begin{theorem}\label{sec3-thm}
Assume that $f\in\Sym\,\R\langle\underline{X}\rangle$. Then the followings hold:
\begin{enumerate}[(i)]
    \item For each $k\ge1$, there is no duality gap between $(\textrm{EP}^k)$ and $(\textrm{EP}^k)^*$.
    \item The sequence $(\lambda_k(f))_{k\ge 1}$ is monotone nondecreasing and $\lambda_k(f)\le\lambda_{\min}(f)$ for all $k$ (with $\lambda_{\min}(f)$ defined in \eqref{upop-eigen1}).
    \item If the maximal chordal extension is used in \eqref{sec3-graph1}, then $(\lambda_k(f))_{k\ge 1}$ converges to $\lambda_{\min}(f)$ in finitely many steps, i.e., $\lambda_{\circ}(f)=\lambda_{\min}(f)$.
\end{enumerate}
\end{theorem}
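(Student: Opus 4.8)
The plan is to treat the three items separately: part~(i) is a routine application of conic duality, part~(ii) is monotonicity at the level of feasible sets, and part~(iii) carries the real content. For part~(i), I would establish Slater's condition for the primal $(\textrm{EP}^k)$ and then invoke standard conic duality, which yields both the absence of a duality gap and attainment of the dual optimum when it is finite; the relevant pair of dual cones is $\Pi_{G_k}(\S_+^{r})$ and $\S_+^{r}\cap\S_{G_k}$, as recalled after \eqref{sec2-eq8}. It suffices to produce a sequence $\y$ with $y_1=1$ whose \emph{full} moment matrix $M_{\B}(\y)$ is positive definite: such $\y$ exists classically (e.g.\ take a generic tuple $\underline{A}\in(\S^{N})^{n}$ with $N$ large and a generic unit vector $\bv$, and set $y_w=\langle w(\underline{A})\bv,\bv\rangle$, so that $M_{\B}(\y)$ is the Gram matrix of the linearly independent vectors $\{u(\underline{A})\bv:u\in\B\}$). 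For such $\y$, every principal submatrix of $M_{\B}(\y)$ indexed by a maximal clique $C$ of $G_k$ equals $P_{C}(B_{G_k}\circ M_{\B}(\y))P_{C}^{T}$ and is positive definite, so Theorem~\ref{sec2-thm2} places $B_{G_k}\circ M_{\B}(\y)$ in $\Pi_{G_k}(\S_{++}^{r})$, i.e.\ in the relative interior of the feasible set of $(\textrm{EP}^k)$.

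For part~(ii), I would first record that the graph sequence is nondecreasing: $G_k\subseteq\SE(G_k)\subseteq\overline{\SE(G_k)}=G_{k+1}$, using that support extension only adds edges and, inductively, that \eqref{assum} propagates inclusions through the chordal-extension step. If $\y$ is feasible for $(\textrm{EP}^{k+1})$, then by Theorem~\ref{sec2-thm2} every maximal-clique block of $B_{G_{k+1}}\circ M_{\B}(\y)$ is PSD; since $G_k\subseteq G_{k+1}$, every maximal clique of $G_k$ lies inside a maximal clique of $G_{k+1}$, and on a clique the entries of $B_{G_k}\circ M_{\B}(\y)$, of $B_{G_{k+1}}\circ M_{\B}(\y)$ and of $M_{\B}(\y)$ all coincide, so each maximal-clique block of $B_{G_k}\circ M_{\B}(\y)$ is a principal submatrix of a PSD matrix, hence PSD; by Theorem~\ref{sec2-thm2} again, $\y$ is feasible for $(\textrm{EP}^k)$, giving $\lambda_k(f)\le\lambda_{k+1}(f)$. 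The bound $\lambda_k(f)\le\lambda_{\min}(f)$ is the same argument with $G_k$ replaced by the complete graph on $\B$: $M_{\B}(\y)\succeq0$ forces $B_{G_k}\circ M_{\B}(\y)=\Pi_{G_k}(M_{\B}(\y))\in\Pi_{G_k}(\S_+^{r})$, so any $\y$ feasible for \eqref{upop-eigen1} is feasible for $(\textrm{EP}^k)$ with the same objective value.

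For part~(iii), finiteness of the number of steps is immediate since $(G_k)_{k\ge1}$ is a nondecreasing sequence of subgraphs of the finite complete graph on $\B$, so it remains to prove $\lambda_{\circ}(f)\ge\lambda_{\min}(f)$, the reverse inequality being part~(ii). I would use two structural facts about the stabilized graph $G_{\circ}=\overline{\SE(G_{\circ})}$: first, since the maximal chordal extension completes every connected component, $G_{\circ}$ is a disjoint union of cliques $C_1,\dots,C_t$ partitioning $\B$, so $\S_{G_{\circ}}$ consists of block-diagonal matrices and $\Pi_{G_{\circ}}$ is the block-diagonal truncation; second, stabilization yields the fixed-point identity $\SE(G_{\circ})=G_{\circ}$, i.e.\ for $u\ne v$ one has $\{u,v\}\in E_{\circ}$ iff $u^{\star}v\in\supp(G_{\circ})\cup\B^2$. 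Combining this with $\supp(f)\subseteq\B^{\star}\B$ (which holds for the Newton chip basis and is in any case implicit in \eqref{upop-eigen1}--\eqref{upop-eigen2}) and with the definition \eqref{e0} of $E_0\subseteq E_{\circ}$, I get $\supp(f)\cup\{1\}\subseteq\supp(G_{\circ})\cup\B^2$ and, crucially, that no word $w$ has both a decomposition $u^{\star}v$ with $u,v$ in a common clique and a decomposition $u'^{\star}v'$ with $u',v'$ in distinct cliques (the former forces $w\in\supp(G_{\circ})\cup\B^2$, the latter forces $w\notin\supp(G_{\circ})\cup\B^2$). Then, given any feasible $\y$ of $(\textrm{EP}^{\circ})$, I would define $\tilde\y$ by $\tilde y_w=y_w$ for $w\in\supp(G_{\circ})\cup\B^2$ and $\tilde y_w=0$ otherwise; the dichotomy above makes $M_{\B}(\tilde\y)$ exactly the block-diagonal truncation of $M_{\B}(\y)$, which is PSD since each of its blocks equals $P_{C_l}(B_{G_{\circ}}\circ M_{\B}(\y))P_{C_l}^{T}\succeq0$ by Theorem~\ref{sec2-thm2}. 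As $\supp(f)\subseteq\supp(G_{\circ})\cup\B^2$ we have $L_{\tilde\y}(f)=L_{\y}(f)$ and $\tilde y_1=1$, so $\tilde\y$ is feasible for \eqref{upop-eigen1}; hence $\lambda_{\min}(f)\le L_{\y}(f)$ for every feasible $\y$ of $(\textrm{EP}^{\circ})$, i.e.\ $\lambda_{\min}(f)\le\lambda_{\circ}(f)$. (Equivalently, on the dual side: if $Q$ is a PSD Gram matrix of $f-\lambda$, then so is $\Pi_{G_{\circ}}(Q)$, and it lies in $\S_{G_{\circ}}$.)

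The step I expect to be the crux is the consistency claim in part~(iii): a priori, truncating a moment matrix to its diagonal blocks changes the polynomial it represents, because an off-block entry $y_{u^{\star}v}$ may be pinned down by a within-block occurrence of the word $u^{\star}v$. What rules this out is exactly the fixed-point identity $\SE(G_{\circ})=G_{\circ}$, and this is also where the hypothesis that the \emph{maximal} chordal extension is used enters: it is what makes the cliques pairwise disjoint (so the truncated matrix stays PSD) and what forces $G_{\circ}$ to be a genuine fixed point of the support-extension map. All remaining ingredients are either textbook SDP duality or monotone bookkeeping with Theorems~\ref{sec2-thm} and~\ref{sec2-thm2}.
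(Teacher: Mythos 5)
Your proposal is correct and follows essentially the same route as the paper's proof: a Slater point for $(\textrm{EP}^k)$ obtained by projecting a strictly feasible moment matrix of $(\textrm{EP})$ for part (i), monotone inclusion of graphs plus Theorem~\ref{sec2-thm2} for part (ii), and for part (iii) the zero-extension of a feasible $\y$ of $(\textrm{EP}^{\circ})$ whose moment matrix becomes the block-diagonal truncation $B_{G_{\circ}}\circ M_{\B}(\y)\succeq 0$. The only differences are presentational: you construct the strictly feasible point explicitly rather than citing it, and you make explicit the fixed-point dichotomy $\SE(G_{\circ})=G_{\circ}$ that justifies the paper's ``by construction, $M_{\B}(\overline{\y}^{*})=B_{G_{\circ}}\circ M_{\B}(\y^{*})$'' step --- a worthwhile clarification, but not a different argument.
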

\begin{proof}
(i). Note that the SDP problem $(\textrm{EP})$ has a Slater's point, i.e., a strictly feasible solution (see, e.g., Proposition 4.9 in \cite{burgdorf}), say $M_{\B}(\y^*)$.
Since each block of $\Pi_{G_k}(M_{\B}(\y^*))$ is a principal submatrix of $M_{\B}(\y)$, we have that $\Pi_{G_k}(M_{\B}(\y^*))$ is a Slater's point of $(\textrm{EP}^k)$ by Theorem \ref{sec2-thm2}. So by the duality theory of convex programming, there is no duality gap between $(\textrm{EP}^k)$ and $(\textrm{EP}^k)^*$.

(ii). Because $G_{k}\subseteq G_{k+1}$, each maximal clique of $G_k$ is a subset of some maximal clique of $G_{k+1}$. Thus by Theorem \ref{sec2-thm2}, we have that $(\textrm{EP}^k)$ is a relaxation of $(\textrm{EP}^{k+1})$ (and also a relaxation of $(\textrm{EP})$). This yields the desired conclusions.

(iii). Let $\y^{*}=(y^{*}_{w})$ be an arbitrary feasible solution of $(\textrm{EP}^{\circ})$. Note that $\{y_{w}\mid w\in\supp(G_{\circ})\cup\B^2\}$ is the set of decision variables involved in $(\textrm{EP}^{\circ})$ and $\{y_{w}\mid w\in\B^{\star}\B\}$ is the set of decision variables involved in $(\textrm{EP})$.
We then define a vector $\overline{\y}^{*}=(\overline{y}^{*}_{w})_{w\in\B^{\star}\B}$ as follows:
$$\overline{y}_{w}^{*}=\begin{cases}y_{w}^{*},\quad\textrm{ if }w\in\supp(G_{\circ})\cup\B^2,\\
0,\quad\quad\textrm{otherwise}.
\end{cases}$$
If the maximal chordal extension is used in \eqref{sec3-graph1}, then matrices in $\Pi_{G_k}(\S^{r}_+)$ for all $k\ge1$ are block-diagonal (up to permutation). As a consequence, $B_{G_{k}}\circ M_{\B}(\y)\in\Pi_{G_k}(\S^{r}_+)$ implies $B_{G_{k}}\circ M_{\B}(\y)\succeq0$. By construction, we have $M_{\B}(\overline{\y}^*)=B_{G_{\circ}}\circ M_{\B}(\y^*)\succeq0$. Therefore $\overline{\y}^{*}$ is a feasible solution of $(\textrm{EP})$ and hence $L_{\y^{*}}(f)=L_{\overline{\y}^{*}}(f)\ge\lambda_{\min}(f)$.
This yields $\lambda_{\circ}(f)\ge\lambda_{\min}(f)$ since $\y^{*}$ is an arbitrary feasible solution of $(\textrm{EP}^{\circ})$. 
By (ii), we already have $\lambda_{\circ}(f)\le\lambda_{\min}(f)$. Therefore, $\lambda_{\circ}(f)=\lambda_{\min}(f)$.
\end{proof}

If (approximately) minimum chordal extensions are used in \eqref{sec3-graph1}, the sequence $(\lambda_k(f))_k$ doesn't necessarily converge to $\lambda_{\min}(f)$. The following is an example.
\begin{example}\label{ex1}
Consider the nc polynomial $f=X^2-XY-YX+3Y^2-2XYX+2XY^2X-YZ-ZY+6Z^2+9X^2Y+9Z^2Y-54ZYZ+142ZY^2Z$ (\cite{klep2019sparse}). The monomial basis given by the Newton chip method is $\{1,X,Y,Z,YX,YZ\}$. We have $E_0=\{\{1,YX\},\{1,YZ\},\{X,YX\},\{X,Y\},\{Y,Z\},\{Y,YZ\},\{Z,YZ\}\}$.
Figure \ref{tsp} shows the tsp graph $G_0$ (without dashed edges) and its chordal extension $G_1$ (with dashed edges) for $f$. The graph sequence $(G_k)_{k\ge1}$ immediately stabilizes at $k=1$. Solving the SDP problem ($\textrm{EP}^1$) associated with $G_1$, we obtain $\lambda_1(f)\approx-0.00355$ while we have $\lambda_{\min}(f)=0$. 

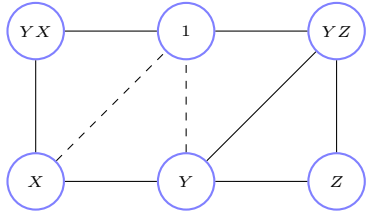
\begin{figure}[htbp]
\caption{The tsp graph $G_0$ and its chordal extension $G_1$ for Example \ref{ex1}}\label{tsp}
{\tiny
\begin{center}
\begin{tikzpicture}[every node/.style={circle, draw=blue!50, thick, minimum size=7.5mm}]
\node (n2) at (0,0) {$X$};
\node (n3) at (2,0) {$Y$};
\node (n4) at (4,0) {$Z$};
\node (n5) at (0,2) {$YX$};
\node (n1) at (2,2) {$1$};
\node (n6) at (4,2) {$YZ$};
\draw (n2)--(n3);
\draw (n3)--(n4);
\draw (n6)--(n4);
\draw (n1)--(n6);
\draw (n1)--(n5);
\draw (n2)--(n5);
\draw (n3)--(n6);
\draw[dashed] (n2)--(n1);
\draw[dashed] (n3)--(n1);
\end{tikzpicture}
\end{center}}
\end{figure}
\end{example}

The next result states that $\lambda_1(f)=\lambda_{\min}(f)$ always holds for a quadratic $f$.
\begin{theorem}\label{sec3-thm2}
Suppose that the nc polynomial $f\in\Sym\,\R\langle\underline{X}\rangle$ in ($\textrm{EP}_0$) is quadratic, i.e., $\deg(f)=2$. Then $\lambda_1(f)=\lambda_{\min}(f)$.
\end{theorem}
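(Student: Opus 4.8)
The plan is to show that when $\deg(f)=2$, the monomial basis produced by the Newton chip method is contained in $\{1,X_1,\dots,X_n\}$, so the node set $V=\B$ has at most $n+1$ elements, each of degree $\le 1$. The crucial consequence is that every product $u^{\star}v$ with $u,v\in\B$ already has degree $\le 2$, hence $\B^{\star}\B\subseteq\W_2$. First I would record this reduction and observe that the tsp graph $G_0$ from \eqref{e0} therefore has an edge $\{u,v\}$ exactly when $u^{\star}v\in\A\cup\B^2$, and that $\supp(G_0)\cup\B^2\supseteq\A$ already contains the full support of $f$.

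The key step is to argue that the support-extension operation $\SE$ is vacuous on $G_0$ in this low-degree situation, so that $G_1=\overline{\SE(G_0)}=\overline{G_0}$ and the only enlargement is the chordal extension. Concretely, since each $u^{\star}v$ lies in $\W_2$, the set $\supp(G_0)\cup\B^2$ is already ``closed'' in the sense needed: adding an edge to make the pattern chordal does not create any new monomial obligations because all monomials of the relevant degree are already accounted for in the constraints of $(\textrm{EP}^1)^*$, which run over $w\in\supp(G_1)\cup\B^2\subseteq\W_2$. I would then compare $(\textrm{EP}^1)^*$ with the dense dual $(\textrm{EP})^*$ in \eqref{upop-eigen2}: the only difference is that $(\textrm{EP}^1)^*$ imposes $Q\in\S_+^r\cap\S_{G_1}$ rather than merely $Q\succeq 0$, and restricts the equality constraints to $w\in\supp(G_1)\cup\B^2$ instead of all $w\in\B^{\star}\B$. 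The content of the theorem is that this restriction costs nothing.

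To close the gap I would work on the moment side. Let $\y^*$ be an optimal solution of $(\textrm{EP}^1)$, so $B_{G_1}\circ M_{\B}(\y^*)\in\Pi_{G_1}(\S_+^r)$ with $y_1^*=1$. By Theorem \ref{sec2-thm2}, the principal submatrix of $B_{G_1}\circ M_{\B}(\y^*)$ indexed by each maximal clique of $G_1$ is PSD, and since $G_1$ is chordal this matrix admits a PSD completion $\widetilde{M}$. The point is that $\widetilde{M}$ agrees with $M_{\B}(\y^*)$ on all entries $(u,v)$ with $\{u,v\}\in E(G_1)$ or $u=v$, i.e.\ on all entries indexed by $w=u^{\star}v\in\supp(G_1)\cup\B^2$; the remaining entries are free completion entries. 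Defining $\overline{\y}$ by $\overline{y}_w=\widetilde{M}_{uv}$ (well-defined because for $\deg(f)=2$ the missing words $w\in\B^{\star}\B\setminus(\supp(G_1)\cup\B^2)$ are of degree exactly $2$ and are not in $\A$, so they do not appear in the objective $L_{\y}(f)$), one obtains a feasible solution of $(\textrm{EP})$ with the same objective value $L_{\overline{\y}}(f)=L_{\y^*}(f)=\lambda_1(f)$. Hence $\lambda_{\min}(f)\le\lambda_1(f)$, and combined with $\lambda_1(f)\le\lambda_{\min}(f)$ from Theorem \ref{sec3-thm}(ii) we conclude $\lambda_1(f)=\lambda_{\min}(f)$.

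The main obstacle I anticipate is the well-definedness and consistency of the PSD completion argument: one must check carefully that distinct pairs $(u,v)$ and $(u',v')$ with $u^{\star}v=u'^{\star}v'$ (which can happen, e.g.\ $1^{\star}X_i=X_i^{\star}1$) receive the same value under $\widetilde{M}$ — this is automatic for the clique-restricted entries since $M_{\B}(\y^*)$ is itself a genuine moment matrix, but needs a word about the completion entries — and, more delicately, that no word $w$ appearing in $\A$ is ever among the ``missing'' entries. The latter is exactly where $\deg(f)=2$ is used: every $w\in\A$ has $\deg(w)\le 2$, and the Newton chip basis guarantees such $w$ factors as $u^{\star}v$ with $u,v\in\B$ of degree $\le 1$; one must verify the corresponding edge $\{u,v\}$ is forced into $E_0$ (hence into $E_1$) by \eqref{e0}. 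Handling the borderline case $w\in\A$ with $\deg(w)=2$ of the form $X_iX_j$ versus the ``monomial square'' set $\B^2$ is the one spot requiring genuine care.
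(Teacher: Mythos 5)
Your proof is correct, but it runs on the opposite side of the duality from the paper's. The paper argues entirely on the Gram-matrix (dual) side: taking $\B=\{1,X_1,\ldots,X_n\}$, it observes that for a quadratic $f$ \emph{every} PSD Gram matrix $Q=[q_{ij}]$ automatically lies in $\S_{G_0}$, because a non-edge $\{X_i,X_j\}$ forces $X_iX_j,X_jX_i\notin\A$ while the word $X_iX_j$ is produced by the single entry $q_{ij}$ of $\W_1^{\star}Q\W_1$, whence $q_{ij}=0$ (and similarly $q_{0j}=0$ when $X_j\notin\A$); thus the sparsity constraint $Q\in\S_+^{n+1}\cap\S_{G_1}$ in $(\textrm{EP}^1)^*$ is vacuous, the two dual values coincide, and the conclusion follows from the absence of a duality gap (Theorem~\ref{sec3-thm}(i)). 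You instead work on the moment (primal) side, completing $B_{G_1}\circ M_{\B}(\y^*)$ to a PSD matrix via Theorem~\ref{sec2-thm2} and reading off a feasible point of $(\textrm{EP})$ with the same objective value; this avoids invoking duality but requires the consistency check you flag. That check does go through here: for the degree-one basis the map $(u,v)\mapsto u^{\star}v$ identifies only mirror pairs $(u,v)$ and $(v,u)$, which receive equal values by symmetry of the completion, so $\overline{\y}$ is well defined and $M_{\B}(\overline{\y})$ is exactly the completion; moreover every $w\in\A$ is either $1$, some $X_j$ (giving the edge $\{1,X_j\}\in E_0$), some $X_iX_j$ with $i\ne j$ (giving the edge $\{X_i,X_j\}\in E_0$), or some $X_i^2\in\B^2$, so no word of $\A$ is a free completion entry and the objective is preserved. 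Two small remarks: phrase the last step with an arbitrary feasible (or $\epsilon$-optimal) solution of $(\textrm{EP}^1)$ rather than an optimal one, since attainment is not guaranteed; and note that the paper's route proves slightly more, namely exactness already for the pattern $G_0$ before chordal extension, whereas your completion argument genuinely needs the chordality of $G_1$.
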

\begin{proof}
Assume $\supp(f)=\A$. Since $f$ is quadratic, we may take $\B=\{1,X_1,\ldots,X_n\}$ as a monomial basis. Let $G_0$ be the tsp graph associated with $f$.
We only need to prove that if $f$ admits a PSD Gram matrix, then $f$ admits a Gram matrix in $\S_+^{n+1}\cap\S_{G_0}$.
Suppose that $Q=[q_{ij}]_{i,j=0}^n$ is a PSD Gram matrix for $f$ indexed by $\B$. Note that for $i,j>0$, if $\{X_i,X_j\}\not\in E(G_0)$, then we must have $X_iX_j,X_jX_i\notin\A$, which implies $q_{ij}=0$; for $i=0,j>0$, if $\{1,X_j\}\not\in E(G_0)$, then we must have $X_j\notin\A$, which implies $q_{0j}=q_{j0}=0$. It follows that $Q\in\S_{G_0}$ as desired.
\end{proof}

\subsection{The constrained case}\label{sec3-con}
In this subsection, we generalize the iterative procedure in Section~\ref{ueo} to the constrained case and we show how to iteratively exploit term sparsity for the moment-SOHS hierarchy \eqref{cpop-eigen1}-\eqref{cpop-eigen2} of the constrained NCPOP $(\textrm{EQ}_0)$ defined in \eqref{cpop-eigen}. 

Assume that $f=\sum_{w}a_{w}w\in\Sym\,\R\langle\underline{X}\rangle$ and $S=\{g_1,\ldots,g_m\}\subseteq\Sym\,\R\langle\underline{X}\rangle$. Let
\begin{equation}\label{csupp}
\A = \supp(f)\cup\bigcup_{j=1}^m\supp(g_j).
\end{equation}
As in Section~\ref{sec2-eo}, we set $g_0:=1$ and let $d_j=\lceil\deg(g_j)/2\rceil$, $j\in\{0\}\cup[m]$ and $d=\max\{\lceil\deg(f)/2\rceil,d_1,\ldots,d_m\}$. Fixing a relaxation order $\hat{d}\ge d$, we define a graph $G_{\hat{d}}^{\textrm{tsp}}(V_{\hat{d}}^{\textrm{tsp}},E_{\hat{d}}^{\textrm{tsp}})$ with $V_{\hat{d}}^{\textrm{tsp}}=\W_{\hat{d}}$ and
\begin{equation}\label{sec3-eq1}
E_{\hat{d}}^{\textrm{tsp}}=\{\{u,v\}\mid(u,v)\in \W_{\hat{d}}\times \W_{\hat{d}},\,u\ne v,\,u^{\star}v\in\A\cup\W_{\hat{d}}^2\},
\end{equation}
where $\W_{\hat{d}}^2:=\{u^{\star}u\mid u\in\W_{\hat{d}}\}$. We call $G_{\hat{d}}^{\textrm{tsp}}$ the {\em term sparsity pattern (tsp) graph} associated with $\A$ (or $f$ and $S$). 

For a graph $G(V,E)$ with $V\subseteq\langle\underline{X}\rangle$ and $g\in\R\langle\underline{X}\rangle$, let us define
\begin{equation}
    \supp_{g}(G):=\{u^{\star}wv\mid(u,v)\in V\times V,\,\{u,v\}\in E, w\in\supp(g)\}.
\end{equation}
Let $G_{\hat{d},0}^{(0)}=G_{\hat{d}}^{\textrm{tsp}}$ and $G_{\hat{d},j}^{(0)}$ be an empty graph for $j\in[m]$. Then we recursively define a sequence of graphs $(G_{\hat{d},j}^{(k)}(V_{\hat{d},j},E_{\hat{d},j}^{(k)}))_{k\ge1}$ with $V_{\hat{d},j}=\W_{\hat{d}-d_j}$ for each $j\in\{0\}\cup[m]$ via two successive steps:\\
1) {\bf support extension}: Define $F_{\hat{d},j}^{(k)}$ to be the graph with $V(F_{\hat{d},j}^{(k)})=\W_{\hat{d}-d_j}$ and
\begin{align}\label{sec3-eq2}
E(F_{\hat{d},j}^{(k)})=&\{\{u,v\}\mid(u,v)\in\W_{\hat{d}-d_j}\times \W_{\hat{d}-d_j},\,u\ne v,\\
&u^{\star}\supp(g_j)v\cap(\bigcup_{j=0}^m\supp_{g_j}(G_{\hat{d},j}^{(k-1)})\cup\W_{\hat{d}}^2)\ne\emptyset\}.\notag
\end{align}
2) {\bf chordal extension}: Let
\begin{equation}\label{sec3-graph}
    G_{\hat{d},j}^{(k)}:=\overline{F_{\hat{d},j}^{(k)}}.
\end{equation}

Let $r_j=|\W_{\hat{d}-d_j}|$ for $j\in\{0\}\cup[m]$.
Then by replacing $M_{\hat{d}-d_j}(g_j\y)\succeq0$ with the weaker condition $B_{G_{\hat{d},j}^{(k)}}\circ M_{\hat{d}-d_j}(g_j\y)\in\Pi_{G_{\hat{d},j}^{(k)}}(\S_+^{r_j})$ for $j\in\{0\}\cup[m]$ in \eqref{cpop-eigen1}, we obtain the following series of sparse moment relaxations for ($\textrm{EQ}_{\hat{d}}$) indexed by $k\ge1$:
\begin{equation}\label{cpop-seigen1}
(\textrm{EQ}_{\hat{d},k}^{\textrm{ts}}):\quad
\begin{array}{rll}
\lambda^{\textrm{ts}}_{\hat{d},k}(f, S):=&\inf &L_{\y}(f)\\
&\textrm{s.t.}&B_{G_{\hat{d},0}^{(k)}}\circ M_{\hat{d}}(\y)\in\Pi_{G_{\hat{d},0}^{(k)}}(\S_+^{r_0}),\\
&&B_{G_{\hat{d},j}^{(k)}}\circ M_{\hat{d}-d_j}(g_j\y)\in\Pi_{G_{\hat{d},j}^{(k)}}(\S_+^{r_j}),\quad j\in[m],\\
&&y_{1}=1.
\end{array}
\end{equation}
We call $k$ the {\em sparse order}. By construction, one has $G_{\hat{d},j}^{(k)}\subseteq G_{\hat{d},j}^{(k+1)}$ for all $j,k$. Therefore, for every $j$, the sequence of graphs
$(G_{\hat{d},j}^{(k)})_{k\ge1}$ stabilizes after a finite number of steps. We denote the stabilized graphs by $G_{\hat{d},j}^{(\circ)}$ for all $j$ and denote the corresponding moment relaxation by $(\textrm{EQ}_{\hat{d},\circ}^{})$ (with optimum $\lambda_{\hat{d},\circ}^{\textrm{ts}}(f, S)$).

For each $k\ge1$, the dual of $(\textrm{EQ}_{\hat{d},k}^{\textrm{ts}})$ reads as:
\begin{equation}\label{cpop-seigen2}
(\textrm{EQ}_{\hat{d},k}^{\textrm{ts}})^*:
\begin{array}{ll}
\sup&\lambda\\
\textrm{s.t.}&\sum_{j=0}^m\langle Q_j,D_{w}^j\rangle+\lambda\delta_{1w}=a_{w},\forall w\in\bigcup_{j=0}^m\supp_{g_j}(G_{\hat{d},j}^{(k)}))\cup\W^2_{\hat{d}},\\
&Q_j\in\S_+^{r_j}\cap\S_{G_{\hat{d},j}^{(k)}},\quad j\in\{0\}\cup[m],
\end{array}
\end{equation}
where $\{D_{w}^j\}_{j,w}$ is defined in Section~\ref{sec2-eo}.


\begin{theorem}\label{cts-thm1}
Let $\{f\}\cup S\subseteq\Sym\,\R\langle\underline{X}\rangle$. Then the followings hold:
\begin{enumerate}[(i)]
    \item Assume that $S$ is feasible and contains a nc polynomial $g_1=R^2-\sum_{i=1}^nX_i^2$ for some $R>0$. Then for all $\hat{d},k$, there is no duality gap between $(\textrm{EQ}_{\hat{d},k}^{\textrm{ts}})$ and $(\textrm{EQ}_{\hat{d},k}^{\textrm{ts}})^*$.
    \item Fixing a relaxation order $\hat{d}\ge d$, the sequence $(\lambda^{\textrm{ts}}_{\hat{d},k}(f, S))_{k\ge1}$ is monotone nondecreasing and $\lambda^{\textrm{ts}}_{\hat{d},k}(f, S)\le\lambda_{\hat{d}}(f, S)$ for all $k$ (with $\lambda_{\hat{d}}(f, S)$ defined in \eqref{cpop-eigen1}).
    \item Fixing a sparse order $k\ge1$, the sequence $(\lambda^{\textrm{ts}}_{\hat{d},k}(f, S))_{\hat{d}\ge d}$ is monotone nondecreasing.
    \item If the maximal chordal extension is used in \eqref{sec3-graph}, then $(\lambda^{\textrm{ts}}_{\hat{d},k}(f, S))_{k\ge 1}$ converges to $\lambda_{\hat{d}}(f, S)$ in finitely many steps, i.e., $\lambda_{\hat{d},\circ}^{\textrm{ts}}(f, S)=\lambda_{\hat{d}}(f, S)$.
\end{enumerate}
\end{theorem}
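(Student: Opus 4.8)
\textbf{Proof plan for Theorem \ref{cts-thm1}.}

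The plan is to treat the four items in order, since items (i)--(iii) are structural facts about the SDP hierarchy that follow essentially by the same reasoning as in the unconstrained case (Theorem \ref{sec3-thm}), while item (iv) is the real content. For item (i), I would invoke the existence of a strictly feasible (Slater) point of the dense relaxation $(\textrm{EQ}_{\hat d})$: the assumption that $S$ contains a ball constraint $g_1 = R^2 - \sum_i X_i^2$ guarantees the truncated quadratic module is Archimedean, so the moment SDP $(\textrm{EQ}_{\hat d})$ admits an interior point $\y^*$ with all moment and localizing matrices strictly positive definite (this is standard; cf.\ Proposition 4.9 in \cite{burgdorf} and the discussion after \eqref{cpop-eigen1}). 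Projecting $\y^*$ onto the coordinates appearing in $(\textrm{EQ}_{\hat d,k}^{\textrm{ts}})$, each block $P_{C}(B_{G}\circ M)(P_C)^T$ is a principal submatrix of a positive definite matrix, hence positive definite; by Theorem \ref{sec2-thm2} this gives a Slater point of $(\textrm{EQ}_{\hat d,k}^{\textrm{ts}})$, and strong duality follows from convex duality. For item (ii) I would use the inclusion $G_{\hat d,j}^{(k)}\subseteq G_{\hat d,j}^{(k+1)}$ (which holds by construction together with assumption \eqref{assum}): every maximal clique of $G_{\hat d,j}^{(k)}$ sits inside a maximal clique of $G_{\hat d,j}^{(k+1)}$, so by Theorem \ref{sec2-thm2} feasibility for order $k+1$ implies feasibility for order $k$, i.e.\ $(\textrm{EQ}_{\hat d,k}^{\textrm{ts}})$ relaxes $(\textrm{EQ}_{\hat d,k+1}^{\textrm{ts}})$ and $(\textrm{EQ}_{\hat d})$; monotonicity and the upper bound $\lambda_{\hat d}(f,S)$ follow. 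Item (iii) is the analogous monotonicity in $\hat d$: I would exhibit, given a feasible $\y$ for $(\textrm{EQ}_{\hat d+1,k}^{\textrm{ts}})$, the restriction of $\y$ to words of degree $\le 2\hat d$ as a feasible point of $(\textrm{EQ}_{\hat d,k}^{\textrm{ts}})$ with the same objective value, using that the tsp graph at order $\hat d$ is (after the extension operations) the subgraph of the one at order $\hat d+1$ induced on $\W_{\hat d -d_j}$, again with \eqref{assum}.

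The substantive part is item (iv). Here I would mimic the proof of Theorem \ref{sec3-thm}(iii): let $\y^*$ be an arbitrary feasible solution of the stabilized relaxation $(\textrm{EQ}_{\hat d,\circ}^{\textrm{ts}})$ and construct an extension $\overline{\y}^*$ on the full index set $\W_{2\hat d}$ by setting $\overline{y}_w^* = y_w^*$ for $w$ in $\bigcup_{j}\supp_{g_j}(G_{\hat d,j}^{(\circ)})\cup \W_{\hat d}^2$ and $\overline{y}_w^* = 0$ otherwise. The key point is that when the maximal chordal extension is used, every $\Pi_{G_{\hat d,j}^{(\circ)}}(\S_+^{r_j})$ consists of matrices that are block diagonal up to permutation (there is no overlap among maximal cliques of a maximal chordal extension), so the PSD-completion constraint $B_{G_{\hat d,j}^{(\circ)}}\circ M \in \Pi_{G_{\hat d,j}^{(\circ)}}(\S_+^{r_j})$ is equivalent to $B_{G_{\hat d,j}^{(\circ)}}\circ M \succeq 0$. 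The crucial verification is that $M_{\hat d}(\overline{\y}^*) = B_{G_{\hat d,0}^{(\circ)}}\circ M_{\hat d}(\y^*)$ and, for each $j\in[m]$, $M_{\hat d -d_j}(g_j\overline{\y}^*) = B_{G_{\hat d,j}^{(\circ)}}\circ M_{\hat d -d_j}(g_j\y^*)$; both are then PSD, so $\overline{\y}^*$ is feasible for $(\textrm{EQ}_{\hat d})$ with $L_{\overline{\y}^*}(f) = L_{\y^*}(f)$, giving $L_{\y^*}(f)\ge \lambda_{\hat d}(f,S)$ and hence $\lambda_{\hat d,\circ}^{\textrm{ts}}(f,S)\ge \lambda_{\hat d}(f,S)$; combined with (ii) this yields equality.

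The main obstacle is the second displayed identity above: showing that zeroing out all moments $\overline{y}_w^*$ with $w\notin \bigcup_j\supp_{g_j}(G_{\hat d,j}^{(\circ)})\cup\W_{\hat d}^2$ leaves each \emph{localizing} matrix $M_{\hat d -d_j}(g_j\overline{\y}^*)$ unchanged as a $B_{G_{\hat d,j}^{(\circ)}}$-masked matrix. This is precisely what the support-extension step \eqref{sec3-eq2} is designed to guarantee: for $\{u,v\}\notin E(G_{\hat d,j}^{(\circ)})$ one needs every word $u^\star w v$ with $w\in\supp(g_j)$ to lie outside the "active" support set (so that the $(u,v)$ entry of the localizing matrix is already $0$ before masking), and this follows because stabilization means $F_{\hat d,j}^{(\circ)} = G_{\hat d,j}^{(\circ)}$ after chordal extension, so any such edge would have been added by \eqref{sec3-eq2}. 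I would spell out this support-tracking argument carefully — it is the one place where the constrained case genuinely differs from the unconstrained Theorem \ref{sec3-thm}, and where the definition of $\supp_g(G)$ and the fixed-point property of the stabilized graphs must be used in tandem.
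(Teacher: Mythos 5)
Your treatment of items (ii)--(iv) matches the paper's proof: (ii) and (iii) rest on the graph inclusions $G_{\hat d,j}^{(k)}\subseteq G_{\hat d,j}^{(k+1)}$ and $G_{\hat d,j}^{(k)}\subseteq G_{\hat d+1,j}^{(k)}$ (the latter proved by induction on $k$ using \eqref{assum}) together with Theorem \ref{sec2-thm2}, and your item (iv) is exactly the paper's zero-extension argument; your closing paragraph correctly isolates the one point the paper leaves implicit, namely that stabilization of \eqref{sec3-eq2} forces the $(u,v)$ entry of $M_{\hat d-d_j}(g_j\overline{\y}^*)$ to vanish for $\{u,v\}\notin E(G_{\hat d,j}^{(\circ)})$, and your support-tracking justification of this is sound.

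Item (i), however, has a genuine gap. You assert that the dense constrained relaxation $(\textrm{EQ}_{\hat d})$ admits a point $\y^*$ with all moment \emph{and localizing} matrices strictly positive definite, calling this standard. It is not: Proposition 4.9 of \cite{burgdorf} gives strict feasibility only for the unconstrained moment matrix, which is why the paper can use the Slater/projection argument in Theorem \ref{sec3-thm}(i) but does not reuse it here. In the constrained case the localizing matrices can be forced to be singular on the entire feasible set --- e.g.\ if $S$ contains both $g$ and $-g$ (an equality constraint), every feasible $\y$ satisfies $M_{\hat d-d_j}(g\y)=0$ --- so no Slater point exists, yet the hypotheses of (i) (feasibility plus the ball constraint) are perfectly compatible with this situation. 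The paper instead follows Josz--Henrion \cite{josz2016strong}: it uses the ball constraint $g_1=R^2-\sum_i X_i^2$ to derive the trace recursion $\Tr(M_t(\y))\le R^2\Tr(M_{t-1}(\y))+1$, concludes that the feasible set of the \emph{primal} sparse relaxation is nonempty and bounded (bounding $\sqrt{\sum_w y_w^2}$ via $\Tr(M_{\hat d}(\y))\le\sum_{t=0}^{\hat d}R^{2t}$ and the PSD-completion property), and then invokes the closedness/no-gap argument of \cite[Theorem~1]{josz2016strong}, which requires no interior point. You would need to replace your Slater argument by this boundedness argument (or add a strict-feasibility hypothesis that the theorem does not assume) for (i) to hold as stated.
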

\begin{proof}
(i). The proof proceeds in a similar manner as \cite{josz2016strong}.
First note that $(\textrm{EQ}_{\hat{d},k}^{\textrm{ts}})$ is feasible by considering the moments of the Dirac measure centred on some feasible point of $S$. Let $\CC=\bigcup_{i=0}^m\supp_{g_j}(G_{\hat{d},j}^{(k)})\cup\W_{\hat{d}}^2$. Consider a feasible solution $(y_{w})_{w\in\CC}$ of $(\textrm{EQ}_{\hat{d},k}^{\textrm{ts}})$ and extend it to $\y=(y_{w})_{w\in\W_{2\hat{d}}}$ by defining $y_w=0$ for $w\notin\CC$. 
Let $t\in\N$ be such that $1\le t\le\hat{d}$. Writing $g_1=\sum_{u}g_{1,u}u$, we have
\begin{align*}
    \Tr(M_{t-1}(g_1\y))&=\sum_{w\in\W_{t-1}}\sum_{u}g_{1,u}y_{w^{\star}uw}\\
    &=\sum_{w\in\W_{t-1}}(g_{1,1}y_{w^{\star}1w}+\sum_{i=1}^ng_{1,X_i^2}y_{w^{\star}X_i^2w})\\
    &=R^2\sum_{w\in\W_{t-1}}y_{w^{\star}w}-\sum_{w\in\W_{t-1}}\sum_{i=1}^ny_{w^{\star}X_i^2w}\\
    &=R^2\Tr(M_{t-1}(\y))+1-\Tr(M_{t}(\y)).
\end{align*}
Because $\Tr(M_{t-1}(g_1\y))\ge0$, we obtain $\Tr(M_{t}(\y))\le R^2\Tr(M_{t-1}(\y))+1$ and it follows $\Tr(M_{\hat{d}}(\y))\le\sum_{t=0}^{\hat{d}}R^{2t}$. Since $B_{G_{\hat{d},0}^{(k)}}\circ M_{\hat{d}}(\y)\in\Pi_{G_{\hat{d},0}^{(k)}}(\S_+^{r_0})$, there exists a PSD matrix $P\in\S_+^{r_0}$ such that $B_{G_{\hat{d},0}^{(k)}}\circ M_{\hat{d}}(\y)=B_{G_{\hat{d},0}^{(k)}}\circ P$. We have $\Tr((B_{G_{\hat{d},0}^{(k)}}\circ M_{\hat{d}}(\y))^2)\le\Tr(P^2)\le\Tr(P)^2=\Tr(M_{\hat{d}}(\y))^2$. From this we deduce that $$\sqrt{\sum_{w\in\CC}y_w^2}\le\sqrt{\Tr((B_{G_{\hat{d},0}^{(k)}}\circ M_{\hat{d}}(\y))^2)}\le\Tr(M_{\hat{d}}(\y))\le\sum_{t=0}^{\hat{d}}R^{2t}.$$ Then the conclusion follows from the same argument as for Theorem 1 in \cite{josz2016strong}.

(ii). For all $j,k$, because $G_{\hat{d},j}^{(k)}\subseteq G_{\hat{d},j}^{(k+1)}$, each maximal clique of $G_{\hat{d},j}^{(k)}$ is a subset of some maximal clique of $G_{\hat{d},j}^{(k+1)}$. Hence by Theorem \ref{sec2-thm2}, $(\textrm{EQ}_{\hat{d},k}^{\textrm{ts}})$ is a relaxation of $(\textrm{EQ}_{\hat{d},k+1}^{\textrm{ts}})$ (and also a relaxation of $(\textrm{EQ}_{\hat{d}})$). Therefore, $(\lambda^{\textrm{ts}}_{\hat{d},k}(f, S))_{k\ge1}$ is monotone nondecreasing and $\lambda^{\textrm{ts}}_{\hat{d},k}(f, S)\le\lambda_{\hat{d}}(f, S)$ for all $k$.

(iii). The conclusion follows if we can show that $G_{\hat{d},j}^{(k)}\subseteq G_{\hat{d}+1,j}^{(k)}$ for all $\hat{d},j$ since by Theorem \ref{sec2-thm2} this implies that $(\textrm{EQ}_{\hat{d},k}^{\textrm{ts}})$ is a relaxation of $(\textrm{EQ}_{\hat{d}+1,k}^{\textrm{ts}})$. Let us prove $G_{\hat{d},j}^{(k)}\subseteq G_{\hat{d}+1,j}^{(k)}$ by induction on $k$. For $k=1$, from $\eqref{sec3-eq1}$, we have $E_{\hat{d},0}^{(0)}\subseteq E_{\hat{d}+1,0}^{(0)}$, which implies that $G_{\hat{d},j}^{(1)}\subseteq G_{\hat{d}+1,j}^{(1)}$ for all $\hat{d},j$. Now assume that $G_{\hat{d},j}^{(k)}\subseteq G_{\hat{d}+1,j}^{(k)}$ for all $\hat{d},j$ hold for a given $k\ge 1$. Then from \eqref{assum}, \eqref{sec3-eq2}, \eqref{sec3-graph} and by the induction hypothesis, we have $G_{\hat{d},j}^{(k+1)}\subseteq G_{\hat{d}+1,j}^{(k+1)}$ for all $\hat{d},j$, which completes the induction and also completes the proof.

(iv). Let $\y^{*}=(y^{*}_{w})$ be an arbitrary feasible solution of $(\textrm{EQ}_{\hat{d},\circ}^{\textrm{ts}})$. Note that $\{y_{w}\mid w\in\bigcup_{i=0}^m\supp_{g_j}(G_{\hat{d},j}^{(\circ)})\cup\W_{\hat{d}}^2\}$ is the set of decision variables involved in $(\textrm{EQ}_{\hat{d},\circ}^{\textrm{ts}})$ and $\{y_{w}\mid w\in\W_{\hat{d}}^{\star}\W_{\hat{d}}\}$ is the set of decision variables involved in ($\textrm{EQ}_{\hat{d}}$).
We then define a vector $\overline{\y}^{*}=(\overline{y}^{*}_{w})_{w\in\W_{\hat{d}}^{\star}\W_{\hat{d}}}$ as follows:
$$\overline{y}_{w}^{*}=\begin{cases}y_{w}^{*},\quad\textrm{ if }w\in\bigcup_{i=0}^m\supp_{g_j}(G_{\hat{d},j}^{(\circ)})\cup\W_{\hat{d}}^2,\\
0,\quad\quad\textrm{otherwise}.
\end{cases}$$
If the maximal chordal extension is used in \eqref{sec3-graph}, then the matrices in $\Pi_{G_{\hat{d},j}^{(k)}}(\S_+^{r_j})$ for all $k\ge1$ are block-diagonal (up to permutation). As a consequence, $B_{G_{\hat{d},j}^{(k)}}\circ M_{\hat{d}-d_j}(g_j\y)\in\Pi_{G_{\hat{d},j}^{(k)}}(\S_+^{r_j})$ implies $B_{G_{\hat{d},j}^{(k)}}\circ M_{\hat{d}-d_j}(g_j\y)\succeq0$. 
By construction, we have $M_{\hat{d}-d_j}(g_j\overline{\y}^*)=B_{G_{\hat{d},j}^{(\circ)}}\circ M_{\hat{d}-d_j}(g_j\y^*)\succeq0$ for all $j\in\{0\}\cup[m]$. Therefore $\overline{\y}^{*}$ is a feasible solution of ($\textrm{EQ}_{\hat{d}}$) and hence $L_{\y^{*}}(f)=L_{\overline{\y}^{*}}(f)\ge\lambda_{\hat{d}}(f, S)$,  which yields $\lambda_{\hat{d},\circ}^{\textrm{ts}}(f, S)\ge\lambda_{\hat{d}}(f, S)$ since $\y^{*}$ is an arbitrary feasible solution of $(\textrm{EQ}_{\hat{d},\circ}^{\textrm{ts}})$. 
By (ii), we already have $\lambda_{\hat{d},\circ}^{\textrm{ts}}(f, S)\le\lambda_{\hat{d}}(f, S)$. Therefore, $\lambda_{\hat{d},\circ}^{\textrm{ts}}(f, S)=\lambda_{\hat{d}}(f, S)$.
\end{proof}

Following from Theorem \ref{cts-thm1}, we have the following two-level hierarchy of lower bounds for the optimum $\lambda_{\min}(f,S)$ of $(\textrm{EQ}_0)$:
\begin{equation}\label{cliquehierc}
\begin{matrix}
\lambda^{\textrm{ts}}_{d,1}(f, S)&\le&\lambda^{\textrm{ts}}_{d,2}(f, S)&\le&\cdots&\le&\lambda_{d}(f, S)\\
\vge&&\vge&&&&\vge\\
\lambda^{\textrm{ts}}_{d+1,1}(f, S)&\le&\lambda^{\textrm{ts}}_{d+1,2}(f, S)&\le&\cdots&\le&\lambda_{d+1}(f, S)\\
\vge&&\vge&&&&\vge\\
\vdots&&\vdots&&\vdots&&\vdots\\
\vge&&\vge&&&&\vge\\
\lambda^{\textrm{ts}}_{\hat{d},1}(f, S)&\le&\lambda^{\textrm{ts}}_{\hat{d},2}(f, S)&\le&\cdots&\le&\lambda_{\hat{d}}(f, S)\\
\vge&&\vge&&&&\vge\\
\vdots&&\vdots&&\vdots&&\vdots\\
\end{matrix}
\end{equation}
We call the array of lower bounds \eqref{cliquehierc} (and its corresponding moment-SOHS relaxations \eqref{cpop-seigen1}-\eqref{cpop-seigen2}) the NCTSSOS hierarchy associated with $(\textrm{EQ}_0)$.

\begin{remark}
The NCTSSOS hierarchy entails a trade-off between the computational cost and the quality of the obtained lower bound via the two parameters $\hat{d}$ and $k$. Besides, one has the freedom to choose a specific chordal extension for any graph involved in \eqref{sec3-graph} (e.g., the maximal chordal extension, approximately minimum chordal extension and so on). This choice affects the resulting sizes of (submatrix) blocks and the quality of the lower bound given by the corresponding SDP relaxation. Intuitively, chordal extensions with smaller clique numbers should lead to (submatrix) blocks of smaller sizes and lower bounds of (possibly) lower quality while chordal extensions with larger clique numbers should lead to (submatrix) blocks with larger sizes and lower bounds of (possibly) higher quality.
\end{remark}

\begin{example}\label{sec3-ex4}
Consider $f=2-X^2+XY^2X-Y^2$ and $S=\{4-X^2-Y^2,XY+YX-2\}$. We draw the tsp graph $G_{2,0}^{(0)}$ for $f$ and $S$ in Figure \ref{ex4-1}. Since $G_{2,0}^{(0)}$ is already a chordal graph, we don't need any chordal extension. Hence $G_{2,0}^{(1)}=G_{2,0}^{(0)}$ and $(G_{2,j}^{(k)})_{k\ge1}$ immediately stabilizes at $k=1$ for all $j$. We compute that $\lambda^{\textrm{ts}}_{2,1}(f, S)=\lambda_{\min}(f, S)=-1$.

\begin{figure}[htbp]
\caption{The tsp graph for Example \ref{sec3-ex4}}\label{ex4-1}
\begin{center}
{\tiny
\begin{tikzpicture}[every node/.style={circle, draw=blue!50, thick, minimum size=8mm}]
\node (n1) at (90:2) {$1$};
\node (n2) at (162:2) {$X^2$};
\node (n3) at (234:2) {$XY$};
\node (n4) at (306:2) {$YX$};
\node (n5) at (18:2) {$Y^2$};
\draw (n3)--(n1);
\draw (n4)--(n1);
\draw (n5)--(n1);
\draw (n1)--(n2);
\node[xshift=120] (n6) at (90:1) {$X$};
\node[xshift=120] (n7) at (270:1) {$Y$};
\draw (n6)--(n7);
\end{tikzpicture}}
\end{center}
\end{figure}
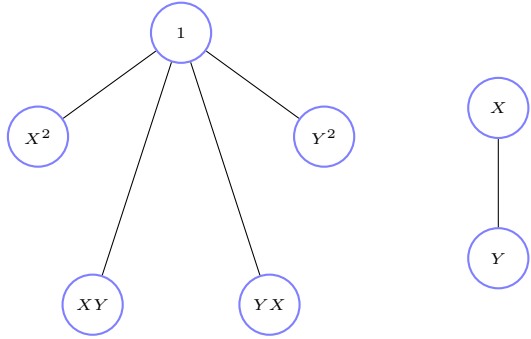 
\end{example}

\section{Eigenvalue optimization for noncommutative polynomials with combined correlative-term sparsity}\label{eo-cts}
The exploitation of term sparsity developed in the previous section can be combined with the exploitation of correlative sparsity discussed in \cite{klep2019sparse} to reduce the computational cost further. To begin with, let us recall some basics on correlative sparsity. For more details, the reader is referred to \cite{klep2019sparse}.

\subsection{Eigenvalue optimization for noncommutative polynomials with correlative sparsity}\label{cs}
As in the commutative case, the exploitation of correlative sparsity in the moment-SOHS hierarchy for NCPOPs consists of two steps: 1) partition the set of variables into subsets according to the correlations between variables emerging in the problem, and 2) construct a sparse moment-SOHS hierarchy with respect to the former partition of variables \cite{klep2019sparse}.

More concretely, assuming $f=\sum_{w}a_{w}w\in\Sym\,\R\langle\underline{X}\rangle$ and $S=\{g_1,\ldots,g_m\}\subseteq\Sym\,\R\langle\underline{X}\rangle$, we define the {\em correlative sparsity pattern (csp) graph} associated with $f$ and $S$ to be the graph $G^{\textrm{csp}}$ with nodes $V=[n]$ and edges $E$ satisfying $\{i,j\}\in E$ if one of followings holds:
\begin{enumerate}
    \item[(i)] there exists $w\in\supp(f)\textrm{ s.t. }X_i,X_j\in\var(w)$;
    \item[(ii)] there exists $k$, with $1\le k\le m, \textrm{ s.t. }X_i,X_j\in\var(g_k)$,
\end{enumerate}
where we use $\var(g)$ to denote the set of variables effectively involved in $g\in\R\langle\underline{X}\rangle$. Let $\overline{G}^{\textrm{csp}}$ be a chordal extension of $G^{\textrm{csp}}$ and $I_l,l\in[p]$ be the maximal cliques of $\overline{G}^{\textrm{csp}}$ with cardinal denoted by $n_l,l\in[p]$. Let $\R\langle\underline{X}(I_l)\rangle$ denote the ring of nc polynomials in the $n_l$ variables $\underline{X}(I_l) = \{X_i\mid i\in I_l\}$. We then partition the constraints $g_1,\ldots,g_m$ into groups $\{g_j\mid j\in J_l\}, l\in[p]$ which satisfy:
\begin{enumerate}
    \item[(i)] $J_1,\ldots,J_p\subseteq[m]$ are pairwise disjoint and $\bigcup_{l=1}^pJ_l=[m]$;
    \item[(ii)] for any $j\in J_l$, $\var(g_j)\subseteq \underline{X}(I_l)$, $l\in[p]$.
\end{enumerate}

Next, with $l\in[p]$ fixed, $d$ a positive integer and $g\in\R\langle\underline{X}(I_l)\rangle$, let $M_d(\y, I_l)$ (resp. $M_d(g\y, I_l)$)
be the moment (resp. localizing) submatrix obtained from $M_d(\y)$ (resp. $M_d(g\y)$) by retaining only those rows (and columns) indexed by $w\in\langle\underline{X}(I_l)\rangle$ of $M_d(\y)$ (resp. $M_d(g\y)$).

Then with $\hat{d}\ge d:= \max\{\lceil\deg(f)/2\rceil,\lceil\deg(g_1)/2\rceil,\ldots,\lceil\deg(g_m)/2\rceil\}$, the moment SDP relaxation for $(\textrm{EQ}_0)$ based on correlative sparsity is defined as:
\begin{equation}\label{seccs-eq1}
(\textrm{EQ}^{\textrm{cs}}_{\hat{d}}):\quad
\begin{array}{rll}
\lambda^{\textrm{cs}}_{\hat{d}}(f, S):=
&\inf &L_{\y}(f)\\
&\textrm{s.t.}&M_{\hat{d}}(\y, I_l)\succeq0,\quad l\in[p],\\
&&M_{\hat{d}-d_j}(g_j\y, I_l)\succeq0,\quad j\in J_l, l\in[p],\\
&&y_1=1.
\end{array}
\end{equation}

\begin{remark}
As shown in \cite{klep2019sparse} under some Archimedean's condition (slightly stronger than compactness), the sequence $(\lambda^{\textrm{cs}}_{\hat{d}}(f, S))_{\hat{d}\ge d}$ converges to the global optimum $\lambda_{\min}(f, S)$.
\end{remark}

\subsection{Eigenvalue optimization for noncommutative polynomials with combined correlative-term sparsity}\label{cts}
The combination of correlative sparsity and term sparsity proceeds in a similar manner as for the commutative case in \cite{wang4}.
Assume that $f=\sum_{w}a_{w}w\in\Sym\,\R\langle\underline{X}\rangle$ and $S=\{g_1,\ldots,g_m\}\subseteq\Sym\,\R\langle\underline{X}\rangle$, $G^{\textrm{csp}}$ is the csp graph associated with $f$ and $S$, and $\overline{G}^{\textrm{csp}}$ is a chordal extension of $G^{\textrm{csp}}$. Let $I_l,l\in[p]$ be the maximal cliques of $\overline{G}^{\textrm{csp}}$ with cardinal denoted by $n_l,l\in[p]$. Then the set of variables $\underline{X}$ is partitioned into $\underline{X}(I_1), \underline{X}(I_2), \ldots, \underline{X}(I_p)$. Let $J_1,\ldots,J_p$ be defined as in Section~\ref{cs}.

Now we consider the term sparsity pattern for each subsystem involving the variables $\underline{X}(I_l)$, $l\in[p]$ respectively as follows. Let
\begin{equation}\label{cts-eq1}
    \A:= \supp(f)\cup\bigcup_{j=1}^m\supp(g_j)\textrm{ and }
    \A_l:= \{w\in\A\mid\var(w)\subseteq \underline{X}(I_l)\} \,,
\end{equation}
for $l\in[p]$. As before, let $g_0=1$, $d_j=\lceil\deg(g_j)/2\rceil$, $j\in\{0\}\cup[m]$ and $d=\max\{\lceil\deg(f)/2\rceil,d_1,\ldots,d_m\}$. Fix a relaxation order $\hat{d}\ge d$. Let $\W_{\hat{d}-d_j,l}$ be the standard monomial basis of degree $\le\hat{d}-d_j$ with respect to the variables $\underline{X}(I_l)$ and $G_{\hat{d},l}^{\textrm{tsp}}$ be the tsp graph with nodes $\W_{\hat{d},l}$ associated with $\A_l$ defined as in Section~\ref{sec3-con}.
Assume that $G_{\hat{d},l,0}^{(0)}=G_{\hat{d},l}^{\textrm{tsp}}$ and $G_{\hat{d},l,j}^{(0)},j\in J_l, l\in[p]$ are empty graphs. Letting
\begin{equation}\label{cts-eq2}
    \CC_{\hat{d}}^{(k-1)}:=\bigcup_{l=1}^p\bigcup_{j\in \{0\}\cup J_l}\supp_{g_j}(G_{\hat{d},l,j}^{(k-1)})\cup\W_{\hat{d}}^2,\quad k\ge1,
\end{equation}
we recursively define a sequence of graphs $(G_{\hat{d},l,j}^{(k)}(V_{\hat{d},l,j},E_{\hat{d},l,j}^{(k)}))_{k\ge1}$ with $V_{\hat{d},l,j}=\W_{\hat{d}-d_j,l}$ for $j\in\{0\}\cup J_l,l\in[p]$ by
\begin{equation}\label{cts-eq3}
G_{\hat{d},l,j}^{(k)}:=\overline{F_{\hat{d},l,j}^{(k)}},
\end{equation}
where $F_{\hat{d},l,j}^{(k)}$ is the graph with $V(F_{\hat{d},l,j}^{(k)})=\W_{\hat{d}-d_j,l}$ and
\begin{equation}\label{cts-eq4}
E(F_{\hat{d},l,j}^{(k)})=\{\{u,v\}\mid(u,v)\in\W_{\hat{d}-d_j,l}\times \W_{\hat{d}-d_j,l}, u^{\star}\supp(g_j)v\cap\CC_{\hat{d}}^{(k-1)}\ne\emptyset\}.
\end{equation}

Let $r_{l,j}=|\W_{\hat{d}-d_j,l}|$ for all $l,j$. Then for each $k\ge1$, the sparse moment relaxation based on combined correlative-term sparsity for $(\textrm{EQ}_0)$ is defined as:
\begin{equation}\label{cts-eq5}
(\textrm{EQ}^{\textrm{cs-ts}}_{\hat{d},k}):
\begin{array}{rll}
\lambda^{\textrm{cs-ts}}_{\hat{d},k}(f, S):=
&\inf&L_{\y}(f)\\
&\textrm{s.t.}&B_{G_{\hat{d},l,0}^{(k)}}\circ M_{\hat{d}}(\y, I_l)\in\Pi_{G_{\hat{d},l,0}^{(k)}}(\S_+^{r_{l,0}}), l\in[p],\\
&&B_{G_{\hat{d},l,j}^{(k)}}\circ M_{\hat{d}-d_j}(g_j\y, I_l)\in\Pi_{G_{\hat{d},l,j}^{(k)}}(\S_+^{r_{l,j}}), j\in J_l,l\in[p],\\
&&y_1=1.
\end{array}
\end{equation}

For any $l,j$, write $M_{\hat{d}-d_j}(g_j\y, I_l)=\sum_{w}D_{w}^{l,j}y_{w}$ for appropriate matrices $\{D_{w}^{l,j}\}$. Then for each $k\ge1$, the dual of $(\textrm{EQ}^{\textrm{cs-ts}}_{\hat{d},k})$ reads as:
\begin{equation}\label{cts-eq6}
(\textrm{EQ}^{\textrm{cs-ts}}_{\hat{d},k})^*:\quad
\begin{cases}
\sup\,&\lambda\\
\textrm{s.t.}\, &\sum_{l=1}^p\sum_{j\in \{0\}\cup J_l}\langle Q_{l,j},D_{w}^{l,j}\rangle+\lambda\delta_{1w}=a_{w},\quad\forall w\in\CC_{\hat{d}}^{(k)},\\
&Q_{l,j}\in\S_+^{r_{l,j}}\cap\S_{G_{\hat{d},l,j}^{(k)}},\quad j\in \{0\}\cup J_l, l\in[p],
\end{cases}
\end{equation}
where $\CC_{\hat{d}}^{(k)}$ is defined as in \eqref{cts-eq2}.

By similar arguments as for Theorem \ref{cts-thm1}, we can prove the following theorem.
\begin{theorem}\label{cts-prop1}
Assume that $\{f\}\cup S\subseteq\Sym\,\R\langle\underline{X}\rangle$. Then the followings hold:
\begin{enumerate}[(i)]
    \item Fixing a relaxation order $\hat{d}\ge d$, the sequence $(\lambda^{\textrm{cs-ts}}_{\hat{d},k}(f, S))_{k\ge1}$ is monotone non-decreasing and $\lambda^{\textrm{cs-ts}}_{\hat{d},k}(f, S)\le\lambda^{\textrm{cs}}_{\hat{d}}(f, S)$ for all $k$ (with $\lambda^{\textrm{cs}}_{\hat{d}}(f, S)$ defined in Section~\ref{cs}).
    \item Fixing a sparse order $k\ge 1$, the sequence $(\lambda^{\textrm{cs-ts}}_{\hat{d},k}(f, S))_{\hat{d}\ge d}$ is monotone non-decreasing.
    \item If the maximal chordal extension is used in \eqref{cts-eq3}, then $(\lambda^{\textrm{cs-ts}}_{\hat{d},k}(f, S))_{k\ge 1}$ converges to $\lambda^{\textrm{cs}}_{\hat{d}}(f, S)$ in finitely many steps.
\end{enumerate}
\end{theorem}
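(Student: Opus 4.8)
\textbf{Proof plan for Theorem \ref{cts-prop1}.}
The plan is to mirror the three corresponding items of Theorem \ref{cts-thm1}, replacing the single tsp graph by the family $\{G_{\hat d,l,j}^{(k)}\}_{l,j}$ attached to the cliques $I_l$ and relying repeatedly on Theorem \ref{sec2-thm2}. For (i), the key observation is the chain of graph inclusions $G_{\hat d,l,j}^{(k)}\subseteq G_{\hat d,l,j}^{(k+1)}$, which holds by construction: from \eqref{cts-eq4} the edge sets $E(F_{\hat d,l,j}^{(k)})$ are nondecreasing in $k$ because $\CC_{\hat d}^{(k-1)}\subseteq\CC_{\hat d}^{(k)}$ (each $\supp_{g_j}(G_{\hat d,l,j}^{(k-1)})$ grows with $k$), and then \eqref{assum} propagates the inclusion through the chordal extension \eqref{cts-eq3}. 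Combined with the monotonicity assumption \eqref{assum}, this gives that every maximal clique of $G_{\hat d,l,j}^{(k)}$ sits inside a maximal clique of $G_{\hat d,l,j}^{(k+1)}$; by Theorem \ref{sec2-thm2}, enlarging the clique structure only relaxes the PSD-completion constraints $B_{G}\circ M\in\Pi_G(\S_+)$. Hence $(\textrm{EQ}^{\textrm{cs-ts}}_{\hat d,k})$ is a relaxation of $(\textrm{EQ}^{\textrm{cs-ts}}_{\hat d,k+1})$, which in turn (taking $k$ large enough that each graph has become a union of cliques covering $\W_{\hat d,l}\times\W_{\hat d,l}$, or more directly noting that a complete graph recovers the dense matrix) is a relaxation of $(\textrm{EQ}^{\textrm{cs}}_{\hat d})$; monotonicity and the bound $\lambda^{\textrm{cs-ts}}_{\hat d,k}(f,S)\le\lambda^{\textrm{cs}}_{\hat d}(f,S)$ follow.

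For (ii), I would show $G_{\hat d,l,j}^{(k)}\subseteq G_{\hat d+1,l,j}^{(k)}$ for all $l,j$ by induction on $k$, exactly as in the proof of Theorem \ref{cts-thm1}(iii). The base case $k=1$ uses that the edge set of the tsp graph $G^{\textrm{tsp}}_{\hat d,l}$ defined via \eqref{sec3-eq1} (restricted to $\A_l$) only grows when $\hat d$ is increased, since $\W_{\hat d-d_j,l}\subseteq\W_{\hat d+1-d_j,l}$ and the membership test $u^\star v\in\A_l\cup\W_{\hat d}^2$ is monotone. For the inductive step, one feeds the hypothesis $G_{\hat d,l,j}^{(k)}\subseteq G_{\hat d+1,l,j}^{(k)}$ into \eqref{cts-eq2}--\eqref{cts-eq4}: the enlarged $\CC$-set $\CC_{\hat d}^{(k)}\subseteq\CC_{\hat d+1}^{(k)}$ makes the support-extension edge sets of \eqref{cts-eq4} nondecreasing in $\hat d$, and \eqref{assum} transfers this through the chordal extension \eqref{cts-eq3}. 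Theorem \ref{sec2-thm2} then gives that $(\textrm{EQ}^{\textrm{cs-ts}}_{\hat d,k})$ is a relaxation of $(\textrm{EQ}^{\textrm{cs-ts}}_{\hat d+1,k})$, hence $(\lambda^{\textrm{cs-ts}}_{\hat d,k}(f,S))_{\hat d\ge d}$ is monotone nondecreasing.

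For (iii), I would argue as in the proof of Theorem \ref{cts-thm1}(iv): take an arbitrary feasible solution $\y^*$ of the stabilized relaxation $(\textrm{EQ}^{\textrm{cs-ts}}_{\hat d,\circ})$, whose decision variables are indexed by $\CC_{\hat d}^{(\circ)}$, and extend it to $\overline\y^*$ over the full index set $\bigcup_l\W_{\hat d,l}^\star\W_{\hat d,l}$ by setting the missing coordinates to zero. When the maximal chordal extension is used in \eqref{cts-eq3}, each $\Pi_{G_{\hat d,l,j}^{(k)}}(\S_+^{r_{l,j}})$ consists of block-diagonal matrices (up to permutation), so $B_{G}\circ M\in\Pi_G(\S_+)$ forces $B_{G}\circ M\succeq0$; by construction $M_{\hat d-d_j}(g_j\overline\y^*,I_l)=B_{G_{\hat d,l,j}^{(\circ)}}\circ M_{\hat d-d_j}(g_j\y^*,I_l)\succeq0$ for all $l$ and all $j\in\{0\}\cup J_l$, so $\overline\y^*$ is feasible for $(\textrm{EQ}^{\textrm{cs}}_{\hat d})$ and $L_{\y^*}(f)=L_{\overline\y^*}(f)\ge\lambda^{\textrm{cs}}_{\hat d}(f,S)$. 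As $\y^*$ was arbitrary this gives $\lambda^{\textrm{cs-ts}}_{\hat d,\circ}(f,S)\ge\lambda^{\textrm{cs}}_{\hat d}(f,S)$, and combined with (i) it yields equality. The one point deserving care — and the main obstacle — is the zero-extension argument: one must check that setting $\overline y_w^*=0$ outside $\CC_{\hat d}^{(\circ)}$ genuinely produces the block structure $M(\overline\y^*)=B_{G^{(\circ)}}\circ M(\y^*)$ on each clique $I_l$, i.e. that every entry $u^\star g_j v$ of a localizing submatrix that lies outside $E(G_{\hat d,l,j}^{(\circ)})$ indeed indexes a monomial outside $\CC_{\hat d}^{(\circ)}$; this is precisely what the stabilization of the support-extension procedure guarantees (otherwise a further support-extension step would have added the corresponding edge), but it should be spelled out to make the reduction to the dense relaxation rigorous. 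The interaction between the clique restriction $\var(w)\subseteq\underline X(I_l)$ and the cyclic/support bookkeeping is slightly more delicate here than in the purely term-sparse case of Theorem \ref{cts-thm1}, though it causes no essential new difficulty.
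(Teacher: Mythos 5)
Your proposal is correct and follows essentially the same route as the paper, which does not write out a proof of Theorem \ref{cts-prop1} but states that it follows "by similar arguments as for Theorem \ref{cts-thm1}"; your items (i)--(iii) are precisely the clique-indexed adaptations of items (ii)--(iv) of that proof, including the inductive inclusion $G_{\hat d,l,j}^{(k)}\subseteq G_{\hat d+1,l,j}^{(k)}$ and the zero-extension argument for the maximal chordal extension. The only small caution is in (i): the stabilized graphs need not become complete, so the comparison with $(\textrm{EQ}^{\textrm{cs}}_{\hat d})$ should rest on your second observation (each $G_{\hat d,l,j}^{(k)}$ is a subgraph of the complete graph on $\W_{\hat d-d_j,l}$, so by Theorem \ref{sec2-thm2} the dense PSD constraints imply the completability constraints), which you do give.
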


From Theorem \ref{cts-prop1}, we deduce the following two-level hierarchy of lower bounds for the optimum $\lambda_{\min}(f, S)$ of $(\textrm{EQ}_{0})$:
\begin{equation}\label{mixhierc}
\begin{matrix}
\lambda^{\textrm{cs-ts}}_{d,1}(f, S)&\le&\lambda^{\textrm{cs-ts}}_{d,2}(f, S)&\le&\cdots&\le&\lambda^{\textrm{cs}}_{d}(f, S)\\
\vge&&\vge&&&&\vge\\
\lambda^{\textrm{cs-ts}}_{d+1,1}(f, S)&\le&\lambda^{\textrm{cs-ts}}_{d+1,2}(f, S)&\le&\cdots&\le&\lambda^{\textrm{cs}}_{d+1}(f, S)\\
\vge&&\vge&&&&\vge\\
\vdots&&\vdots&&\vdots&&\vdots\\
\vge&&\vge&&&&\vge\\
\lambda^{\textrm{cs-ts}}_{\hat{d},1}(f, S)&\le&\lambda^{\textrm{cs-ts}}_{\hat{d},2}(f, S)&\le&\cdots&\le&\lambda^{\textrm{cs}}_{\hat{d}}(f, S)\\
\vge&&\vge&&&&\vge\\
\vdots&&\vdots&&\vdots&&\vdots\\
\end{matrix}
\end{equation}

\section{Trace Optimization for Noncommutative Polynomials with term sparsity}\label{sec5}
The results presented in the previous sections concerning eigenvalue optimization for noncommutative polynomials with term sparsity can be slightly adjusted to deal with trace optimization for noncommutative polynomials with term sparsity. We present the main results concerning trace optimization in this section and omit the proofs. 
\subsection{The unconstrained case}
Let $f=\sum_{w\in\A}a_{w}w\in\Sym\,\R\langle\underline{X}\rangle$ with $\supp(f)=\A$ (w.l.o.g. assuming $1\in\A$) and let $d=\cdeg(f)$. We define $H_0(V,E_0)$ to be the graph with $V=\W_d$ and
\begin{equation}\label{sec4-eq0}
    E_0=\{\{u,v\}\mid(u,v)\in V\times V,\,u\ne v,\,[u^{\star}v]\in[\A\cup\W_d^2]\}.
\end{equation}
We recursively define a sequence of graphs $(H_k(V,F_k))_{k\ge1}$ by
\begin{equation}\label{sec4-graph1}
H_k:=\overline{\CSE(H_{k-1})},
\end{equation}
where $\CSE(H_{k-1})$ (the cyclic support extension of $H_{k-1}$) is the graph with nodes $\W_d$ and with edges
$$E(\CSE(H_{k-1})):=\{\{u,v\}\mid(u,v)\in V\times V,\, u\ne v,\,[u^{\star}v]\in[\supp(H_{k-1})\cup\W_d^2]\}.$$

Let $r=|\W_d|$. As for eigenvalue optimization, we can consider the following series of sparse moment relaxations for $(\textrm{TP})$ indexed by $k\ge1$:
\begin{equation}\label{upop-strace1}
(\textrm{TP}^k):\quad
\begin{array}{rll}
\mu_k(f):=&\inf&L_{\y}(f)\\
&\textrm{s.t.}&B_{H_k}\circ M_{d}(\y)\in\Pi_{H_k}(\S^{r}_+),\\
&&[B_{H_k}\circ M_{d}(\y)]_{uv}=[B_{H_k}\circ M_{d}(\y)]_{wz},\textrm{for all }u^{\star}v\stackrel{\textrm{cyc}}{\sim} w^{\star}z,\\
&&y_{1}=1.
\end{array}
\end{equation}
The dual of $(\textrm{TP}^k)$ reads as:
\begin{equation}\label{upop-strace2}
(\textrm{TP}^k)^*:\quad
\begin{array}{ll}
\sup&\mu\\
\textrm{s.t.} &\sum_{w\stackrel{\textrm{cyc}}{\sim} v}(\langle Q,A_{w}\rangle+\mu\delta_{1w})=\sum_{w\stackrel{\textrm{cyc}}{\sim} v}a_{w},\quad\forall v\in[\supp(H_k)\cup\W_d^2],\\
&Q\in\S_+^{r}\cap\S_{H_k},
\end{array}
\end{equation}
where $A_{w}$ is defined as in Section~\ref{sec2-eo}. We call $k$ the {\em sparse order}. There is no duality gap between $(\textrm{TP}^k)$ and $(\textrm{TP}^k)^*$. By construction, one has $H_{k}\subseteq H_{k+1}$ for all $k\ge1$ and therefore the sequence of graphs $(H_k(V,E_k))_{k\ge1}$ stabilizes after a finite number of steps. We denote the stabilized graph by $H_{\circ}(V,E_{\circ})$ and the optimum of the corresponding SDP relaxation by $\mu_{\circ}(f)$. 

As for eigenvalue optimization, we obtain the following hierarchy of lower bounds for $\tr_{\min}(f)$:
\begin{equation}\label{cliquehier_tr}
\mu_1(f)\le\mu_2(f)\le\cdots\le\mu_{\circ}(f)\le\mu(f)\le\tr_{\min}(f).
\end{equation}
Moreover, if the maximal chordal extension is used in \eqref{sec4-graph1}, then $(\mu_k(f))_{k\ge 1}$ converges to $\mu(f)$ in finitely many steps, i.e., $\mu_{\circ}(f)=\mu(f)$.

\begin{remark}
The monomial basis $\W_d$ used in this subsection can be replaced by the reduced monomial basis returned by the tracial Newton polytope method \cite[\textsection3.3]{burgdorf16}. However, for the numerical experiments performed in this paper (see Section~\ref{sec:benchs}), we have noticed that it is somewhat expensive to implement the tracial Newton polytope method while not yielding a significant reduction of the size of the monomial basis. Hence we stick to the standard monomial basis $\W_d$.
\end{remark}

\subsection{The constrained case}\label{trace-con}
Assume that $f=\sum_{w}a_{w}w\in\Sym\,\R\langle\underline{X}\rangle$ and $S=\{g_1,\ldots,g_m\}\subseteq\Sym\,\R\langle\underline{X}\rangle$. As before, let $\A=\supp(f)\cup\bigcup_{j=1}^m\supp(g_j)$, $g_0=1$ and $d_j=\lceil\deg(g_j)/2\rceil$, $j\in\{0\}\cup[m]$. Let $d=\max\{\lceil\cdeg(f)/2\rceil,d_1,\ldots,d_m\}$. Fix a relaxation order $\hat{d}\ge d$. We define a graph $H_{\hat{d}}^{\textrm{tsp}}(V_{\hat{d}},E_{\hat{d}}^{\textrm{tsp}})$ with $V_{\hat{d}}=\W_{\hat{d}}$ and
\begin{equation}\label{sec4-eq1}
E_{\hat{d}}^{\textrm{tsp}}=\{\{u,v\}\mid(u,v)\in V_{\hat{d}}\times V_{\hat{d}},\,u\ne v,\,[u^{\star}v]\in[\A\cup\W_{\hat{d}}^2]\},
\end{equation}
which is called the {\em cyclic tsp graph} associated with $\A$ (or $f$ and $S$). Let $H_{\hat{d},0}^{(0)}=H_{\hat{d}}^{\textrm{tsp}}$ and $H_{\hat{d},j}^{(0)}$ be an empty graph for $j\in[m]$. We recursively define a sequence of graphs $(H_{\hat{d},j}^{(k)}(V_{\hat{d},j},E_{\hat{d},j}^{(k)}))_{k\ge1}$ with $V_{\hat{d},j}=\W_{\hat{d}-d_j}$ for $j\in\{0\}\cup[m]$ via two successive steps:\\
1) {\bf cyclic support extension}: Define $K_{\hat{d},j}^{(k)}$ to be the graph with $V(K_{j,\hat{d}}^{(k)})=\W_{\hat{d}-d_j}$ and
\begin{align}\label{sec4-eq2}
E(K_{\hat{d},j}^{(k)})=&\{\{u,v\}\mid(u,v)\in\W_{\hat{d}-d_j}\times \W_{\hat{d}-d_j},\,u\ne v,\\
&[u^{\star}\supp(g_j)v]\cap[\bigcup_{j=0}^m\supp_{g_j}(H_{j,\hat{d}}^{(k-1)})\cup\W_{\hat{d}}^2]\ne\emptyset\}.\notag
\end{align}
2) {\bf chordal extension}: Let
\begin{equation}\label{sec4-graph}
    H_{\hat{d},j}^{(k)}:=\overline{K_{\hat{d},j}^{(k)}}.
\end{equation}

Let $r_j=|\W_{\hat{d}-d_j}|$. As for eigenvalue optimization, we then consider the following series of sparse moment relaxations for ($\textrm{TQ}_{\hat{d}}$) indexed by $k\ge1$:
\begin{equation}\label{cpop-strace1}
(\textrm{TQ}_{\hat{d},k}^{\textrm{ts}}):
\begin{array}{rll}
\mu_{\hat{d},k}^{\textrm{ts}}(f, S):=&\inf &L_{\y}(f)\\
&\textrm{s.t.}&B_{H_{\hat{d},0}^{(k)}}\circ M_{\hat{d}}(\y)\in\Pi_{H_{\hat{d},0}^{(k)}}(\S_+^{r_0}),\\
&&B_{H_{\hat{d},j}^{(k)}}\circ M_{\hat{d}-d_j}(g_j\y)\in\Pi_{H_{\hat{d},j}^{(k)}}(\S_+^{r_j}), j\in[m],\\
&&[B_{H_{\hat{d},0}^{(k)}}\circ M_{\hat{d}}(\y)]_{uv}=[B_{H_{\hat{d},0}^{(k)}}\circ M_{\hat{d}}(\y)]_{wz},\textrm{for all }u^{\star}v\stackrel{\textrm{cyc}}{\sim} w^{\star}z,\\
&&y_{1}=1.
\end{array}
\end{equation}
We call $k$ the {\em sparse order}. By construction, one has $H_{\hat{d},j}^{(k)}\subseteq H_{\hat{d},j}^{(k+1)}$ for all $j,k$. Therefore, for every $j$, the sequence of graphs
$(H_{\hat{d},j}^{(k)})_{k\ge1}$ stabilizes after a finite number of steps. We denote the stabilized graphs by $H_{\hat{d},j}^{(\circ)}$ for all $j$ and the optimum of the corresponding SDP relaxation by $\mu_{\hat{d},\circ}(f, S)$.

For each $k\ge1$, the dual of $(\textrm{TQ}_{\hat{d},k}^{\textrm{ts}})$ reads as:
\begin{equation}\label{cpop-strace2}
(\textrm{TQ}_{\hat{d},k}^{\textrm{ts}})^*:
\begin{array}{rll}
&\sup&\mu\\
&\textrm{s.t.}&\sum_{w\stackrel{\textrm{cyc}}{\sim} v}(\sum_{j=0}^m\langle Q_j,D_{w}^j\rangle+\mu\delta_{1w})=\sum_{w\stackrel{\textrm{cyc}}{\sim} v}a_{w},\\
&&\quad\quad\quad\quad\quad\quad\quad\quad\quad\quad\forall v\in[\bigcup_{j=0}^m\supp_{g_j}(H_{\hat{d},j}^{(k)}))\cup\W^2_{\hat{d}}],\\
&&Q_j\in\S_+^{r_j}\cap\S_{H_{\hat{d},j}^{(k)}},\quad j\in\{0\}\cup[m],
\end{array}
\end{equation}
where $D_{w}^j$ is defined as in Section~\ref{sec2-eo}.

As for eigenvalue optimization, we have
\begin{theorem}\label{sec4-thm3}
Assume that $\{f\}\cup S\in\Sym\,\R\langle\underline{X}\rangle$. Then the followings hold:
\begin{enumerate}[(i)]
   \item Assume that $S$ is feasible and contains a polynomial $g_1=R^2-\sum_{i=1}^nX_i^2$ for some $R>0$. Then for all $\hat{d},k$, there is no duality gap between $(\textrm{TQ}_{\hat{d},k}^{\textrm{ts}})$ and $(\textrm{TQ}_{\hat{d},k}^{\textrm{ts}})^*$.
    \item Fixing a relaxation order $\hat{d}\ge d$, the sequence $(\mu^{\textrm{ts}}_{\hat{d},k}(f, S))_{k\ge1}$ is monotone nondecreasing and $\mu^{\textrm{ts}}_{\hat{d},k}(f, S)\le\mu_{\hat{d}}(f, S)$ for all $k$ (with $\mu_{\hat{d}}(f, S)$ defined in \eqref{cpop-trace1}).
    \item Fixing a sparse order $k\ge1$, the sequence $(\mu^{\textrm{ts}}_{\hat{d},k}(f, S))_{\hat{d}\ge d}$ is monotone nondecreasing.
    \item If the maximal chordal extension is used in \eqref{sec4-graph}, then $(\mu^{\textrm{ts}}_{\hat{d},k}(f, S))_{k\ge 1}$ converges to $\mu_{\hat{d}}(f, S)$ in finitely many steps, i.e., $\mu_{\hat{d},\circ}(f, S)=\mu_{\hat{d}}(f, S)$.
\end{enumerate}
\end{theorem}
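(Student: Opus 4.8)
The plan is to transcribe, almost word for word, the proof of Theorem~\ref{cts-thm1} (itself modelled on Theorem~\ref{sec3-thm}), the only genuine addition being that the linear equalities coming from cyclic equivalence must be carried through each comparison between relaxations. Throughout I abbreviate $\CC_{\hat d}^{(k)}:=\bigcup_{j}\supp_{g_j}(H_{\hat d,j}^{(k)})\cup\W_{\hat d}^2$, the index set carrying the decision variables of $(\textrm{TQ}_{\hat d,k}^{\textrm{ts}})$, and I record the key structural fact on which everything hinges: because the construction \eqref{sec4-eq1}--\eqref{sec4-graph} is phrased entirely in terms of canonical representatives $[\cdot]$ and the cyclic support extension re-absorbs the edges created by a chordal completion, the stabilized graph satisfies $E(H_{\hat d,0}^{(\circ)})=\{\{u,v\}:u\ne v,\ [u^\star v]\in[\CC_{\hat d}^{(\circ)}]\}$; in particular it is saturated under cyclic equivalence.

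For (i) I would first exhibit a feasible point of $(\textrm{TQ}_{\hat d,k}^{\textrm{ts}})$: pick $\underline A\in\D_S$ (nonempty since $S$ is feasible) and set $y_w=\tr\,w(\underline A)$, restricted to the relevant indices. This satisfies the cyclic equalities automatically because the normalized trace vanishes on commutators, and $M_{\hat d}(\y)$, $M_{\hat d-d_j}(g_j\y)$ are Gram matrices of the positive semidefinite forms $a,b\mapsto\tr(a^\star b)$ and $a,b\mapsto\tr(a^\star g_j(\underline A)b)$, so all their principal submatrices are PSD. Boundedness of the feasible set then follows verbatim from the proof of Theorem~\ref{cts-thm1}(i): from $g_1=R^2-\sum_iX_i^2$ one gets $\Tr(M_t(\y))\le R^2\Tr(M_{t-1}(\y))+1$, hence $\Tr(M_{\hat d}(\y))\le\sum_{t=0}^{\hat d}R^{2t}$ and $\sqrt{\sum_w y_w^2}\le\sum_{t=0}^{\hat d}R^{2t}$; only diagonal entries intervene here, so the Hadamard masking $B_{H_{\hat d,j}^{(k)}}\circ(\cdot)$ is immaterial and the extra cyclic equalities only shrink the feasible set. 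Strong duality between $(\textrm{TQ}_{\hat d,k}^{\textrm{ts}})$ and $(\textrm{TQ}_{\hat d,k}^{\textrm{ts}})^*$ then comes from \cite{josz2016strong}, exactly as in Theorem~\ref{cts-thm1}(i).

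Items (ii) and (iii) go as in Theorem~\ref{cts-thm1}(ii)--(iii): one has $H_{\hat d,j}^{(k)}\subseteq H_{\hat d,j}^{(k+1)}$ and, by induction on $k$ from \eqref{sec4-eq1}, \eqref{sec4-eq2}, \eqref{sec4-graph} and \eqref{assum}, also $H_{\hat d,j}^{(k)}\subseteq H_{\hat d+1,j}^{(k)}$, so by Theorem~\ref{sec2-thm2} the positivity constraints of the coarser relaxation are implied by those of the finer one. The only new point is that the cyclic equalities of the coarser relaxation must also be implied; since membership $\{u,v\}\in E(H_{\hat d,0}^{(k)})$ depends only on the class $[u^\star v]$, for $u^\star v\stackrel{\textrm{cyc}}{\sim}w^\star z$ the masked entries $[B_{H_{\hat d,0}^{(k)}}\circ M_{\hat d}(\y)]_{uv}$ and $[B_{H_{\hat d,0}^{(k)}}\circ M_{\hat d}(\y)]_{wz}$ are simultaneously a moment (and the equality is inherited) or simultaneously $0$, so restriction of a finer feasible point is feasible for the coarser relaxation with the same objective; this yields both monotonicity statements and $\mu_{\hat d,k}^{\textrm{ts}}(f,S)\le\mu_{\hat d}(f,S)$.

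For (iv), with the maximal chordal extension the matrices in $\Pi_{H_{\hat d,j}^{(k)}}(\S_+^{r_j})$ are block-diagonal up to a permutation, so $B_{H_{\hat d,j}^{(k)}}\circ M\in\Pi_{H_{\hat d,j}^{(k)}}(\S_+^{r_j})$ becomes $B_{H_{\hat d,j}^{(k)}}\circ M\succeq0$. Given an arbitrary feasible $\y^{*}$ of the stabilized relaxation, I would extend it by zero outside $\CC_{\hat d}^{(\circ)}$ to a vector $\overline{\y}^{*}$ indexed by $\W_{\hat d}^{\star}\W_{\hat d}$; the support-extension step forces every word in a nonzero entry of $B_{H_{\hat d,j}^{(\circ)}}\circ M_{\hat d-d_j}(g_j\y)$ to lie in $\CC_{\hat d}^{(\circ)}$ while all other entries are masked to $0$, which gives $M_{\hat d-d_j}(g_j\overline{\y}^{*})=B_{H_{\hat d,j}^{(\circ)}}\circ M_{\hat d-d_j}(g_j\y^{*})\succeq0$ for all $j$. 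Combining the identity $E(H_{\hat d,0}^{(\circ)})=\{\{u,v\}:u\ne v,\ [u^\star v]\in[\CC_{\hat d}^{(\circ)}]\}$ with the cyclic equalities satisfied by $\y^{*}$ shows that $\overline{y}^{*}_{w}$ takes a fixed value on each cyclic class meeting $\CC_{\hat d}^{(\circ)}$ and is $0$ otherwise, so $\overline{\y}^{*}$ satisfies the cyclic equalities of $(\textrm{TQ}_{\hat d})$ and is feasible for it; hence $L_{\y^{*}}(f)=L_{\overline{\y}^{*}}(f)\ge\mu_{\hat d}(f,S)$, and with (ii) this gives $\mu_{\hat d,\circ}(f,S)=\mu_{\hat d}(f,S)$. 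The hard part throughout is precisely this cyclic bookkeeping: one must state carefully, and verify that the chosen chordal extension is compatible with it, the assertion that the graphs $H_{\hat d,j}^{(k)}$ and the supports $\CC_{\hat d}^{(k)}$ stay saturated under cyclic equivalence, so that the restriction map and the extension-by-zero map respect the cyclic equality constraints; granting this, every remaining step is a mechanical transcription of the eigenvalue arguments.
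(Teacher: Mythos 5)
Your proposal is correct and follows exactly the route the paper intends: the paper omits the proof of Theorem~\ref{sec4-thm3}, stating only that it goes ``as for eigenvalue optimization,'' i.e.\ by transcribing the proof of Theorem~\ref{cts-thm1}, and you carry out precisely that transcription while correctly isolating and resolving the one genuinely new ingredient, namely that the stabilized graphs are saturated under cyclic equivalence so that restriction and extension of feasible points respect the tracial equality constraints. One wording slip in (iv): you first say you ``extend by zero outside $\CC_{\hat d}^{(\circ)}$'' but what you actually need (and what your next sentence correctly describes) is the extension that is constant on each cyclic class meeting $[\CC_{\hat d}^{(\circ)}]$ and zero on the remaining classes.
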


Following from Theorem \ref{sec4-thm3}, we have the following two-level hierarchy of lower bounds for the optimum $\tr_{\min}(f,S)^{\II_1}$:
\begin{equation}\label{cliquehierct}
\begin{matrix}
\mu^{\textrm{ts}}_{d,1}(f, S)&\le&\mu^{\textrm{ts}}_{d,2}(f, S)&\le&\cdots&\le&\mu_{d}(f, S)\\
\vge&&\vge&&&&\vge\\
\mu^{\textrm{ts}}_{d+1,1}(f, S)&\le&\mu^{\textrm{ts}}_{d+1,2}(f, S)&\le&\cdots&\le&\mu_{d+1}(f, S)\\
\vge&&\vge&&&&\vge\\
\vdots&&\vdots&&\vdots&&\vdots\\
\vge&&\vge&&&&\vge\\
\mu^{\textrm{ts}}_{\hat{d},1}(f, S)&\le&\mu^{\textrm{ts}}_{\hat{d},2}(f, S)&\le&\cdots&\le&\mu_{\hat{d}}(f, S)\\
\vge&&\vge&&&&\vge\\
\vdots&&\vdots&&\vdots&&\vdots\\
\end{matrix}
\end{equation}
The array of lower bounds \eqref{cliquehierct} (and its associated moment-SOHS relaxations \eqref{cpop-strace1}-\eqref{cpop-strace2}) is what we call the NCTSSOS hierarchy associated with $(\textrm{TQ}_0)$.

\begin{example}\label{sec4-ex2}
Consider $f=2-X^2+XY^2X-Y^2$ and $S=\{4-X^2-Y^2,XY+YX-2\}$. We draw the graph $H_{2,0}^{(0)}$ for $f$ and $S$ in Figure \ref{sec4-ex3}. Since $H_{2,0}^{(0)}$ is already a chordal graph, we don't need any chordal extension. Hence $H_{2,0}^{(1)}=H_{2,0}^{(0)}$ and $(H_{2,j}^{(k)})_{k\ge1}$ immediately stabilizes at $k=1$ for all $j$. We compute that $\mu^{\textrm{ts}}_{2,1}(f, S)=\mu_{2}(f, S)=-1$.

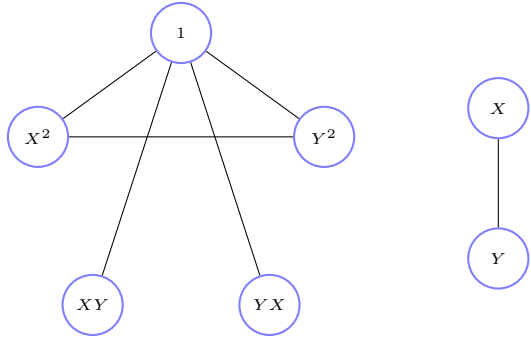
\begin{figure}[htbp]
\caption{The graph $H_{2,0}^{(0)}$ for Example \ref{sec4-ex2}}\label{sec4-ex3}
\begin{center}
{\tiny
\begin{tikzpicture}[every node/.style={circle, draw=blue!50, thick, minimum size=8mm}]
\node (n1) at (90:2) {$1$};
\node (n2) at (162:2) {$X^2$};
\node (n3) at (234:2) {$XY$};
\node (n4) at (306:2) {$YX$};
\node (n5) at (18:2) {$Y^2$};
\draw (n3)--(n1);
\draw (n4)--(n1);
\draw (n5)--(n1);
\draw (n1)--(n2);
\draw (n2)--(n5);
\node[xshift=120] (n6) at (90:1) {$X$};
\node[xshift=120] (n7) at (270:1) {$Y$};
\draw (n6)--(n7);
\end{tikzpicture}}
\end{center}
\end{figure} 
\end{example}

\subsection{Combining correlative sparsity with term sparsity}
We can also combine correlative sparsity with term sparsity for trace optimization. Let $\A$, $I_l,J_l,\A_l$, $\W_{\hat{d},l},\W_{\hat{d}-d_j,l}$ be defined as in Section~\ref{cts}. Fix a relaxation order $\hat{d}\ge d$. Let $H_{\hat{d},l}^{\textrm{tsp}}$ be the cyclic tsp graph with nodes $\W_{\hat{d},l}$ associated with $\A_l$ defined as in Section~\ref{trace-con}.
Assume that $H_{\hat{d},l,0}^{(0)}=H_{\hat{d},l}^{\textrm{tsp}}$ and $H_{\hat{d},l,j}^{(0)},j\in J_l, l\in[p]$ are empty graphs. Letting
\begin{equation}\label{trace-eq2}
    \DD_{\hat{d}}^{(k-1)}:=\bigcup_{l=1}^p\bigcup_{j\in \{0\}\cup J_l}\supp_{g_j}(H_{\hat{d},l,j}^{(k-1)})\cup\W_{\hat{d}}^2,\quad k\ge1,
\end{equation}
we recursively define a sequence of graphs $(H_{\hat{d},l,j}^{(k)}(V_{\hat{d},l,j},E_{\hat{d},l,j}^{(k)}))_{k\ge1}$ with $V_{\hat{d},l,j}=\W_{\hat{d}-d_j,l}$ for $j\in\{0\}\cup J_l,l\in[p]$ by
\begin{equation}\label{trace-eq3}
H_{\hat{d},l,j}^{(k)}:=\overline{K_{\hat{d},l,j}^{(k)}},
\end{equation}
where $K_{\hat{d},l,j}^{(k)}$ is the graph with $V(K_{\hat{d},l,j}^{(k)})=\W_{\hat{d}-d_j,l}$ and
\begin{equation}\label{cts-teq4}
E(K_{\hat{d},l,j}^{(k)})=\{\{u,v\}\mid(u,v)\in\W_{\hat{d}-d_j,l}\times \W_{\hat{d}-d_j,l}, [u^{\star}\supp(g_j)v]\cap[\DD_{\hat{d}}^{(k-1)}]\ne\emptyset\}.
\end{equation}

Let $r_{l,j}=|\W_{\hat{d}-d_j,l}|$ for all $l,j$. Then for each $k\ge1$, the moment relaxation based on combined correlative-term sparsity for $(\textrm{TQ}_0)$ is defined as:
\begin{equation}\label{trace-eq4}
(\textrm{TQ}^{\textrm{cs-ts}}_{\hat{d},k}):
\begin{array}{rll}
\mu^{\textrm{cs-ts}}_{\hat{d},k}(f, S):=
&\inf&L_{\y}(f)\\
&\textrm{s.t.}&B_{H_{\hat{d},l,0}^{(k)}}\circ M_{\hat{d}}(\y, I_l)\in\Pi_{H_{\hat{d},l,0}^{(k)}}(\S_+^{r_{l,0}}), l\in[p],\\
&&B_{H_{\hat{d},l,j}^{(k)}}\circ M_{\hat{d}-d_j}(g_j\y, I_l)\in\Pi_{H_{\hat{d},l,j}^{(k)}}(\S_+^{r_{l,j}}), j\in J_l,l\in[p],\\
&&[B_{H_{\hat{d},l,0}^{(k)}}\circ M_{\hat{d}}(\y,I_l)]_{uv}=[B_{H_{\hat{d},l,0}^{(k)}}\circ M_{\hat{d}}(\y,I_l)]_{wz},\\
&&\quad\quad\quad\quad\quad\quad\quad\quad\quad\quad\textrm{for all }u^{\star}v\stackrel{\textrm{cyc}}{\sim} w^{\star}z,l\in[p],\\
&&y_1=1.
\end{array}
\end{equation}

For each $k\ge1$, the dual of $(\textrm{TQ}^{\textrm{cs-ts}}_{\hat{d},k})$ reads as:
\begin{equation}\label{trace-eq5}
(\textrm{TQ}^{\textrm{cs-ts}}_{\hat{d},k})^*:
\begin{cases}
\sup\,&\mu\\
\textrm{s.t.}\, &\sum_{w\stackrel{\textrm{cyc}}{\sim} v}(\sum_{l=1}^p\sum_{j\in \{0\}\cup J_l}\langle Q_{l,j},D_{w}^{l,j}\rangle+\mu\delta_{1w})=\sum_{w\stackrel{\textrm{cyc}}{\sim} v}a_{w},\forall v\in[\DD_{\hat{d}}^{(k)}],\\
&Q_{l,j}\in\S_+^{r_{l,j}}\cap\S_{H_{\hat{d},l,j}^{(k)}}, j\in \{0\}\cup J_l, l\in[p],
\end{cases}
\end{equation}
where $D_{w}^{l,j}$ is defined as in Section~\ref{cts} and $\DD_{\hat{d}}^{(k)}$ is defined as in \eqref{trace-eq2}.

\begin{theorem}\label{trace-thm1}
Assume that $\{f\}\cup S\subseteq\Sym\,\R\langle\underline{X}\rangle$. Let $\mu^{\textrm{cs}}_{\hat{d}}(f, S)$ be the optimum of the $\hat{d}$-th order sparse moment relaxation based on correlative sparsity for $(\textrm{TQ}_{0})$. Then the followings hold:
\begin{enumerate}[(i)]
    \item Fixing a relaxation order $\hat{d}\ge d$, the sequence $(\mu^{\textrm{cs-ts}}_{\hat{d},k}(f, S))_{k\ge1}$ is monotone non-decreasing and $\mu^{\textrm{cs-ts}}_{\hat{d},k}(f, S)\le\mu^{\textrm{cs}}_{\hat{d}}(f, S)$ for all $k$.
    \item Fixing a sparse order $k\ge 1$, the sequence $(\mu^{\textrm{cs-ts}}_{\hat{d},k}(f, S))_{\hat{d}\ge d}$ is monotone non-decreasing.
    \item If the maximal chordal extension is used in \eqref{trace-eq3}, then $(\mu^{\textrm{cs-ts}}_{\hat{d},k}(f, S))_{k\ge 1}$ converges to $\mu^{\textrm{cs}}_{\hat{d}}(f, S)$ in finitely many steps.
\end{enumerate}
\end{theorem}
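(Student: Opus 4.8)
The plan is to transcribe the proof of Theorem~\ref{cts-prop1} — itself modelled on Theorems~\ref{sec3-thm} and~\ref{cts-thm1} — while importing the two features specific to trace optimization that already appear (with proofs omitted) in Theorem~\ref{sec4-thm3}: the cyclic support extension \eqref{cts-teq4} in place of the ordinary support extension, and the cyclic equality constraints built into \eqref{trace-eq4}. I would establish the graph-nesting facts first, as they underpin all three items.

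By induction on $k$ one proves $H_{\hat{d},l,j}^{(k)}\subseteq H_{\hat{d},l,j}^{(k+1)}$ for all $l,j,k\ge1$, and $H_{\hat{d},l,j}^{(k)}\subseteq H_{\hat{d}+1,l,j}^{(k)}$ for all $l,j,\hat{d}$: the base case uses $\DD_{\hat{d}}^{(0)}\subseteq\DD_{\hat{d}}^{(1)}$ and that the cyclic tsp graph built as in \eqref{sec4-eq1} only grows with $\hat{d}$, while the inductive step combines \eqref{trace-eq2}, \eqref{cts-teq4} and the chordal-extension assumption \eqref{assum}, exactly as in the proof of Theorem~\ref{cts-thm1}(ii)--(iii). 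Given the nesting, Theorem~\ref{sec2-thm2} shows that for $G\subseteq G'$ the membership $B_{G'}\circ M\in\Pi_{G'}(\S_+)$ implies $B_{G}\circ M\in\Pi_{G}(\S_+)$ for the corresponding principal submatrices; hence $(\textrm{TQ}^{\textrm{cs-ts}}_{\hat{d},k})$ is a relaxation of $(\textrm{TQ}^{\textrm{cs-ts}}_{\hat{d},k+1})$ and of $(\textrm{TQ}^{\textrm{cs-ts}}_{\hat{d}+1,k})$, which yields the monotonicity in (i) and (ii); and by the same block-decomposition argument, together with the cyclic-closure observation below, $(\textrm{TQ}^{\textrm{cs-ts}}_{\hat{d},k})$ is also a relaxation of the correlative-sparsity relaxation defining $\mu^{\textrm{cs}}_{\hat{d}}(f,S)$, so $\mu^{\textrm{cs-ts}}_{\hat{d},k}(f,S)\le\mu^{\textrm{cs}}_{\hat{d}}(f,S)$.

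For (iii), the maximal chordal extension makes every cone $\Pi_{H_{\hat{d},l,j}^{(\circ)}}(\S_+^{r_{l,j}})$ block-diagonal (up to permutation), so $B_{H_{\hat{d},l,j}^{(\circ)}}\circ M\in\Pi_{H_{\hat{d},l,j}^{(\circ)}}(\S_+^{r_{l,j}})$ forces $B_{H_{\hat{d},l,j}^{(\circ)}}\circ M\succeq0$. Taking an arbitrary feasible $\y^*$ of the stabilized relaxation and zero-padding it to $\bar\y^*$ outside $\DD_{\hat{d}}^{(\circ)}$, one checks by construction that $M_{\hat{d}-d_j}(g_j\bar\y^*,I_l)=B_{H_{\hat{d},l,j}^{(\circ)}}\circ M_{\hat{d}-d_j}(g_j\y^*,I_l)\succeq0$ for every $l,j$ (the localizing case being exactly as in the proof of Theorem~\ref{cts-thm1}(iv), using that a non-edge of $F$ means \emph{all} the relevant products lie outside the target set) and $L_{\bar\y^*}(f)=L_{\y^*}(f)$. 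If in addition $\bar\y^*$ satisfies every cyclic equality constraint of the correlative-sparsity relaxation, it is feasible for the latter, so $\mu^{\textrm{cs-ts}}_{\hat{d},\circ}(f,S)\ge\mu^{\textrm{cs}}_{\hat{d}}(f,S)$; combined with (i), this gives equality.

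The one genuinely new point — absent from the eigenvalue case, and where I expect the main work to lie — is to reconcile the masking by $B_{H_{\hat{d},l,0}^{(\circ)}}$ and the zero-padding with the cyclic equality constraints on the moment matrices. This reduces to showing that the stabilized moment graph $H_{\hat{d},l,0}^{(\circ)}$ is \emph{cyclically closed}: for basis monomials $u,v$, one has $\{u,v\}\in E(H_{\hat{d},l,0}^{(\circ)})$ or $u=v$ \emph{if and only if} $[u^{\star}v]\in[\DD_{\hat{d}}^{(\circ)}]$. Both implications come from \eqref{cts-teq4} at stabilization: since $g_0=1$, one has $\{u,v\}\in E(K_{\hat{d},l,0}^{(k)})\iff[u^{\star}v]\in[\DD_{\hat{d}}^{(k-1)}]$, so $[u^{\star}v]\in[\DD_{\hat{d}}^{(\circ)}]$ forces $\{u,v\}\in E(K_{\hat{d},l,0}^{(\circ+1)})\subseteq E(H_{\hat{d},l,0}^{(\circ+1)})=E(H_{\hat{d},l,0}^{(\circ)})$, and conversely $\{u,v\}\in E(H_{\hat{d},l,0}^{(\circ)})$ gives $u^{\star}v\in\supp_{g_0}(H_{\hat{d},l,0}^{(\circ)})\subseteq\DD_{\hat{d}}^{(\circ)}$. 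Consequently, for $u^{\star}v\stackrel{\textrm{cyc}}{\sim}w^{\star}z$ with $u,v,w,z\in\W_{\hat{d},l}$, the pairs $\{u,v\}$ and $\{w,z\}$ are simultaneously edges or both non-edges of $H_{\hat{d},l,0}^{(\circ)}$, and $u^{\star}v,w^{\star}z$ are simultaneously in $\DD_{\hat{d}}^{(\circ)}$ or both outside it; so $\bar y^*_{u^{\star}v}=\bar y^*_{w^{\star}z}$ follows from the cyclic equality constraint $[B_{H_{\hat{d},l,0}^{(\circ)}}\circ M]_{uv}=[B_{H_{\hat{d},l,0}^{(\circ)}}\circ M]_{wz}$ of the stabilized relaxation, and the restriction arguments used for (i)--(ii) are handled the same way. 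The localizing graphs $H_{\hat{d},l,j}^{(\circ)}$, $j\neq0$, need no such care, since \eqref{trace-eq4} attaches no cyclic equality constraint to the associated localizing matrices. Beyond this, the proof is a mechanical adaptation of the established eigenvalue arguments.
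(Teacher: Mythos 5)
The paper gives no proof of this theorem---Section~\ref{sec5} explicitly omits proofs and defers to the eigenvalue-case arguments of Theorems~\ref{cts-thm1} and~\ref{cts-prop1}---and your proposal is precisely that adaptation (graph nesting via \eqref{assum} and induction, Theorem~\ref{sec2-thm2} for the relaxation orderings in (i)--(ii), block-diagonality plus zero-padding for (iii)), so it follows essentially the same intended approach and is correct. The one genuinely new ingredient you supply, the cyclic-closure property of the stabilized moment graphs $H^{(\circ)}_{\hat d,l,0}$ used to reconcile the masking and the zero-padding with the cyclic equality constraints of \eqref{trace-eq4}, is sound and is exactly the point the paper leaves implicit.
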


From Theorem \ref{trace-thm1}, we deduce the following two-level hierarchy of lower bounds for the optimum $\tr_{\min}(f,S)^{\II_1}$:
\begin{equation}\label{trace-mixhierc}
\begin{matrix}
\mu^{\textrm{cs-ts}}_{d,1}(f, S)&\le&\mu^{\textrm{cs-ts}}_{d,2}(f, S)&\le&\cdots&\le&\mu^{\textrm{cs}}_{d}(f, S)\\
\vge&&\vge&&&&\vge\\
\mu^{\textrm{cs-ts}}_{d+1,1}(f, S)&\le&\mu^{\textrm{cs-ts}}_{d+1,2}(f, S)&\le&\cdots&\le&\mu^{\textrm{cs}}_{d+1}(f, S)\\
\vge&&\vge&&&&\vge\\
\vdots&&\vdots&&\vdots&&\vdots\\
\vge&&\vge&&&&\vge\\
\mu^{\textrm{cs-ts}}_{\hat{d},1}(f, S)&\le&\mu^{\textrm{cs-ts}}_{\hat{d},2}(f, S)&\le&\cdots&\le&\mu^{\textrm{cs}}_{\hat{d}}(f, S)\\
\vge&&\vge&&&&\vge\\
\vdots&&\vdots&&\vdots&&\vdots\\
\end{matrix}
\end{equation}

\section{Numerical Experiments}\label{sec:benchs}
In this section, we present numerical results of the proposed NCTSSOS hierarchies for both unconstrained and constrained noncommutative polynomial optimization problems.
Our tool to implement these hierarchies, named {\tt NCTSSOS}, is written as a Julia package. 
{\tt NCTSSOS} utilizes the Julia packages LightGraphs \cite{graph} to handle graphs, ChordalGraph \cite{Wang20} to generate an approximately minimum chordal extension and JuMP \cite{jump} to model SDP. Finally, {\tt NCTSSOS} relies on MOSEK \cite{mosek} to solve SDP. {\tt NCTSSOS} is freely available at

\vspace{2pt}
\centerline{https://github.com/wangjie212/NCTSSOS.}
\vspace{2pt}

All numerical examples were computed on an Intel Core i5-8265U@1.60GHz CPU with 8GB RAM memory. The timing includes the time for pre-processing (to get the block structure in {\tt NCTSSOS}), the time for modeling SDP and the time for solving SDP. For comparison purpose, we also implement the dense moment-SOHS relaxation in {\tt NCTSSOS}. The notations that we use are listed in Table \ref{table1}.
\begin{table}[htbp]
\caption{The notations}\label{table1}
\begin{center}
\begin{tabular}{|c|c|}
\hline
$n$&the number of variables\\
\hline
$k$&the sparse order\\
\hline
mb&the maximal size of blocks\\
\hline
opt&the optimal value\\
\hline
time&running time in seconds\\ 
\hline
$0$&a number with absolute value less than $\num{1e-4}$\\
\hline
-&out of memory\\
\hline
\end{tabular}
\end{center}
\end{table}

\subsection{Eigenvalue optimization examples}\label{eoe}
We first focus on the unconstrained case and consider the eigenvalue minimization problem for the following functions.

$\bullet$ The nc version of the Broyden banded function
\begin{equation*}
    f_{\textrm{Bb}}(\x)=\sum_{i=1}^n(2X_i+5X_i^3+1-\sum_{j\in J_i}(X_j+X_j^2))^{\star}(2X_i+5X_i^3+1-\sum_{j\in J_i}(X_j+X_j^2)),
\end{equation*}
where $J_i=\{j\mid j\ne i, \max(1,i-5)\le j\le\min(n,i+1)\}$.

$\bullet$ The nc version of the chained singular function
\begin{align*}
    f_{\textrm{cs}}(\x)=&\sum_{i\in J}((X_{i}+10X_{i+1})^{\star}(X_{i}+10X_{i+1})+5(X_{i+2}-X_{i+3})^{\star}(X_{i+2}-X_{i+3})\\&+(X^2_{i+1}-4X_{i+1}X_{i+2}+4X_{i+2}^2)^{\star}(X^2_{i+1}-4X_{i+1}X_{i+2}+4X_{i+2}^2)\\&+10(X^2_{i}-20X_iX_{i+3}+100X_{i+3}^2)^{\star}(X^2_{i}-20X_iX_{i+3}+100X_{i+3}^2),
\end{align*}
where $J=\{1,3,5,\ldots,n-3\}$.

$\bullet$ The nc version of the generalized Rosenbrock function
\begin{equation*}
    f_{\textrm{gR}}(\x)=1+\sum_{i=1}^n(100(X_i-X_{i-1}^2)^{\star}(X_i-X_{i-1}^2)+(1-X_i)^{\star}(1-X_i)).
\end{equation*}

$\bullet$ The nc version of the chained Wood function
\begin{align*}
    f_{\textrm{cW}}(\x)=&1+\sum_{i\in J}(100(X_{i+1}-X_{i}^2)^{\star}(X_{i+1}-X_{i}^2)+(1-X_i)^{\star}(1-X_i)+90(X_{i+3}\\
    &-X_{i+2}^2)^{\star}(X_{i+3}-X_{i+2}^2)+(1-X_{i+2})^{\star}(1-X_{i+2})+10(X_{i+1}+X_{i+3}\\
    &-2)^{\star}(X_{i+1}+X_{i+3}-2)+0.1(X_{i+1}-X_{i+3})^{\star}(X_{i+1}-X_{i+3})),
\end{align*}
where $J=\{1,3,5,\ldots,n-3\}$ and $4|n$.

$\bullet$ The nc version of the Broyden tridiagonal function
\begin{align*}
    f_{\textrm{Bt}}(\x)=&(3X_1-2X_1^2-2X_2+1)^{\star}(3X_1-2X_1^2-2X_2+1)\\
    &+\sum_{i=2}^{n-1}(3X_i-2X_i^2-X_{i-1}-2X_{i+1}+1)^{\star}(3X_i-2X_i^2-X_{i-1}-2X_{i+1}+1)\\
    &+(3X_n-2X_n^2-X_{n-1}+1)^{\star}(3X_n-2X_n^2-X_{n-1}+1).
\end{align*}

To solve the unconstrained eigenvalue minimization problem of these functions, we always rely on the Newton chip method to compute a monomial basis, which turns out to be much smaller than the standard monomial basis. We compute the optimal value $\lambda^{\textrm{ts}}_{1}(f)$ of $(\textrm{EQ}^{\textrm{ts}}_{1})$ 
using approximately minimum chordal extensions and compare the resulting values with the optimal value $\lambda_{\min}(f)$ of $(\textrm{EP})$ corresponding to the dense approach. The results are reported in Table \ref{eigen_Bb}--\ref{eigen_Bt}. It is evident from these tables that our sparse approach is much more scalable than the dense approach. The dense approach can never be executed due to the memory limit when the problem has over $100$ variables while the sparse approach can easily handle problems with $4000$ variables. Meanwhile when the dense approach is executable, the optimal value provided by the sparse approach is quite close (or even equal in many cases) to the one provided by the dense approach.

\begin{table}[htbp]
\caption{The eigenvalue minimization for the nc Broyden banded function}\label{eigen_Bb}
\begin{center}
\begin{tabular}{|c|c|c|c|c|c|c|}
\hline
\multirow{2}*{$n$}&\multicolumn{3}{c|}{sparse}&\multicolumn{3}{c|}{dense}\\
\cline{2-7}
&mb&opt&time&mb&opt&time\\
\hline
$20$&$15$&$0$&$0.34$&$61$&$0$&$1.42$\\
\hline
$40$&$15$&$0$&$0.77$&$121$&$0$&$34.9$\\
\hline
$60$&$15$&$0$&$0.97$&$181$&$0$&$367$\\
\hline
$80$&$15$&$0$&$1.20$&-&-&-\\
\hline
$100$&$15$&$0$&$1.57$&-&-&-\\
\hline
$200$&$15$&$0$&$3.14$&-&-&-\\
\hline
$300$&$15$&$0$&$5.25$&-&-&-\\
\hline
$400$&$15$&$0$&$7.11$&-&-&-\\
\hline
$500$&$15$&$0$&$9.42$&-&-&-\\
\hline
$600$&$15$&$0$&$12.9$&-&-&-\\
\hline
$700$&$15$&$0$&$15.6$&-&-&-\\
\hline
$800$&$15$&$0$&$18.5$&-&-&-\\
\hline
$900$&$15$&$0$&$22.3$&-&-&-\\
\hline
$1000$&$15$&$0$&$26.2$&-&-&-\\
\hline
\end{tabular}
\end{center}
\end{table}

\begin{table}[htbp]
\caption{The eigenvalue minimization for the nc chained singular function}\label{eigen_cs}
\begin{center}
\begin{tabular}{|c|c|c|c|c|c|c|}
\hline
\multirow{2}*{$n$}&\multicolumn{3}{c|}{sparse}&\multicolumn{3}{c|}{dense}\\
\cline{2-7}
&mb&opt&time&mb&opt&time\\
\hline
$20$&$3$&$-0.0004$&$0.06$&$59$&$-0.0001$&$1.65$\\
\hline
$40$&$3$&$-0.0024$&$0.10$&$119$&$-0.0003$&$54.0$\\
\hline
$60$&$3$&$0$&$0.16$&$179$&$-0.0002$&$516$\\
\hline
$80$&$3$&$-0.0005$&$0.19$&-&-&-\\
\hline
$100$&$3$&$0$&$0.20$&-&-&-\\
\hline
$200$&$3$&$-0.0001$&$0.50$&-&-&-\\
\hline
$400$&$3$&$-0.0331$&$0.97$&-&-&-\\
\hline
$600$&$3$&$-0.0005$&$1.85$&-&-&-\\
\hline
$800$&$3$&$-0.0381$&$2.69$&-&-&-\\
\hline
$1000$&$3$&$-0.0074$&$4.10$&-&-&-\\
\hline
$2000$&$3$&$-0.0004$&$15.7$&-&-&-\\
\hline
$3000$&$3$&$-0.0065$&$32.4$&-&-&-\\
\hline
$4000$&$3$&$-0.0007$&$58.7$&-&-&-\\
\hline
\end{tabular}
\end{center}
\end{table}

\begin{table}[htbp]
\caption{The eigenvalue minimization for the nc generalized Rosenbrock function}\label{eigen_gR}
\begin{center}
\begin{tabular}{|c|c|c|c|c|c|c|}
\hline
\multirow{2}*{$n$}&\multicolumn{3}{c|}{sparse}&\multicolumn{3}{c|}{dense}\\
\cline{2-7}
&mb&opt&time&mb&opt&time\\
\hline
$20$&$3$&$1.0000$&$0.06$&$40$&$1.0000$&$0.33$\\
\hline
$40$&$3$&$1.0000$&$0.06$&$80$&$1.0000$&$4.59$\\
\hline
$60$&$3$&$1.0000$&$0.07$&$120$&$1.0000$&$31.9$\\
\hline
$80$&$3$&$1.0000$&$0.08$&$160$&$1.0000$&$151$\\
\hline
$100$&$3$&$1.0000$&$0.08$&$200$&$1.0000$&$557$\\
\hline
$200$&$3$&$0.9999$&$0.15$&-&-&-\\
\hline
$400$&$3$&$0.9999$&$0.45$&-&-&-\\
\hline
$600$&$3$&$0.9999$&$0.70$&-&-&-\\
\hline
$800$&$3$&$0.9999$&$1.03$&-&-&-\\
\hline
$1000$&$3$&$0.9998$&$1.38$&-&-&-\\
\hline
$2000$&$3$&$1.0000$&$4.76$&-&-&-\\
\hline
$3000$&$3$&$1.0000$&$10.7$&-&-&-\\
\hline
$4000$&$3$&$0.9999$&$18.9$&-&-&-\\
\hline
\end{tabular}
\end{center}
\end{table}

\begin{table}[htbp]
\caption{The eigenvalue minimization for the nc chained Wood function}\label{eigen_cW}
\begin{center}
\begin{tabular}{|c|c|c|c|c|c|c|}
\hline
\multirow{2}*{$n$}&\multicolumn{3}{c|}{sparse}&\multicolumn{3}{c|}{dense}\\
\cline{2-7}
&mb&opt&time&mb&opt&time\\
\hline
$20$&$3$&$1.0000$&$0.05$&$31$&$1.0000$&$0.16$\\
\hline
$40$&$3$&$0.9997$&$0.08$&$61$&$1.0000$&$1.14$\\
\hline
$60$&$3$&$0.9992$&$0.09$&$91$&$1.0000$&$7.06$\\
\hline
$80$&$3$&$1.0000$&$0.10$&$121$&$1.0000$&$30.9$\\
\hline
$100$&$3$&$1.0000$&$0.10$&$151$&$1.0000$&$100$\\
\hline
$200$&$3$&$0.9978$&$0.16$&-&-&-\\
\hline
$400$&$3$&$0.9930$&$0.43$&-&-&-\\
\hline
$600$&$3$&$0.9871$&$0.71$&-&-&-\\
\hline
$800$&$3$&$0.9846$&$1.04$&-&-&-\\
\hline
$1000$&$3$&$0.9919$&$1.41$&-&-&-\\
\hline
$2000$&$3$&$0.9605$&$4.95$&-&-&-\\
\hline
$3000$&$3$&$0.9889$&$9.93$&-&-&-\\
\hline
$4000$&$3$&$0.9652$&$18.6$&-&-&-\\
\hline
\end{tabular}
\end{center}
\end{table}

\begin{table}[htbp]
\caption{The eigenvalue minimization for the nc Broyden tridiagonal function}\label{eigen_Bt}
\begin{center}
\begin{tabular}{|c|c|c|c|c|c|c|}
\hline
\multirow{2}*{$n$}&\multicolumn{3}{c|}{sparse}&\multicolumn{3}{c|}{dense}\\
\cline{2-7}
&mb&opt&time&mb&opt&time\\
\hline
$20$&$5$&$0$&$0.07$&$41$&$0$&$0.25$\\
\hline
$40$&$5$&$0$&$0.08$&$81$&$0$&$3.28$\\
\hline
$60$&$5$&$0$&$0.09$&$121$&$0$&$21.6$\\
\hline
$80$&$5$&$0$&$0.13$&$161$&$0$&$117$\\
\hline
$100$&$5$&$0$&$0.15$&$201$&$0$&$335$\\
\hline
$200$&$5$&$0$&$0.29$&-&-&-\\
\hline
$400$&$5$&$0$&$0.66$&-&-&-\\
\hline
$600$&$5$&$0$&$0.97$&-&-&-\\
\hline
$800$&$5$&$0$&$1.56$&-&-&-\\
\hline
$1000$&$5$&$0$&$2.17$&-&-&-\\
\hline
$2000$&$5$&$0$&$7.58$&-&-&-\\
\hline
$3000$&$5$&$0$&$17.0$&-&-&-\\
\hline
$4000$&$5$&$0$&$29.5$&-&-&-\\
\hline
\end{tabular}
\end{center}
\end{table}

Now let us consider the constrained case. Let $\D$ be the semialgebraic set defined by $\{1-X_1^2,\ldots,1-X_n^2,X_1-1/3,\ldots,X_n-1/3\}$ and the optimization problem is minimizing the eigenvalue of the nc Broyden banded function over $\D$.
We compute the optimal value $\lambda^{\textrm{cs-ts}}_{\hat{d}, 1}(f,S)$ of $(\textrm{EQ}^{\textrm{cs-ts}}_{\hat{d},1})$ using approximately minimum chordal extensions with $\hat{d}=3$ (the minimum relaxation order). The results are reported in Table \ref{ceigen_Bb}. 
To show the benefits of our method by contrast with the usual sparse approach based on correlative sparsity, we also display the results for the latter approach (i.e., $(\textrm{EQ}^{\textrm{cs}}_{\hat{d}})$) and the results for the dense approach in the table.
Again one can see from the table that our sparse approach is more scalable than the approach that exploits only correlative sparsity as well as the dense approach. Actually, the last two can never be executed due to the memory limit even when the problem has only $6$ variables.
\begin{table}[htbp]
\caption{The eigenvalue minimization for the nc Broyden banded function over $\D$}\label{ceigen_Bb}
\begin{center}
\begin{tabular}{|c|c|c|c|c|c|c|c|c|c|}
\hline
\multirow{2}*{$n$}&\multicolumn{3}{c|}{CS+TS}&\multicolumn{3}{c|}{CS}&\multicolumn{3}{c|}{dense}\\
\cline{2-10}
&mb&opt&time&mb&opt&time&mb&opt&time\\
\hline
$5$&$11$&$3.113$&$0.50$&$156$&$3.113$&$70.7$&$156$&$3.113$&$69.8$\\
\hline
$10$&$15$&$3.011$&$2.78$&$400$&-&-&-&-&-\\
\hline
$20$&$15$&$9.658$&$11.4$&$400$&-&-&-&-&-\\
\hline
$30$&$15$&$16.30$&$22.3$&$400$&-&-&-&-&-\\
\hline
$40$&$15$&$22.94$&$38.1$&$400$&-&-&-&-&-\\
\hline
$50$&$15$&$29.57$&$57.7$&$400$&-&-&-&-&-\\
\hline
$60$&$15$&$36.21$&$80.5$&$400$&-&-&-&-&-\\
\hline
$70$&$15$&$42.85$&$105$&$400$&-&-&-&-&-\\
\hline
$80$&$15$&$49.49$&$138$&$400$&-&-&-&-&-\\
\hline
$90$&$15$&$56.13$&$151$&$400$&-&-&-&-&-\\
\hline
$100$&$15$&$62.77$&$180$&$400$&-&-&-&-&-\\
\hline
\end{tabular}\\
{\small In this table, ``CS+TS" indicates the results for the approach that exploits combined term-correlative sparsity; ``CS" indicates the results for the approach that exploits only correlative sparsity.}
\end{center}
\end{table}

\subsection*{Randomly generated examples.}
We construct randomly generated examples whose csp graph consists of $l$ maximal cliques of size $15$ as follows: let $f=\sum_{l=1}^p(h_l+h_l^{\star})/2$ where $h_l\in\R\langle X_{10l-9},\ldots,X_{10l+5}\rangle$ is a
random quartic polynomials with $15$ terms and coefficients taken from $[-1,1]$, and let $S=\{g_l\}_{l=1}^p$ where $g_l=1-X_{10l-9}^2-\ldots-X_{10l+5}^2$. We consider the eigenvalue minimization problem for $f$ over the multi-ball $\BB$ defined by $S$. Let $l=50,100,\ldots,400$ so that we obtain $8$ such instances\footnote{The polynomials can be downloaded at https://wangjie212.github.io/jiewang/code.html.}. 
We compute the NCTSSOS hierarchy $(\lambda^{\textrm{cs-ts}}_{\hat{d},k}(f, S))_{k\ge1}$ with $\hat{d}=2$ and report the results of the first three steps (where we use the maximal chordal extension for the first step and use approximate minimum chordal extensions for the second and third steps, respectively) in Table \ref{ceigen_rge}. As one may expect, neither the dense approach nor the approach that exploits only correlative sparsity can handle problems with so large sizes. On the other hand, our sparse approach is scalable up to $4005$ variables.

\begin{table}[htbp]
\caption{The eigenvalue minimization for randomly generated examples over multi-balls}\label{ceigen_rge}
\begin{center}
\begin{tabular}{|c|c|c|c|c|c|c|c|c|c|c|}
\hline
\multirow{2}*{$n$}&\multicolumn{4}{c|}{CS+TS}&\multicolumn{3}{c|}{CS}&\multicolumn{3}{c|}{dense}\\
\cline{2-11}
&$k$&mb&opt&time&mb&opt&time&mb&opt&time\\
\hline
\multirow{3}*{$505$}&$1$&$21$&$-15.91$&$3.26$&\multirow{3}*{$241$}&\multirow{3}*{-}&\multirow{3}*{-}&\multirow{3}*{-}&\multirow{3}*{-}&\multirow{3}*{-}\\
&$2$&$21$&$-15.42$&$7.49$&&&&&&\\
&$3$&$21$&$-15.31$&$10.6$&&&&&&\\
\hline
\multirow{3}*{$1005$}&$1$&$25$&$-32.58$&$9.71$&\multirow{3}*{$241$}&\multirow{3}*{-}&\multirow{3}*{-}&\multirow{3}*{-}&\multirow{3}*{-}&\multirow{3}*{-}\\
&$2$&$25$&$-31.91$&$24.5$&&&&&&\\
&$3$&$25$&$-31.71$&$40.9$&&&&&&\\
\hline
\multirow{3}*{$1505$}&$1$&$26$&$-48.57$&$18.9$&\multirow{3}*{$241$}&\multirow{3}*{-}&\multirow{3}*{-}&\multirow{3}*{-}&\multirow{3}*{-}&\multirow{3}*{-}\\
&$2$&$26$&$-47.00$&$47.0$&&&&&&\\
&$3$&$26$&$-46.71$&$90.0$&&&&&&\\
\hline
\multirow{3}*{$2005$}&$1$&$25$&$-63.58$&$33.7$&\multirow{3}*{$241$}&\multirow{3}*{-}&\multirow{3}*{-}&\multirow{3}*{-}&\multirow{3}*{-}&\multirow{3}*{-}\\
&$2$&$25$&$-62.05$&$85.8$&&&&&&\\
&$3$&$25$&$-61.76$&$149$&&&&&&\\
\hline
\multirow{3}*{$2505$}&$1$&$23$&$-81.07$&$52.9$&\multirow{3}*{$241$}&\multirow{3}*{-}&\multirow{3}*{-}&\multirow{3}*{-}&\multirow{3}*{-}&\multirow{3}*{-}\\
&$2$&$23$&$-78.75$&$134$&&&&&&\\
&$3$&$23$&$-78.21$&$263$&&&&&&\\
\hline
\multirow{3}*{$3005$}&$1$&$23$&$-95.73$&$74.8$&\multirow{3}*{$241$}&\multirow{3}*{-}&\multirow{3}*{-}&\multirow{3}*{-}&\multirow{3}*{-}&\multirow{3}*{-}\\
&$2$&$23$&$-93.13$&$212$&&&&&&\\
&$3$&$23$&$-92.71$&$396$&&&&&&\\
\hline
\multirow{3}*{$3505$}&$1$&$24$&$-111.2$&$93.4$&\multirow{3}*{$241$}&\multirow{3}*{-}&\multirow{3}*{-}&\multirow{3}*{-}&\multirow{3}*{-}&\multirow{3}*{-}\\
&$2$&$24$&$-108.3$&$258$&&&&&&\\
&$3$&$24$&$-107.8$&$531$&&&&&&\\
\hline
\multirow{3}*{$4005$}&$1$&$25$&$-131.1$&$122$&\multirow{3}*{$241$}&\multirow{3}*{-}&\multirow{3}*{-}&\multirow{3}*{-}&\multirow{3}*{-}&\multirow{3}*{-}\\
&$2$&$25$&$-127.5$&$375$&&&&&&\\
&$3$&$25$&$-126.8$&$687$&&&&&&\\
\hline
\end{tabular}\\
\end{center}
{\small In this table, ``CS+TS" indicates the results for the approach that exploits combined term-correlative sparsity; ``CS" indicates the results for the approach that exploits only correlative sparsity.}
\end{table}

\subsection{Trace optimization examples}\label{toe}
Let us consider the unconstrained trace minimization for the nc Broyden banded function and the nc Broyden tridiagonal function. We compute the optimal value $\mu^{\textrm{cs-ts}}_{1}(f)$ of $(\textrm{EQ}^{\textrm{cs-ts}}_{1})$ using approximately minimum chordal extensions. The results are reported in Table \ref{trace_Bb}--\ref{trace_Bt}, respectively. As for eigenvalue minimization, the sparse approach is much more scalable than the dense approach, which actually can never be executed for these examples due to the memory limit.

\begin{table}[htbp]
\caption{The trace minimization for the nc Broyden banded function}\label{trace_Bb}
\begin{center}
\begin{tabular}{|c|c|c|c|c|c|c|}
\hline
\multirow{2}*{$n$}&\multicolumn{3}{c|}{sparse}&\multicolumn{3}{c|}{dense}\\
\cline{2-7}
&mb&opt&time&mb&opt&time\\
\hline
$10$&$29$&$0$&$1.91$&-&-&-\\
\hline
$20$&$29$&$0$&$9.72$&-&-&-\\
\hline
$30$&$29$&$0$&$18.2$&-&-&-\\
\hline
$40$&$29$&$0$&$34.3$&-&-&-\\
\hline
$50$&$29$&$0$&$46.2$&-&-&-\\
\hline
$60$&$29$&$0$&$65.4$&-&-&-\\
\hline
$70$&$29$&$0$&$79.5$&-&-&-\\
\hline
$80$&$29$&$0$&$99.1$&-&-&-\\
\hline
$90$&$29$&$0$&$118$&-&-&-\\
\hline
$100$&$29$&$0$&$150$&-&-&-\\
\hline
\end{tabular}
\end{center}
\end{table}


\begin{table}[htbp]
\caption{The trace minimization for the nc Broyden tridiagonal function}\label{trace_Bt}
\begin{center}
\begin{tabular}{|c|c|c|c|c|c|c|}
\hline
\multirow{2}*{$n$}&\multicolumn{3}{c|}{sparse}&\multicolumn{3}{c|}{dense}\\
\cline{2-7}
&mb&opt&time&mb&opt&time\\
\hline
$20$&$6$&$0$&$0.16$&-&-&-\\
\hline
$40$&$6$&$0$&$0.27$&-&-&-\\
\hline
$60$&$6$&$0$&$0.36$&-&-&-\\
\hline
$80$&$6$&$0$&$0.44$&-&-&-\\
\hline
$100$&$6$&$0$&$0.57$&-&-&-\\
\hline
$200$&$6$&$0$&$1.36$&-&-&-\\
\hline
$400$&$6$&$0$&$3.48$&-&-&-\\
\hline
$600$&$6$&$0$&$7.28$&-&-&-\\
\hline
$800$&$6$&$0$&$10.9$&-&-&-\\
\hline
$1000$&$6$&$0$&$15.4$&-&-&-\\
\hline
$2000$&$6$&$0$&$55.9$&-&-&-\\
\hline
$3000$&$6$&$0$&$122$&-&-&-\\
\hline
$4000$&$6$&$0$&$220$&-&-&-\\
\hline
\end{tabular}
\end{center}
\end{table}


We next consider the trace minimization for the nc Broyden banded function over the semialgebraic set $\D$ defined in Section~\ref{eoe}. 
We compute the optimal value $\mu^{\textrm{cs-ts}}_{\hat{d},1}(f,S)$ of $(\textrm{TQ}^{\textrm{cs-ts}}_{\hat{d},1})$ using approximately minimum chordal extensions and compare with the results for the approach that exploits only correlative sparsity and the results for the dense approach. The minimum relaxation order $\hat{d}=3$ is used. 
The results are reported in Table \ref{ctrace_Bb}, which again demonstrate the scalability of our sparse approach.

\begin{table}[htbp]
\caption{The eigenvalue minimization for the nc Broyden banded function over $\D$}\label{ctrace_Bb}
\begin{center}
\begin{tabular}{|c|c|c|c|c|c|c|c|c|c|}
\hline
\multirow{2}*{$n$}&\multicolumn{3}{c|}{CS+TS}&\multicolumn{3}{c|}{CS}&\multicolumn{3}{c|}{dense}\\
\cline{2-10}
&mb&opt&time&mb&opt&time&mb&opt&time\\
\hline
$5$&$19$&$3.113$&$0.66$&$156$&$3.113$&$7.24$&$156$&$3.113$&$7.10$\\
\hline
$10$&$29$&$3.011$&$5.88$&$400$&-&-&-&-&-\\
\hline
$20$&$29$&$9.833$&$32.9$&$400$&-&-&-&-&-\\
\hline
$30$&$29$&$16.47$&$49.5$&$400$&-&-&-&-&-\\
\hline
$40$&$29$&$23.11$&$73.4$&$400$&-&-&-&-&-\\
\hline
$50$&$29$&$29.75$&$111$&$400$&-&-&-&-&-\\
\hline
$60$&$29$&$36.39$&$151$&$400$&-&-&-&-&-\\
\hline
$70$&$29$&$43.03$&$198$&$400$&-&-&-&-&-\\
\hline
$80$&$29$&$49.67$&$238$&$400$&-&-&-&-&-\\
\hline
$90$&$29$&$56.31$&$298$&$400$&-&-&-&-&-\\
\hline
$100$&$29$&$62.95$&$338$&$400$&-&-&-&-&-\\
\hline
\end{tabular}\\
{\small In this table, ``CS+TS" indicates the results for the approach that exploits combined term-correlative sparsity; ``CS" indicates the results for the approach that exploits only correlative sparsity.}
\end{center}
\end{table}

Finally, we consider the trace minimization for the randomly generated  quartic nc polynomials in Section~\ref{eoe} over the multi-ball $\B$. 
We compute the NCTSSOS hierarchy $(\mu^{\textrm{ts}}_{\hat{d}, k}(f,S))_{k\ge1}$ with the relaxation order $\hat{d}=2$. 
We report the results of the first three steps (where we always use approximate minimum chordal extensions) in Table \ref{ctrace_rge}. 
As one could expect, neither the dense approach nor the approach that exploits only correlative sparsity can handle these problems. On the other hand, our sparse approach is easily scalable up to $4005$ variables.

\begin{table}[htbp]
\caption{The trace minimization for randomly generated examples over multi-balls}\label{ctrace_rge}
\begin{center}
\begin{tabular}{|c|c|c|c|c|c|c|c|c|c|c|}
\hline
\multirow{2}*{$n$}&\multicolumn{4}{c|}{CS+TS}&\multicolumn{3}{c|}{CS}&\multicolumn{3}{c|}{dense}\\
\cline{2-11}
&$k$&mb&opt&time&mb&opt&time&mb&opt&time\\
\hline
\multirow{3}*{$505$}&$1$&$16$&$-4.997$&$4.94$&\multirow{3}*{241}&\multirow{3}*{-}&\multirow{3}*{-}&\multirow{3}*{-}&\multirow{3}*{-}&\multirow{3}*{-}\\
&$2$&$17$&$-4.983$&$7.40$&&&&&&\\
&$3$&$17$&$-4.975$&$7.66$&&&&&&\\
\hline
\multirow{3}*{$1005$}&$1$&$16$&$-10.14$&$14.2$&\multirow{3}*{241}&\multirow{3}*{-}&\multirow{3}*{-}&\multirow{3}*{-}&\multirow{3}*{-}&\multirow{3}*{-}\\
&$2$&$17$&$-10.11$&$21.7$&&&&&&\\
&$3$&$17$&$-10.11$&$22.6$&&&&&&\\
\hline
\multirow{3}*{$1505$}&$1$&$16$&$-15.72$&$25.2$&\multirow{3}*{241}&\multirow{3}*{-}&\multirow{3}*{-}&\multirow{3}*{-}&\multirow{3}*{-}&\multirow{3}*{-}\\
&$2$&$17$&$-15.68$&$39.8$&&&&&&\\
&$3$&$17$&$-15.67$&$41.0$&&&&&&\\
\hline
\multirow{3}*{$2005$}&$1$&$16$&$-20.45$&$40.9$&\multirow{3}*{241}&\multirow{3}*{-}&\multirow{3}*{-}&\multirow{3}*{-}&\multirow{3}*{-}&\multirow{3}*{-}\\
&$2$&$17$&$-20.41$&$67.9$&&&&&&\\
&$3$&$17$&$-20.40$&$73.8$&&&&&&\\
\hline
\multirow{3}*{$2505$}&$1$&$16$&$-25.95$&$63.1$&\multirow{3}*{241}&\multirow{3}*{-}&\multirow{3}*{-}&\multirow{3}*{-}&\multirow{3}*{-}&\multirow{3}*{-}\\
&$2$&$17$&$-25.90$&$95.6$&&&&&&\\
&$3$&$18$&$-25.89$&$101$&&&&&&\\
\hline
\multirow{3}*{$3005$}&$1$&$16$&$-31.09$&$93.5$&\multirow{3}*{241}&\multirow{3}*{-}&\multirow{3}*{-}&\multirow{3}*{-}&\multirow{3}*{-}&\multirow{3}*{-}\\
&$2$&$17$&$-31.03$&$152$&&&&&&\\
&$3$&$18$&$-31.02$&$157$&&&&&&\\
\hline
\multirow{3}*{$3505$}&$1$&$16$&$-35.99$&$119$&\multirow{3}*{241}&\multirow{3}*{-}&\multirow{3}*{-}&\multirow{3}*{-}&\multirow{3}*{-}&\multirow{3}*{-}\\
&$2$&$17$&$-35.93$&$198$&&&&&&\\
&$3$&$18$&$-35.92$&$216$&&&&&&\\
\hline
\multirow{3}*{$4005$}&$1$&$16$&$-41.80$&$145$&\multirow{3}*{241}&\multirow{3}*{-}&\multirow{3}*{-}&\multirow{3}*{-}&\multirow{3}*{-}&\multirow{3}*{-}\\
&$2$&$17$&$-41.72$&$248$&&&&&&\\
&$3$&$18$&$-41.70$&$264$&&&&&&\\
\hline
\end{tabular}\\
{\small In this table, ``CS+TS" indicates the results for the approach that exploits combined term-correlative sparsity; ``CS" indicates the results for the approach that exploits only correlative sparsity.}
\end{center}
\end{table}

\section{Conclusions and Outlooks}\label{cons}
We have presented the sparsity (term sparsity and combined correlative-term sparsity) adapted moment-SOHS hierarchies for both eigenvalue optimization and trace optimization involving noncommutative polynomials. Numerical experiments demonstrate that these sparse hierarchies are very efficient and scale well with the problem size when sparsity is present. One question left for future investigation is to develop a Gelfand-Naimark-Segal's style construction for extracting a minimizer adapted to our sparse setting.

Recently a moment-SOHS hierarchy for optimization problems involving trace polynomials was proposed in \cite{klep2020}. It would be worth extending further our sparsity-exploiting framework to handle trace polynomials.

We also plan to use the sparsity adapted moment-SOHS hierarchies developed in this paper to tackle large-scale NCPOPs arising from quantum information and condensed matter physics.

~

\paragraph{\textbf{Acknowledgements}.} 
Both authors were supported by the Tremplin ERC Stg Grant ANR-18-ERC2-0004-01 (T-COPS project).
The second author was supported by the FMJH Program PGMO (EPICS project) and  EDF, Thales, Orange et Criteo.
This work has benefited from  the European Union's Horizon 2020 research and innovation programme under the Marie Sklodowska-Curie Actions, grant agreement 813211 (POEMA) as well as from the AI Interdisciplinary Institute ANITI funding, through the French ``Investing for the Future PIA3'' program under the Grant agreement n$^{\circ}$ANR-19-PI3A-0004.

\bibliographystyle{plain}

\end{document}